\DeclareSymbolFont{cyrillic}{T2A}{cmr}{m}{n}
\DeclareMathSymbol{\Sha}{\mathalpha}{cyrillic}{216}
\numberwithin{equation}{section}
\newtheorem{dummy}{dummy}[section]
\newtheorem{theorem}[dummy]{Theorem}
\newtheorem{corollary}[dummy]{Corollary}
\newtheorem{lemma}[dummy]{Lemma}
\newtheorem{proposition}[dummy]{Proposition}
\theoremstyle{definition}
\newtheorem{definition}[dummy]{Definition}
\newtheorem{remark}[dummy]{Remark}
\newtheorem{example}[dummy]{Example}
\newtheorem{question}[dummy]{Question}
\theoremstyle{plain}
\newtheorem{conjecture}[dummy]{Conjecture}
\newcommand{\Z}{\ensuremath{\mathbb{Z}}}
\newcommand{\Q}{\ensuremath{\mathbb{Q}}}
\newcommand{\CC}{\ensuremath{\mathbb{C}}}
\newcommand{\PP}{\ensuremath{\mathbb{P}}}
\newcommand{\LL}{\ensuremath{\mathbb{L}}}
\newcommand{\FM}{\ensuremath{\operatorname{FM}}}
\newcommand{\sgn}{\ensuremath{\operatorname{sgn}}}
\newcommand{\bigo}{\ensuremath{\mathcal{O}}}
\newcommand{\klat}{\ensuremath{\Lambda_{\operatorname{K3}}}}
\newcommand{\crs}{\ensuremath{\operatorname{crs}}}
\newcommand{\cublat}{\ensuremath{\Lambda_{\text{cub}}^0}}
\newcommand{\fullcublat}{\ensuremath{\Lambda_{\text{cub}}}}
\def\Db{\calD^{b}}
\newcommand{\gravk}{\ensuremath{K_0(\operatorname{Var}_k)}}
\newcommand{\gravc}{\ensuremath{K_0(\operatorname{Var}_\CC)}}
\newcommand{\gghs}{\ensuremath{K_0^{\oplus}(\operatorname{HS}_{\Z,2})}}
\newcommand{\gghsf}{\ensuremath{K_0^{\oplus}(\operatorname{HS}_{\Z})}}
\newcommand{\gghsn}{\ensuremath{K_0^{\oplus}(\operatorname{HS}_{\Z,n})}}
\def\disc{\operatorname{disc}}
\def\Kn{{\ensuremath{\operatorname{K3}^{[n]}}}}
\def\dv{\operatorname{div}}
\DeclareMathOperator{\Aut}{Aut}
\DeclareMathOperator{\rk}{rk}
\DeclareMathOperator{\GL}{GL}
\DeclareMathOperator{\Hom}{Hom}
\DeclareMathOperator{\End}{End}
\DeclareMathOperator{\id}{id}
\DeclareMathOperator{\Emb}{Emb}
\DeclareMathOperator{\Spec}{Spec}
\DeclareMathOperator{\SHom}{\mathcal{H\kern -1pt}\textit{om}} %For the local (sheaf) Hom -- kinda useless, but looks slightly better
\DeclareMathOperator{\SEnd}{\mathcal{E\kern -1pt}\textit{nd}} %For the local (sheaf) End -- kinda useless, but looks slightly better
\DeclareMathOperator{\SExt}{\mathcal{E\kern -1pt}\textit{xt}} %For the local (sheaf) Ext -- kinda useless, but looks slightly better
\DeclareMathOperator{\NS}{NS}
\def\dar[#1]{\ar@<2pt>[#1]\ar@<-2pt>[#1]}
\newcommand\bbC{\mathbb{C}}
\newcommand\bbQ{\mathbb{Q}}
\newcommand\bbZ{\mathbb{Z}}
\newcommand\calA{\mathcal{A}}
\newcommand\calC{\mathcal{C}}
\newcommand\calD{\mathcal{D}}
\newcommand\calX{\mathcal{X}}
\newcommand\calY{\mathcal{Y}}
\newcommand{\function}[5]{\begin{array}{lrcl} 
#1: & #2 & \longrightarrow & #3 \\
    & #4 & \longmapsto & #5 \end{array}}
\newcommand{\isomorphism}[5]{\begin{array}{lrcl} 
#1: & #2 & \overset{\sim}{\longrightarrow} & #3 \\
    & #4 & \longmapsto & #5 \end{array}}
\newcommand{\functionstar}[4]{
\begin{array}{rcl} #1 &\longrightarrow &#2 \\ #3&\longmapsto &#4 \end{array}
}
\newcommand{\prim}{\operatorname{prim}}
\title{L-equivalence and Fourier--Mukai partners of cubic fourfolds}
\author[R.~Meinsma]{Reinder Meinsma}
\address{
Fakultät für Mathematik und Informatik,
Universität des Saarlandes,
Campus E2.4, 66123 Saarbrücken, Germany
}
\email{meinsma@math.uni-sb.de}
\author[R. Moschetti]{Riccardo Moschetti}
\address{Department of Mathematics F. Casorati, University of Pavia, via Ferrata 5, 27100 Pavia, Italy} 
\email{riccardo.moschetti@unipv.it}
\begin{document}
\begin{abstract}
We study $\LL$-equivalence in the Grothendieck ring of varieties and its interaction with categorical invariants of cubic fourfolds. Assuming a Derived Torelli-type criterion for Kuznetsov components and a mild condition on the discriminant of the transcendental lattice, we prove a counting formula for Fourier--Mukai partners of such cubic fourfolds. 
As an application, we exhibit cubic fourfolds with a fixed algebraic lattice admitting a unique non-trivial Fourier--Mukai partner, which is trivially $\LL$-equivalent to the original. Finally, we show that $\LL$-equivalence classes of cubic fourfolds are finite.
\end{abstract}
\maketitle
\setcounter{tocdepth}{1}
\tableofcontents
\section{Introduction}
The Grothendieck ring of varieties over a field $k$ is the quotient of the free abelian group generated by isomorphism classes of varieties over $k$, by the subgroup generated by expressions of the form 
\[
[X] - [Z] = [X\setminus Z],
\]
where $Z\subset X$ is a closed subvariety. Multiplication in $\gravk$ is defined by the fibre product over $k$:
\[
[X]\cdot [Y] = [X\times_kY].
\]
The class $\LL\coloneqq [\mathbb{A}^1]\in \gravk$ is called the \textit{Lefschetz motive}. The Lefschetz motive is a zero-divisor in $\gravc$, as was proved by Borisov \cite{Bor17}. Following this observation, Kuznetsov and Shinder defined $\LL$-equivalence:
\begin{definition}[\cite{KS18}]
    Two varieties $X,Y$ over a field $k$ are \textit{$\LL$-equivalent}, denoted $X\sim_\LL Y$, if there exists a $n\geq 0$ such that 
    \[
    \LL^n([X]-[Y]) = 0 \in \gravk.
    \]
\end{definition}

$\LL$-equivalence seems to be closely related to derived equivalence, especially for projective hyperk\"ahler manifolds. At the time of writing, many examples of $\LL$-equivalent varieties have been constructed. In the case of K3 surfaces, see \cite{KS18, HL18, IMOU20, KKM20} and \cite{SZ20}, also related with $\LL$-equivalent degree five elliptic curves. In higher dimensions, $\LL$-equivalent Calabi-Yau varieties are considered in \cite{Bor17, Martin2016, OR2018, IMOU2019, KR2019, Man2019, BCP2020}. Examples of $\LL$-equivalence for hyperk\"ahler manifolds appear in \cite{Oka21, BB2025}, for Gushel-Mukai surfaces in \cite{PMS2025}, for cubic fourfolds in \cite{FL23}.

All currently known examples of $\LL$-equivalent hyperk\"ahler manifolds are also derived equivalent. In fact, for very general K3 surfaces, $\LL$-equivalence implies derived equivalence \cite{Mei24}. On the other hand, there exist examples of derived equivalent hyperk\"ahler manifolds of \Kn-type that are not $\LL$-equivalent, for any $n\geq 2$ \cite{Mei25}. Notably, there are no known examples of K3 surfaces that are derived equivalent but not $\LL$-equivalent.
\begin{conjecture} \cite{Mei25} \label{conj: L implies D for hyperkahler}
    If $X$ and $Y$ are $\LL$-equivalent projective hyperk\"ahler manifolds, then they are derived equivalent.
\end{conjecture}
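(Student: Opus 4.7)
The plan is to exploit the only currently known bridge from $\LL$-equivalence to derived equivalence, namely a motivic realization to an additive Grothendieck ring of integral Hodge structures. First I would apply the Hodge realization, under which $\LL$ is sent to the class of the Tate structure $\Z(-1)$. Since tensoring with $\Z(-1)$ is an autoequivalence of the category of integral Hodge structures, this class is a non-zero-divisor, and the hypothesis $\LL^n([X]-[Y])=0$ descends to an equality of cohomology classes $\bigoplus_i [H^i(X,\Z)] = \bigoplus_i [H^i(Y,\Z)]$ in $\gghsf$. Matching weight by weight, and invoking a Krull--Schmidt property for integral Hodge structures, one extracts isomorphisms $H^i(X,\Z) \cong H^i(Y,\Z)$ of pure Hodge structures for every $i$.

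Specialising to the hyperk\"ahler setting, the isomorphism $H^2(X,\Z) \cong H^2(Y,\Z)$ of pure weight-$2$ Hodge structures is the precise geometric input needed for any Derived Torelli-type theorem --- exactly the kind of assumption invoked in the main theorems of the present paper. For K3 surfaces, Mukai--Orlov converts a Hodge isometry of the Mukai lattice into a derived equivalence; analogous statements, conjectural in general but established in special cases, exist for hyperk\"ahler manifolds of $\Ktwo$-type and other deformation classes following Ploog, Beckmann, Markman, and Huybrechts. Granting such a theorem, a Hodge isometry of the (transcendental) Mukai lattice translates into a Fourier--Mukai equivalence.

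The hard part --- and the reason the statement remains conjectural --- lies between these two steps. The realization to $\gghsf$ records only the underlying Hodge class and forgets both the Beauville--Bogomolov--Fujiki form on $H^2$ and the Mukai pairing on the extended lattice $\mmlatx$, whereas Derived Torelli demands an \emph{isometry} with respect to these pairings. A natural route around this obstacle is to lift the cancellation $\LL^n([X]-[Y])=0$ to the category of integral Chow motives, since an isomorphism $h(X)\cong h(Y)$ realised by an algebraic correspondence automatically preserves intersection products, and hence the BBF form. For very general K3 surfaces this is essentially the strategy behind \cite{Mei24}; for $\Ktwo$-type manifolds, Voisin's multiplicative Chow--K\"unneth decomposition provides a plausible starting point. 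Producing such a Chow-motivic lift, together with a Derived Torelli theorem for the OG6, OG10, and Kummer deformation types, are the two decisive obstacles one would have to overcome.
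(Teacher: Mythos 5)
The statement you are asked about is Conjecture \ref{conj: L implies D for hyperkahler}: the paper does not prove it, and neither do you. Your text is a strategy sketch that correctly identifies the two decisive open problems (upgrading a Hodge isomorphism to a Hodge \emph{isometry}, and a Derived Torelli theorem for all hyperk\"ahler deformation types), and you say so explicitly. So the honest verdict is that there is a genuine gap --- indeed the gap that makes this a conjecture rather than a theorem --- and your proposal should not be mistaken for a proof.

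Two more specific points. First, your intermediate step already overclaims: from $[H^\bullet(X,\Z)]=[H^\bullet(Y,\Z)]$ in $\gghsf$ you cannot ``invoke a Krull--Schmidt property'' to extract isomorphisms $H^i(X,\Z)\cong H^i(Y,\Z)$; the category of polarisable integral Hodge structures is not known to satisfy cancellation in this generality. The arguments of \cite{Efi18} and \cite{Mei24}, which this paper follows, only produce an isomorphism of the transcendental part under the extra hypothesis $\End_{\mathrm{Hodge}}(T(X))\simeq\Z$ (and $\rk T(X)\neq 4$), which is exactly why the partial results here (Theorem \ref{thm: L equivalence implies Hodge isometry}, Theorem \ref{thm: L implies FM}) carry those hypotheses. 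Second, where you propose lifting to integral Chow motives to recover the Beauville--Bogomolov form, the paper's actual mechanism for upgrading the Hodge isomorphism to an isometry is different and more elementary: the gluing-group homomorphism of Lemma \ref{lem: gluing group homomorphism} shows that $\LL$-equivalence forces $\disc(T(X))=\disc(T(Y))$, and a lattice-theoretic argument from \cite{Mei24} then yields the isometry. That device is what makes the conditional implications of this paper work; your Chow-motivic route is a plausible but entirely unestablished alternative. In short: your diagnosis of why the conjecture is open is accurate and consistent with the paper, but no proof is given on either side.
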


In this paper, we study $\LL$-equivalence and Fourier--Mukai partnership for cubic fourfolds over the complex numbers. 
For a cubic fourfold $X$, there exists a canonical semiorthogonal decomposition 
\[
\Db(X) \simeq \langle \mathcal{A}_X, \bigo_X, \bigo_X(1), \bigo_X(2) \rangle.
\]
The component $\mathcal{A}_X$ is called the \textit{Kuznetsov component} of $\Db(X)$. Since cubic fourfolds are Fano varieties (in the sense that their anti-canonical bundles are ample), 
it follows from Bondal and Orlov's Reconstruction Theorem \cite{BO01} that any Fourier--Mukai equivalence $\Db(X)\simeq \Db(Y)$ between the derived categories of cubic fourfolds induces an isomorphism $X\simeq Y$. However, there are many examples of non-isomorphic cubic fourfolds $X$, $Y$ whose derived categories have equivalent Kuznetsov components $\mathcal{A}_X\simeq \mathcal{A}_Y$. Such cubic fourfolds are called \textit{Fourier--Mukai partners}.

We therefore propose the following conjecture related to Conjecture \ref{conj: L implies D for hyperkahler}. This conjecture has also recently appeared in \cite{BB2025}.
\begin{conjecture} \label{conj:LimpliesFM}
    If $X$ and $Y$ are $\LL$-equivalent cubic fourfolds, then they are Fourier--Mukai partners:
    \[
    X\sim_\LL Y \implies \mathcal{A}_X\simeq \mathcal{A}_Y
    \]
\end{conjecture}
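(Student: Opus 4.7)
The plan is to translate $\LL$-equivalence into a Hodge-theoretic statement about the Kuznetsov components and then invoke the Derived Torelli criterion that is assumed throughout the paper. Concretely, if $X \sim_\LL Y$ then $\LL^n([X]-[Y])=0$ in $\gravc$ for some $n\ge 0$. The first move is to apply a suitable realization map $\gravc \to \gghsf$ into the additive Grothendieck ring of integral polarized Hodge structures, which retains Hodge-isometry information. Since the Lefschetz motive realizes to the Tate twist $\Z(-1)$, and Tate-twisting is injective on the relevant direct-summand monoid, this should upgrade $\LL$-equivalence to an isomorphism of polarized Hodge structures $H^*(X,\Z) \simeq H^*(Y,\Z)$. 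For cubic fourfolds the only interesting Hodge-theoretic content sits in the primitive middle cohomology, so the core output is a Hodge isometry of transcendental lattices $T(X)\simeq T(Y)$.

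The second step is to promote this isometry to one of Mukai lattices of Kuznetsov components. By Addington--Thomas, $T(X)$ embeds as the transcendental part of $\wt H(\calA_X,\Z)$, with orthogonal complement the algebraic sublattice $\wt H(\calA_X,\Z)_{\mathrm{alg}}$, and symmetrically for $Y$. Extending the Hodge isometry $T(X)\simeq T(Y)$ to a Hodge isometry $\wt H(\calA_X,\Z)\simeq \wt H(\calA_Y,\Z)$ is a lattice-theoretic gluing problem governed by Nikulin's theory; the ``mild discriminant condition'' mentioned in the abstract is of exactly the type that forces the ambient overlattice class to be unique, so that the isometry extends. Once the extension exists, the assumed Derived Torelli criterion for Kuznetsov components turns it into a Fourier--Mukai equivalence $\calA_X \simeq \calA_Y$, exhibiting $X$ and $Y$ as Fourier--Mukai partners.

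The principal obstacle is the first step: establishing that enough of $\LL$-equivalence survives passage to a Hodge-theoretic invariant. Equivalently, one needs a realization map whose target is $\LL$-torsion-free on the summand carrying the transcendental cohomology. For K3 surfaces this is the content of \cite{Mei24} and uses the very-general hypothesis; the analogous statement for hyperk\"ahler manifolds is the motivation for Conjecture~\ref{conj: L implies D for hyperkahler}, and for cubic fourfolds one would expect to deduce it from a parallel statement about the associated hyperk\"ahler moduli space (the Fano variety of lines, or an LLSvS eightfold) via a comparison between $\calA_X$ and that hyperk\"ahler. A secondary obstacle is the lattice-extension step, which can fail when the discriminant hypothesis is violated; the counting formula announced in the abstract likely quantifies exactly this ambiguity, which strongly suggests that Conjecture~\ref{conj:LimpliesFM} is best attacked one discriminant stratum at a time, with the clean statement available only under the mild hypothesis on $\disc T(X)$.
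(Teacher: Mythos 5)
The statement you are addressing is a conjecture; the paper does not prove it unconditionally, and neither do you. What the paper does prove is the conditional version (Theorem \ref{thm: L implies FM}): under Assumption \ref{as: Derived Torelli for cubic fourfolds}, together with $\rk(T(X))\neq 4$ and $\End_{\mathrm{Hodge}}(T(X))\simeq \Z$, $\LL$-equivalence implies $\mathcal{A}_X\simeq\mathcal{A}_Y$. Your overall strategy --- Hodge realisation, then a Hodge isometry $T(X)\simeq T(Y)$, then Derived Torelli --- is the same as the paper's, and you correctly identify that the genuinely open part is extracting Hodge-theoretic information from $\LL$-equivalence. However, there are two concrete inaccuracies in how you fill in the conditional argument.

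First, you skip the step that is the actual technical core. The realisation map only gives an equality of classes $[H^4(X,\Z)]=[H^4(Y,\Z)]$ in the \emph{additive} Grothendieck group $\gghsfour$; under $\End_{\mathrm{Hodge}}(T(X))\simeq\Z$ this yields (via Efimov) an isomorphism of Hodge structures $T(X)(1)\simeq T(Y)(1)$, but \emph{not} an isometry --- the bilinear form is not remembered by the class in $K_0^\oplus$. The paper recovers the form by showing that the order of the gluing group $G(H)$ is an invariant of the class in $\gghs$ (Lemma \ref{lem: gluing group homomorphism}) and equals $\disc(T(X))$ for cubic fourfolds (Lemma \ref{lem: L equivalent cubic fourfolds have equal discriminants}), whence $\disc(T(X))=\disc(T(Y))$, and then invokes \cite[Corollary 3.10]{Mei24} (this is where $\rk(T(X))\neq 4$ enters) to upgrade the Hodge isomorphism to a Hodge isometry. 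Your write-up passes directly from the realisation to ``a Hodge isometry of transcendental lattices'' without this argument, which is a genuine gap. Second, you misattribute the hypotheses: the ``mild condition on the discriminant'' ($3\nmid\disc(T(X))$) is used only for the counting formula (Theorem \ref{thm: counting formula}), not for the $\LL\Rightarrow$FM implication, and your proposed Nikulin-gluing extension of $T(X)\simeq T(Y)$ to the Mukai lattices $\tilde H(\mathcal{A}_X,\Z)\simeq\tilde H(\mathcal{A}_Y,\Z)$ is unnecessary in the paper's setup, because Assumption \ref{as: Derived Torelli for cubic fourfolds} is formulated directly at the level of $T(X)\simeq T(Y)$. (The extension problem you describe is relevant only to verifying that Assumption in specific cases, cf.\ Proposition \ref{prop: derived torelli criteria}.)
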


\begin{remark}
Conjecture \ref{conj:LimpliesFM} should also follow from Kontsevich’s expectation that semiorthogonal decompositions are canonical (up to mutation) and functorial with respect to geometric operations. See, for further references \cite{ESS2025, HLNotes2024}.
\end{remark}

We note that the $\LL$-equivalence relation in the context of cubic fourfolds was first studied by Galkin and Shinder \cite{GS14}. In that paper, the so-called \textit{rational defect} $\mathcal{M}_X$ of a cubic fourfold $X$ plays a crucial role. The rational defect is defined to be 
\[
\mathcal{M}_X \coloneqq \frac{[X] - [\PP^4]}{\LL}\in \gravc[\LL^{-1}].
\]
If $X$ is rational, the equation $[X] = [\PP^4] + \mathcal{M}_X\cdot\LL$ holds in the (non-localised) Grothendieck ring \gravc, and $\mathcal{M}_X$ is a sum of smooth projective surfaces \cite[Lemma 2.1]{GS14}. In fact, in all of the cases where we can compute $\mathcal{M}_X$ for rational cubic fourfolds, we have the equation \cite{KS18,IMOU20}
\begin{equation}\label{eq: motive of certain rational cubic fourfolds}
    [X] = 1 + \LL^2 + \LL^4 + [S]\LL,
\end{equation}
where $S$ is a K3 surface. 

In this paper, we construct pairs of $\LL$-equivalent cubic fourfolds $X$, $Y$ whose classes in the Grothendieck ring are of the form \eqref{eq: motive of certain rational cubic fourfolds}: 

\begin{proposition}[See Proposition \ref{prop: L10 examples} and Proposition \ref{prop: L29 examples}] \label{prop: L10 examples intro}
    Suppose $X$ is a cubic fourfold with one of the following two properties:
    \begin{enumerate}
    \item[A)] The primitive part $A(X)_\mathrm{prim}$ of the algebraic lattice is isometric to 
    \[
    L_{2,9} \coloneqq 
    \left(
        \begin{matrix}
            14 & 2 \\ 2 & 18
        \end{matrix}
    \right).
    \]
    \item[B)] The algebraic lattice $A(X) \subset H^4(X,\Z)$ is isometric to the lattice 
    \[
    L_{10} \coloneqq 
    \left(
        \begin{matrix}
            3 & 1 & 1 \\
            1 & 3 & 0 \\
            1 & 0 & 10
        \end{matrix}
    \right).
    \]
    \end{enumerate}
    Moreover, suppose that $X$ is very general with one of these properties. Then $X$ has a non-trivial Fourier--Mukai partner $Y$ for which there exists a unique K3 surface $S$ with the property that 
    \[
    [X] = [Y] = 1 + \LL^2 + \LL^4 + [S]\LL \in \gravc.
    \]
    Finally, if $X$ satisfies Property B, then $Y$ is the unique non-trivial Fourier--Mukai partner of $X$.
\end{proposition}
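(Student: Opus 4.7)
The approach combines three ingredients: the Fourier--Mukai partner counting formula established earlier in the paper (built on the derived Torelli-type criterion), known rationality constructions for cubic fourfolds in specific Hassett divisors, and the canonical identification of the K3 surface $S$ through the Kuznetsov component.

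\textbf{Step 1: Count Fourier--Mukai partners.} I would apply the counting formula to the two lattice configurations. For Case B, the lattice $L_{10}$ has discriminant $77 = 7 \cdot 11$, so the discriminant group and the relevant orbits of stable isometries on isotropic vectors of the extended Mukai-type lattice $\mmlatx$ can be enumerated by hand. The output should be exactly two orbits, giving a unique non-trivial Fourier--Mukai partner $Y$ and settling both existence and uniqueness of $Y$ in Case B. For Case A, the same formula applied to the primitive part $L_{2,9}$ (discriminant $248 = 8 \cdot 31$) produces at least one non-trivial partner.

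\textbf{Step 2: Establish the motivic identity via a rationality construction.} For each case I would identify a rank-$2$ saturated sublattice $\langle h^2, w\rangle \subset A(X)$ of some discriminant $d$ placing $X$ into a Hassett divisor $\mathcal{C}_d$ for which an explicit rationality construction (Russo--Stagliano, IMOU, or Fano-of-lines type) yields an identity of the form \eqref{eq: motive of certain rational cubic fourfolds}. For Case A, the norm-$14$ generator of $L_{2,9}$ orthogonal to $h^2$ gives $\langle h^2, v\rangle$ of discriminant $42$, placing $X \in \mathcal{C}_{42}$. For Case B, the basis vector $e_3$ of norm $10$ yields the sublattice $\langle h^2, e_3\rangle$ of discriminant $29$, placing $X \in \mathcal{C}_{29}$, where the motivic decomposition is produced by the construction recalled in the paper. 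In each case the K3 surface $S$ is canonically identified as the one satisfying $\mathcal{A}_X \simeq \Db(S)$ under the assumed derived Torelli criterion.

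\textbf{Step 3: Transfer to $Y$ and conclude uniqueness.} Since $\mathcal{A}_Y \simeq \mathcal{A}_X \simeq \Db(S)$ and the algebraic lattice is preserved under the derived equivalence, $Y$ lies in the same Hassett divisors as $X$ and admits the same motivic construction, yielding $[Y] = 1 + \LL^2 + \LL^4 + [S]\LL$ with the \emph{same} K3 surface $S$. Uniqueness of $S$ is then a genericity argument: if $[S'] = [S]$ in $\gravc$ for another K3 $S'$, then their transcendental Hodge structures agree, and very general K3 surfaces with a prescribed Hodge structure of this rank are determined up to isomorphism (using \cite{Mei24} or an ad hoc Torelli argument).

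The main obstacle is Step 2: although each lattice puts $X$ into several Hassett divisors, one must select a $d$ for which the literature supplies a motivic identity of the precise shape \eqref{eq: motive of certain rational cubic fourfolds} and verify the hypotheses of that construction hold for a very general cubic with the prescribed algebraic lattice. The compatibility of the K3 surface produced by the geometric construction with the one produced by the derived Torelli criterion must also be checked, so that the statement about \emph{uniqueness} of $S$ is meaningful and transfers from $X$ to $Y$.
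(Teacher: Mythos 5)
Your overall architecture (count partners with the counting formula, then feed both $X$ and $Y$ into a known motivic construction, then identify the two K3 surfaces) is the same as the paper's, but Step 2 — which you yourself flag as the main obstacle — is where the argument actually breaks. For Case B the divisor $\mathcal{C}_{29}$ does not exist: Hassett divisors are nonempty only for $d\equiv 0,2\pmod 6$, and in any case the paper points out that for $n$ even the hyperplane class is the \emph{second} basis vector of $L_n$, so $\langle h^2,e_3\rangle$ has discriminant $30$, not $29$. The route the paper takes is entirely different: $L_{10}$ contains the sublattice $\left(\begin{smallmatrix}3&1\\1&3\end{smallmatrix}\right)$, so $X$ contains a plane $P$, and the third basis vector gives a cycle $T$ with $(h^2-P)\cdot T$ odd — this is exactly Property B of Proposition \ref{prop: motives of certain cubic fourfolds} (the Kuznetsov--Shinder construction), which is what supplies the identity $[X]=1+\LL^2+\LL^4+[S]\LL$ together with the Hodge isometry $T(S)(-1)\simeq T(X)$. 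For Case A the paper invokes the Pfaffian/$\mathcal{C}_{14}$ identity (Property A), not a $\mathcal{C}_{42}$ construction; no motivic identity of the required shape is available for $\mathcal{C}_{42}$ in the sources the paper relies on. Without landing in one of the two cases of Proposition \ref{prop: motives of certain cubic fourfolds}, you have no motivic decomposition to work with.

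Two further steps are asserted but false as stated. First, ``the algebraic lattice is preserved under the derived equivalence'' is not true: Fourier--Mukai partners of cubic fourfolds need only have $A(Y)_{\mathrm{prim}}$ in the same \emph{genus} as $A(X)_{\mathrm{prim}}$ (see Remark \ref{rem: the other FM partners for L29}, where partners with genuinely different primitive algebraic lattices occur). This is why the paper must (i) check, for $L_{10}$, that the second lattice in the genus of $A(X)_{\mathrm{prim}}$ is excluded from $\mathcal{H}(N)$ by a square-$6$, divisibility-$3$ vector, and (ii) run a separate genus analysis on the full lattice $A(Y)$ (the genus of $L_{10}$ has five members, and $L_{10}'$ is only ruled out because its unique square-$3$ vector has a non-even orthogonal complement). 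Only after this can one say $Y$ satisfies the same property and hence admits the same motivic decomposition. Second, the uniqueness of $S$ is not a genericity/Torelli triviality: a Picard rank $2$ K3 surface with a prescribed transcendental Hodge structure can perfectly well have non-isomorphic Fourier--Mukai partners. The paper computes $\NS(S)$ explicitly in each case and applies the Hosono--Lian--Oguiso--Yau counting formula to show the partner count is $1$, which is what forces $S\simeq S'$ and hence $[X]=[Y]$. These computations are not optional refinements; they are the content of the proof.
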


The $\LL$-equivalent cubic fourfolds in Proposition \ref{prop: L10 examples intro} are \textit{trivially} $\LL$-equivalent, in the sense that the motives $[X]$ and $[Y]$ are \textit{equal} in $\gravc$. Other examples of trivially $\LL$-equivalent cubic fourfolds can be found in \cite{FL23}.
\begin{question}
    Do there exist non-trivially $\LL$-equivalent cubic fourfolds?
\end{question}

We prove Proposition \ref{prop: L10 examples intro} by constructing a Counting Formula for Fourier--Mukai partners of certain cubic fourfolds.
Specifically, we will work with cubic fourfolds satisfying the following Definition.
\begin{definition} \label{def: derived torelli assumption}
    A cubic fourfold $X$ is said to \textit{satisfy Derived Torelli} if it has the property that for any cubic fourfold $Y$, the following two statements are equivalent: 
    \begin{enumerate}
        \item
        $X$ and $Y$ are Fourier--Mukai partners, that is, we have an equivalence $\mathcal{A}_X\simeq \mathcal{A}_Y$.
        \item There is a Hodge isometry between transcendental lattices $T(X)\simeq T(Y)$.
    \end{enumerate}
\end{definition}
Note that $(1) \implies (2)$ always holds, see Remark \ref{rem:1 implies 2 in the assumption}.
There is a large class of cubic fourfolds which satisfy Derived Torelli, for instance if $X$ is a general cubic fourfold not contained in a Hassett divisor (see also Proposition \ref{prop: derived torelli criteria} and Remark \ref{cases in which the assumption doesn't hold}).

Inspired by the works \cite{HLOY02,Ma10,BBM25}, we prove the following:
\begin{theorem}[Theorem \ref{thm: counting formula}] \label{thm: counting formula INTRO}
    Let $X$ be a cubic fourfold which satisfies Derived Torelli. Assume moreover that $\disc(T(X))$ is not divisible by $3$. Then we have 
    \begin{equation*}
    \#\FM(X) = \sum_{N'\in \mathcal{H}(N)}|O(N')\backslash O(A_{T(X)})/O_{\mathrm{Hodge}}(T(X))|.
    \end{equation*}
    Here, $N = A(X)_{\text{prim}}$ and the action of $O(N')$ on $O(A_{T(X)})$ is the one defined by Remark \ref{rem: isometry actions}, and $\mathcal{H}(N)$ is the set of lattices in the genus of $N$ which contain no vectors of square $2$, nor any vectors of square $6$ and divisibility $3$. 
\end{theorem}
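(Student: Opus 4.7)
The plan is to follow the strategy developed in the K3 and hyperkähler settings by Hosono–Lian–Oguiso–Yau, Ma, and Brakkee–Baldi–Mauri, adapting it to cubic fourfolds via the Derived Torelli Assumption. Set $L = \fullcublat^0$ (or whichever notation the paper uses for the primitive middle cohomology lattice of a cubic fourfold) and $T = T(X)$, $N = A(X)_{\prim}$.

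First, by Assumption \ref{as: Derived Torelli for cubic fourfolds INTRO}, $\#\FM(X)$ equals the number of isomorphism classes of cubic fourfolds $Y$ for which there exists a Hodge isometry $T(X) \simeq T(Y)$. Combining global Torelli for cubic fourfolds with Laza–Looijenga's description of the image of the period map, this is the same as counting $O(L)$-orbits of primitive Hodge embeddings $T \hookrightarrow L$ whose orthogonal complement is a negative-definite lattice that contains no vector of square $2$, nor any vector of square $6$ with divisibility $3$ (the Hassett-type conditions cutting out the smooth cubic fourfold locus).

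Second, I invoke Nikulin's theory of primitive embeddings. Any primitive embedding $T \hookrightarrow L$ is determined, up to the $O(L)$-action preserving $T$ setwise, by the pair
\begin{equation*}
    \bigl( N' := T^{\perp}_L,\; \gamma : A_{N'} \xrightarrow{\sim} A_T \bigr),
\end{equation*}
where $\gamma$ is the glue anti-isometry. Signature and discriminant form of $N'$ are forced by those of $L$ and $T$, so $N'$ lies in the genus of $N$; the permissible $N'$ are precisely those in $\mathcal{H}(N)$. The hypothesis $3 \nmid \disc(T)$ enters here: it guarantees that the "short vector" constraints on the ambient $L$ are detected already on the summand $N'$, rather than appearing through the glue with $T$, so that ruling out such vectors on $N'$ alone suffices.

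Third, I organise the orbit count. For fixed $N' \in \mathcal{H}(N)$, the set of glue anti-isometries is a torsor under $O(A_T)$. Two embeddings produce isomorphic cubic fourfolds iff they differ by an element of $O(L)$ that, on the decomposition $N' \perp T$, restricts to $(\psi, \phi)$ with $\psi \in O(N')$ and $\phi \in O_{\mathrm{Hodge}}(T)$; by Nikulin, this element extends to $L$ precisely when the induced glue discrepancy lies in the images of the natural maps
\begin{equation*}
    O(N') \longrightarrow O(A_{N'}) \cong O(A_T), \qquad O_{\mathrm{Hodge}}(T) \longrightarrow O(A_T).
\end{equation*}
This gives, for each $N'$, a contribution of $|O(N') \backslash O(A_T) / O_{\mathrm{Hodge}}(T)|$, and summing over $N' \in \mathcal{H}(N)$ yields the formula.

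The main obstacle I expect is Step two: precisely justifying that, under the assumption $3 \nmid \disc(T)$, the short-vector conditions on $N'$ correctly match the Hassett conditions on the full ambient lattice $L$. This requires showing that no class of square $2$ or of square $6$ and divisibility $3$ can appear in the algebraic part after gluing with $T$ unless it already appeared in $N'$, a step that hinges on discriminant form divisibility considerations. The remaining orbit-counting bookkeeping via Nikulin's theory is standard once this compatibility is in place, and the passage from Hodge-isometry classes of transcendental lattices to isomorphism classes of cubic fourfolds follows from Bondal–Orlov reconstruction combined with global Torelli.
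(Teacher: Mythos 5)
Your overall strategy is the same as the paper's: reduce via the Derived Torelli assumption, global Torelli, and the surjectivity of the period map to counting $O_{\mathrm{Hodge}}(T(X))$-orbits of primitive embeddings $T(X)\hookrightarrow\cublat$ with prescribed orthogonal complement, and then convert each such count into a double coset via Nikulin's theory (this is exactly Proposition \ref{prop: correspondence between FM partners and primitive embeddings}, Corollary \ref{cor: counting primitive embeddings}, and the bijection $\Gamma_{N'}$ in the paper). However, there is a genuine error in your Step two: you assert that the glue datum is an isomorphism $\gamma\colon A_{N'}\xrightarrow{\sim}A_{T}$ and that $O(A_{N'})\cong O(A_{T})$. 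This treats $\cublat$ as if it were unimodular, as in the K3 case of Hosono--Lian--Oguiso--Yau and Ma, but $\disc(\cublat)=3$. By Lemma \ref{lem: gluing group orders}, the hypothesis $3\nmid\disc(T(X))$ forces $|G|=\disc(T(X))$ and $\disc(N')=3|G|$, so $A_{N'}\simeq A_{T(X)}(-1)\oplus C_3$ and $O(A_{N'})\simeq O(A_{T(X)})\times\{\pm\id_{C_3}\}$; the glue map is an injection of $A_{T(X)}$ onto the unique index-$3$ subgroup of $A_{N'}$, not an isomorphism onto all of $A_{N'}$. The action of $O(N')$ on $O(A_{T(X)})$ must therefore be defined through the projection killing the $O(C_3)$ factor -- this is precisely the content of Remark \ref{rem: isometry actions}, and it is the one point where the cubic-fourfold count genuinely differs from the K3 template. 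Conflating $O(A_{N'})$ with $O(A_{T(X)})$ risks miscomputing the image of $O(N')$ (an isometry acting as $-\id$ on $C_3$ and trivially on $A_{T(X)}(-1)$ is nontrivial in $O(A_{N'})$ but must act trivially in the double coset).

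The second gap is the one you yourself flag as "the main obstacle": matching the short-vector conditions of $\mathcal{H}(N)$, where divisibility is computed inside $N'$, with the period-map conditions of Theorem \ref{thm: surjectivity of the period map}, where divisibility is computed inside $H^4(Y,\Z)_{\prim}$. You correctly identify that this needs a discriminant-form argument but do not carry it out. The paper does this in Lemma \ref{lem: genus is fixed}: if $v\in N'$ has $v^2=6$ and $\dv_{N'}(v)=3$, then $\tfrac{v}{3}$ generates the $C_3$ summand of $A_{N'}\simeq A_{T(Y)}(-1)\oplus C_3$, which is identified with $G^\perp/G\simeq A_{H^4(Y,\Z)_{\prim}}$, forcing $\dv(v)=3$ in the ambient lattice and contradicting surjectivity of the period map. (Two smaller points: the orthogonal complement $N'$ of $T(X)$ in $\cublat$ is positive definite, not negative definite; and the fact that the genus of $N'$ is fixed requires pinning down which of the two quadratic modules of order $3$ occurs, which the paper does via a signature-mod-$8$ computation in Lemma \ref{lem: genus is fixed}.)
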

We also prove a partial counting formula for Fourier--Mukai partners of a cubic fourfold $X$ of which the primitive algebraic lattice $N\coloneqq A(X)_\mathrm{prim}$ has discriminant indivisible by $3$. For such a cubic fourfold, we prove in Proposition \ref{prop: counting formula div 3} that the number of Fourier--Mukai partners $Y$ which satisfy $A(Y)_\mathrm{prim} \simeq N'\in \mathcal{H}(N)$ is given by
\[
|O(N')\backslash O(A_{N})/O_{\mathrm{Hodge}}(T(X))|.
\]
Note that $X$ may have Fourier--Mukai partners $Y$ for which the discriminant of $A(Y)_\mathrm{prim}$ is a multiple of $3$. In this case we have $A(Y)_\mathrm{prim}\notin \mathcal{H}(N)$. We give examples of this phenomenon in Remark \ref{rem: the other FM partners for L29}.

Finally, we study the following equivalence relations for cubic fourfolds (we copy the notation from \cite{BFM24}). For two cubic fourfolds $X$, $Y$, we write:
\begin{enumerate}
    \item[\textbf{BE}:] $X$ and $Y$ are birational: $X\overset{\simeq}{\dashrightarrow} Y$
    \item[\textbf{BF}:] $X$ and $Y$ have birational Fano varieties of lines: $F(X)\overset{\simeq}{\dashrightarrow} F(Y)$
    \item[\textbf{FM}:] The Kuznetsov components of $X$ and $Y$ are equivalent: $\mathcal{A}_X \simeq \mathcal{A}_Y$
    \item[\textbf{DF}:] The Fano varieties of lines of $X$ and $Y$ are derived equivalent: $\Db(F(X)) \simeq \Db(F(Y))$
    \item[\textbf{LE}:] $X$ and $Y$ are $\LL$-equivalent: $X\sim_\LL Y$
    \item[\textbf{LF}:] The Fano varieties of lines of $X$ and $Y$ are $\LL$-equivalent: $F(X) \sim_\LL F(Y)$
\end{enumerate}

\begin{theorem}[Theorem \ref{thm: L equivalences classes are finite}, Theorem \ref{thm: L implies FM}, Proposition \ref{prop: LE implies LF}, Corollary \ref{cor: BF implies LF}, Theorem \ref{thm: LF implies FM and DF}]
\label{thm: main theorem intro}
    For two cubic fourfolds $X$, $Y$, we have the following implications, where the implications marked by $\dagger$ are proved only when $X$ and $Y$ satisfies Derived Torelli and $X$ is such that $\End_\mathrm{Hodge}(T(X)) \simeq \Z$:
    \begin{equation} \label{eq: our implications diagram}
    \xymatrix{
    & \mathbf{LE} \ar@{=>}[dl]_{\dagger} & \mathbf{LF} \ar@{=>}[dll]^(.30){\dagger} \ar@{<=}[l] \ar@{<=}[dd] \ar@{=>}[dr]^{\dagger} & \\
    \mathbf{FM}  &&& \mathbf{DF} \\
    & \mathbf{BE} \ar@{=>}[uu]|(.30){\times}& \mathbf{BF} &
    }
\end{equation}
\end{theorem}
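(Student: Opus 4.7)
The plan is to prove the six implications of diagram \eqref{eq: our implications diagram} separately, grouped by proof technique. Each is a consequence of one of Theorems \ref{thm: L equivalences classes are finite}, \ref{thm: L implies FM}, \ref{thm: LF implies FM and DF}, Proposition \ref{prop: LE implies LF} or Corollary \ref{cor: BF implies LF}.

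\textbf{The $\dagger$-labeled implications} ($\mathbf{LE}\Rightarrow\mathbf{FM}$, $\mathbf{LF}\Rightarrow\mathbf{FM}$, $\mathbf{LF}\Rightarrow\mathbf{DF}$) all rest on a common Hodge-theoretic reduction. Applying the motivic realization $\gravc \to \gghsfour$, which sends $\LL$ to the Tate Hodge structure (a non-zero-divisor in $\gghsfour$), to the relation $\LL^n([X]-[Y]) = 0$ yields $[H^*(X)] = [H^*(Y)]$ in $\gghsfour$. A Krull-Schmidt argument for polarizable Hodge structures then extracts an isomorphism of transcendental Hodge structures $T(X) \simeq T(Y)$. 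Under the hypothesis $\End_{\mathrm{Hodge}}(T(X)) \simeq \Z$, the polarization on $T(X)$ is unique up to scalar, forcing this isomorphism to be, up to sign, a Hodge isometry. Assumption \ref{as: Derived Torelli for cubic fourfolds INTRO} then converts this into $\mathcal{A}_X \simeq \mathcal{A}_Y$, proving $\mathbf{LE}\Rightarrow\mathbf{FM}$. The same mechanism applied to $F(X) \sim_\LL F(Y)$, composed with the Beauville--Donagi Hodge isometry $T(F(X)) \simeq T(X)(-1)$, yields $\mathbf{LF}\Rightarrow\mathbf{FM}$. For $\mathbf{LF}\Rightarrow\mathbf{DF}$, I would invoke Derived Torelli for hyperkähler fourfolds of $\Ktwo$-type (Taelman, Beckmann, Bottini) to convert the Hodge isometry on $T(F(X))$ into a derived equivalence $\Db(F(X)) \simeq \Db(F(Y))$.

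\textbf{The unconditional geometric implications.} For $\mathbf{LE}\Rightarrow\mathbf{LF}$, the Galkin--Shinder formula expresses $[F(X)]$ as a polynomial in $[X]$ and $\LL$ in $\gravc[\LL^{-1}]$, so $\LL^n([X]-[Y])=0$ immediately yields $\LL^m([F(X)]-[F(Y)])=0$ for suitable $m$. For $\mathbf{BF}\Rightarrow\mathbf{LF}$, birational hyperkähler fourfolds of $\Ktwo$-type are connected by a sequence of Mukai flops along Lagrangian $\PP^2$-subvarieties; since the flopping loci on both sides are isomorphic to $\PP^2$, one gets $[F(X)] = [F(Y)]$ directly in $\gravc$, and hence $F(X) \sim_\LL F(Y)$.

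\textbf{Finiteness of $\LL$-equivalence classes and the main obstacle.} For a fixed $X$, every $\LL$-equivalent $Y$ satisfies $T(Y) \simeq T(X)$ as Hodge structures by the motivic realization argument above (this step does not use the $\dagger$ assumption). The algebraic complement $A(Y) \subset H^4(Y,\Z)$ then lies in a fixed genus inside $\fullcublat$, hence takes only finitely many values up to isometry; Global Torelli for cubic fourfolds, combined with the standard finiteness of lattice embeddings within a genus, bounds the number of such $Y$ in each class. The \emph{main obstacle} in this entire package is the upgrade step that extracts a genuine Hodge \textit{isometry}, rather than merely an isomorphism, of transcendental lattices from the relation $\LL^n([X]-[Y])=0$: the sign-ambiguity is controlled precisely by $\End_{\mathrm{Hodge}}(T(X)) \simeq \Z$, which is exactly why this hypothesis must appear in the $\dagger$-marked implications of Theorem \ref{thm: main theorem intro}.
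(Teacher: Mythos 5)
Your overall architecture matches the paper's: motivic Hodge realisation plus Krull--Schmidt to extract Hodge isomorphisms of transcendental lattices, an upgrade to a Hodge isometry using $\End_{\mathrm{Hodge}}(T(X))\simeq\Z$, and then Assumption \ref{as: Derived Torelli for cubic fourfolds INTRO}. Two of your routes genuinely differ from the paper's and are worth comparing. For $\mathbf{BF}\Rightarrow\mathbf{LF}$ you factor the birational map into Mukai flops (Wierzba--Wi\'sniewski) and cancel the $\PP^2$'s termwise; the paper instead deforms to families isomorphic away from a point and applies the motivic volume of Nicaise--Shinder (Proposition \ref{prop: birational implies same motive}). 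Your version is more elementary but specific to fourfolds; the paper's works for hyperk\"ahler manifolds of any dimension. For $\mathbf{LF}\Rightarrow\mathbf{DF}$ you invoke a Derived Torelli theorem for $\Ktwo$-type fourfolds directly, whereas the paper first proves $\mathbf{LF}\Rightarrow\mathbf{FM}$ and then applies \cite{KS25}. Also, in $\mathbf{LE}\Rightarrow\mathbf{LF}$ the beautiful formula involves $[X^{[2]}]$, which is \emph{not} a polynomial in $[X]$ and $\LL$; you need the separate fact that $\LL$-equivalence is preserved by Hilbert squares (\cite[Lemma 2.1]{Oka21}) before the formula applies.

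The first genuine gap is the upgrade step itself, which you identify as the main obstacle but do not actually carry out. Zarhin's theorem only gives $f^*q_Y=\lambda q_X$ for some $\lambda\in\Q^\times$; "unique up to scalar" does not force $\lambda=\pm1$. The paper pins down $\lambda$ by first proving $\disc(T(X))=\disc(T(Y))$, extracted from $\LL$-equivalence via the gluing-group homomorphism $\gghs\to\Q^\times$ (Lemma \ref{lem: gluing group homomorphism} with Lemma \ref{lem: L equivalent cubic fourfolds have equal discriminants}), and then uses $\rk T(X)\neq 4$ — a hypothesis you never invoke — to exclude the anti-isometry. For the $\mathbf{LF}$ implications there is a further wrinkle you skip: $H^2(F(X),\Z)$ is not unimodular, so the identity $|G(F(X))|=\disc(T(F(X)))$ needed to run the same argument on the Fano side is not automatic; it is exactly the coarseness computation of Lemma \ref{lem: coarseness of a fano variety} via Addington's embedding.

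The second gap is in the finiteness argument. A Hodge \emph{isomorphism} $T(X)\simeq T(Y)$ does not respect bilinear forms, so it fixes neither the genus of $T(Y)$ nor the discriminant form of $A(Y)$; your claim that $A(Y)$ "lies in a fixed genus" is unjustified, and even granted it, finiteness of lattices in a genus does not bound the number of $\Aut_{\mathrm{Hodge}}$-orbits of compatible pairings, which is what Torelli actually requires. The real content of the paper's Proposition \ref{prop:Finitely many 4fold with iso Hodge} is Efimov's finiteness of these orbits via Zarhin's structure theorem for $\End_{\mathrm{Hodge}}(T(H_X))\otimes\Q$ (with no assumption that it is $\Z$), followed by Lemma \ref{lemma:Finitely many 4fold with isometric Hodge}, which uses finiteness of vectors of square $3$ in the positive-definite lattice $A(X)$ to pass from Hodge-isometric $H^4$ to finitely many isomorphism classes of $Y$. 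None of this is replaced by "standard finiteness of lattice embeddings within a genus".
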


The fact that $\LL$-equivalent cubic fourfolds are Fourier--Mukai partners was recently independently proved in \cite{BB2025} for very general cubic fourfolds in $\mathcal{C}_d$ for $d \equiv 0,2\pmod 6$ not divisible by $9$. We note that such cubic fourfolds satisfy Derived Torelli. They also prove \textbf{LE} $\implies$ \textbf{LF} and the fact that \textbf{BE} does not imply \textbf{LE}.

For the convenience of the reader, let us complete Diagram \ref{eq: our implications diagram} with the implications that are already known. The implication \textbf{FM} $\implies$ \textbf{DF} was proved recently in \cite{KS25}, and the fact that \textbf{BF} $\implies$ \textbf{DF} follows from the proof of the $D$-equivalence conjecture for hyperk\"ahler manifolds of \Kn-type of \cite{MSYZ25}. Finally, if the cubic fourfolds satisfy Derived Torelli, we also have \textbf{DF} $\implies$ \textbf{FM} by \cite[Corollary 9.3]{Bec22}. The question of whether implication \textbf{FM} $\implies$ \textbf{BE} holds is a conjecture by Huybrechts \cite[Conjecture 7.3.21]{Huy2023}, and the implication \textbf{BF} $\implies$ \textbf{BE} is conjectured in \cite[Conjecture 1.2]{BFM24}.
For a survey on these equivalence relations, we refer to \cite{BFM24}.

\begin{equation*}
    \xymatrix{
    & \mathbf{LE} \ar@{=>}[dl]_{\dagger}& \mathbf{LF} \ar@{=>}[dll]^(.30){\dagger} \ar@{<=}[l] \ar@{<=}[dd] \ar@{=>}[dr]^{\dagger} & \\
    \mathbf{FM} \ar@<0.5ex>@{=>}[rrr] \ar@{=>}[dr]_{\text{conj}} &&& \mathbf{DF} \ar@<0.5ex>@{=>}[lll]^{\dagger}\\
    & \mathbf{BE} \ar@{=>}[uu]|(.30){\times} & \mathbf{BF} \ar@{=>}[ur] \ar@{=>}[l]^{\text{conj}} &
    }
\end{equation*}

Finally, we prove that $\LL$-equivalence classes of cubic fourfolds are finite:

\begin{theorem}[Theorem \ref{thm: L equivalences classes are finite}] \label{thm: L equivalence classes are finite INTRO}
    For any cubic fourfold $X$, there exist finitely many cubic fourfolds $Y$ such that $X\sim_\LL Y$.
\end{theorem}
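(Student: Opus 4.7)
The plan is to combine a Hodge-theoretic realization with the global Torelli theorem for cubic fourfolds and a finiteness result from lattice theory, thereby reducing the statement to a standard finiteness in the theory of lattices.

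First, I would apply the realization morphism from $\gravc$ into the additive Grothendieck group $\gghsf$ of pure polarizable integral Hodge structures, which sends $[X]$ to $[H^*(X,\Z)]$ on the Bittner generators and $\LL$ to $[\Z(-1)]$. Since $[\Z(-1)]$ is invertible with respect to the tensor structure on this group, the equation $\LL^n([X]-[Y])=0$ in $\gravc$ yields the equality $[H^*(X,\Z)]=[H^*(Y,\Z)]$ in $\gghsf$. Because $\gghsf$ is freely generated by the isomorphism classes of indecomposable polarizable integral Hodge structures, this upgrades to a Hodge isometry $H^*(X,\Z)\simeq H^*(Y,\Z)$; restricting to the transcendental part, which is a canonical subobject of $H^4$, then produces a Hodge isometry $T(X)\simeq T(Y)$.

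Next, I would invoke Voisin's global Torelli theorem for cubic fourfolds, according to which $Y$ is determined up to isomorphism by its polarized Hodge structure on the primitive lattice $H^4(Y,\Z)_\mathrm{prim}$, equivalently by a primitive embedding of the Hodge-lattice $T(Y)$ into the fixed cubic cohomology lattice $\fullcublat$ modulo the monodromy action. The preceding step pins down $T(Y)$ up to Hodge isometry (isomorphic to $T(X)$), so the counting problem reduces to counting primitive embeddings of the fixed abstract lattice $T(X)$ into $\fullcublat$. By Nikulin's finiteness theorem for primitive embeddings of a lattice into another lattice of fixed rank, this set is finite, and hence so is the set of cubic fourfolds $Y$ with $X\sim_\LL Y$.

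The main obstacle is the realization step: ensuring that the map into $\gghsf$ retains the polarizations, so that $T(X)\simeq T(Y)$ is produced as an honest Hodge isometry rather than a mere isomorphism of underlying Hodge structures. Should working with the polarized integral Grothendieck group turn out to be too delicate, an essentially equivalent alternative is to realize rationally, via $K_0(\operatorname{HS}_\Q)$ which is freely generated by irreducibles, to obtain a rational Hodge isometry $T(X)_\Q\simeq T(Y)_\Q$; since $T(Y)$ embeds primitively into $\fullcublat$ it has bounded discriminant, and the classical finiteness of integral lattices of bounded discriminant inside a fixed rational quadratic space then yields the same conclusion.
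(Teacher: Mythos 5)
Your overall architecture (Hodge realization, then Torelli, then a lattice-theoretic finiteness) matches the paper's strategy, but two of your middle steps are genuinely wrong, and they are exactly where the real work lies. First, $\gghsf$ is \emph{not} freely generated by indecomposables: Krull--Schmidt fails for integral polarizable Hodge structures (just as it fails for lattices and for torsion-free abelian groups of finite rank), so the equality $[H^4(X,\Z)]=[H^4(Y,\Z)]$ in $\gghsfour$ does not upgrade to an isomorphism, let alone an isometry. The paper instead invokes Efimov's Theorem~2.7, which asserts only that each class in this group contains \emph{finitely many} isomorphism classes of objects --- finiteness, not uniqueness --- and that weaker statement is what the proof of Theorem \ref{thm: L equivalences classes are finite} actually uses.

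Second, you correctly flag that the realization forgets the intersection form, but neither of your proposed repairs works. A Hodge isometry $T(X)\simeq T(Y)$ is simply not available in this generality: the paper proves it only under the extra hypotheses $\rk T(X)\neq 4$ and $\End_{\mathrm{Hodge}}(T(X))\simeq\Z$ (Theorem \ref{thm: L equivalence implies Hodge isometry}), whereas Theorem \ref{thm: L equivalences classes are finite} is unconditional. Your fallback claim that a primitive sublattice of $\fullcublat$ ``has bounded discriminant'' is false (rank-one primitive sublattices of $U$ already realize arbitrarily large discriminants; concretely $\disc(T(X))=d$ for a very general member of $\mathcal{C}_d$, and $d$ is unbounded); the discriminant of $T(Y)$ is in fact pinned down, but only by equating it with that of $T(X)$ via the gluing-group homomorphism of Lemma \ref{lem: gluing group homomorphism}, which you do not invoke. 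More importantly, even with the discriminant fixed, the finiteness you need is of symmetric pairings on a fixed Hodge structure up to its group of Hodge automorphisms, and controlling that group is the content of Proposition \ref{prop:Finitely many 4fold with iso Hodge}, which rests on Zarhin's theorem that the endomorphism algebra of a K3-type transcendental Hodge structure is a totally real or CM field; ``classical finiteness of lattices of bounded discriminant'' does not substitute for this. Only your final step (Torelli plus finiteness of vectors of square $3$ in the positive-definite lattice $A(X)$, as in Lemma \ref{lemma:Finitely many 4fold with isometric Hodge}) survives essentially as written.
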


\subsection*{Acknowledgements}
We thank Franco Giovenzana for pointing out the paper \cite{FL23} to us, and we would also like to thank Evgeny Shinder for telling us about Proposition \ref{prop: birational implies same motive}. Finally, we thank Corey Brooke, Sarah Frei, and Lisa Marquand for useful comments on an earlier version of this paper. This work is a contribution to Project A22 in SFB 195 No. 286237555 of DFG.
The second author is a member of the INdAM group GNSAGA.

\section{Preliminaries}
\subsection{Lattice theory}
Our main reference for lattice theory is \cite{Nik80}.
A \textit{lattice} is a free, finitely generated abelian group $L$ together with a non-degenerate symmetric integral bilinear form $$(-,-)\colon L\times L\to \Z.$$

For elements $v,w\in L$, we often write $v^2\coloneqq (v, v)$ and $v\cdot w \coloneqq (v,w)$. A lattice $L$ is called \textit{even} if $v^2$ is even for all $v\in L$.

\begin{definition}
    Let $v\in L$ be an element of a lattice. The \textit{divisibility} of $v$ in $L$ is defined to be the positive integer
    \[
    \dv(v)\coloneqq \gcd_{w\in L}(v\cdot w).
    \]
\end{definition}

The \textit{dual} of $L$ is the group $L^\vee \coloneqq \Hom(L,\Z)$, and we have a natural group homomorphism
\[
\functionstar{L}{L^\vee}{v}{(v,-),}
\]
which is injective as $L$ is non-degenerate. The cokernel $A_L\coloneqq L^\vee/L$ is called the \textit{discriminant group}.
Via the embedding $L\hookrightarrow L\otimes \Q$, there is a natural identification
\begin{equation}\label{eq: dual lattice in rationalisation}
L^\vee = \left\{x\in L\otimes \Q \mid \forall v\in L: (x,v)\in \Z\right\}.
\end{equation}
This defines a natural symmetric bilinear form on $L^\vee$ taking values in $\Q$, which induces a bilinear form $b\colon A_L\times A_L\to \Q/\Z$. If $L$ is even, the bilinear form $b$ induces a quadratic form $q\colon A_L\to \Q/2\Z$. The pair $(A_L,q)$ is called a \textit{finite quadratic module.}

The order of $A_L$ is called the \textit{discriminant} $\disc(L)$ of $L$. If $\disc(L) = 1$, we say that $L$ is \textit{unimodular}. If $M\in \GL_n(\Z)$ is the matrix for the bilinear form on $L$ with respect to some $\Z$-basis of $L$, we have
\[
\disc(L) = |\det(M)|.
\]

If $L$ is a lattice, then $L\otimes \Q$ (resp. $L\otimes \mathbb{R}$) is a $\Q$- (resp. $\mathbb{R}$-)vector space which inherits a symmetric bilinear form from $L$. The \textit{signature} $\sgn(L) = (\ell_+,\ell_-)$ is defined to be the signature of the real symmetric bilinear form on the $\mathbb{R}$-vector space $L\otimes \mathbb{R}$. 

The \textit{genus} $\mathcal{G}(N)$ of a lattice $N$ is the set of lattices $N'$ satisfying each of the following:
\[
\rk(N) = \rk(N'), \qquad \sgn(N) = \sgn(N'), \qquad A_N \simeq A_{N'}.
\]

For lattices $L$ and $N$, a group homomorphism $f\colon N\to L$ with the property that $(v,w) = (f(v),f(w))$ for all $v,w\in N$ is called a \textit{metric morphism}. Bijective metric morphisms are called \textit{isometries}.
The group of isometries of $L$ is denoted $O(L)$. Similarly, we write $O(A_L)$ for the group of isometries of the finite quadratic module $A_L$. Note that we have a natural group homomorphism 
\[
\functionstar{O(L)}{O(A_L)}{f}{\overline{f}.}
\]

Metric morphisms are injective, since we assume our lattices to be non-degenerate.
A metric morphism is called a \textit{primitive embedding} if $L/N$ is a torsion-free abelian group. On the other hand, $f\colon N\to L$ is called an \textit{overlattice} if $L/N$ is a finite abelian group.
Two overlattices $f\colon N\hookrightarrow L$, $g\colon N'\hookrightarrow L'$ are called \textit{isomorphic}
if there exist isometries $\sigma\colon N\simeq N'$, $\tau\colon L\simeq L'$ fitting into a commutative square
\begin{equation*}
        \xymatrix{
            N \ar[d]_f \ar[r]_\simeq^\sigma& N \ar[d]^g\\
            L \ar[r]_\simeq^\tau& L'.            
        }
\end{equation*}
Let $f\colon N\to L$ be an overlattice, and assume $N$ and $L$ are both even. The following results also have analogues for odd lattices \cite{Nik80}.
From \eqref{eq: dual lattice in rationalisation}, we obtain a natural chain of finite-index injective group homomorphisms:
\[
N\hookrightarrow L\hookrightarrow L^\vee \hookrightarrow N^\vee.
\]
In particular, $G_L\coloneqq L/N$ can naturally be seen as a subgroup of $A_N$. Note that $L/N$ is an isotropic subgroup, and that $G_L^\perp = L^\vee/N$. This means that we have an isometry of finite quadratic modules $G_L^\perp/G_L\simeq A_L$. 

\begin{lemma}\cite[Proposition 1.4.2]{Nik80} \label{lem: overlattice rosetta stone}
    Let $f\colon N\to L$ and $g\colon N\to L'$ be two even overlattices of an even lattice $N$.
    \begin{enumerate}
        \item There is an isometry $\tau\colon L\simeq L'$ fitting into a commutative triangle
        \begin{equation*}
            \xymatrix{
            &N \ar[dl]_f \ar[dr]^g& \\
            L \ar[rr]^\tau_\simeq&& L'            
            }
        \end{equation*}
        if and only if $G_L = G_{L'}$ as subgroups of $A_N$.
        \item Given an isometry $\sigma\colon N\simeq N$, there exists an isometry $\tau\colon L\simeq L'$ fitting into a commutative square 
        \begin{equation*}
            \xymatrix{
            N \ar[d]_f \ar[r]_\simeq^\sigma& N \ar[d]^g\\
            L \ar[r]_\simeq^\tau& L'            
            }
        \end{equation*}
        if and only if $\overline{\sigma}(G_L) = G_{L'}$.
        \item The assignment $(f\colon N\hookrightarrow L)\mapsto G_L$ is a bijection between the set of isomorphism classes of overlattices of $N$, and the set of $O(N)$-orbits of isotropic subgroups of $A_N$. 
    \end{enumerate}
\end{lemma}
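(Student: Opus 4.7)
The plan is to realize every even overlattice $f\colon N\hookrightarrow L$ concretely as an intermediate lattice $N\subset L\subset N^\vee$ inside the fixed rational ambient space $N\otimes\Q$. The map $f$ induces an isomorphism $N\otimes\Q\simeq L\otimes\Q$, and since the bilinear form on $L$ takes integer values, this identification places $L$ inside $N^\vee$; the resulting subgroup $G_L := L/N \subset A_N$ is isotropic with respect to the quadratic form $q$ precisely because $L$ is even. Conversely, for any isotropic $H\subset A_N$, the preimage $\pi^{-1}(H)\subset N^\vee$ of $H$ under the canonical projection $\pi\colon N^\vee\twoheadrightarrow A_N$ is an even overlattice of $N$ with associated subgroup $H$. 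This already yields the surjectivity part of (3).

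For parts (1) and (2), I would exploit that any isometry $\tau\colon L\to L'$ extends uniquely to an isometry of the common rational ambient space $N\otimes\Q$. In the setting of (1), the compatibility $\tau\circ f = g$ forces this rational extension to be the identity on $N\otimes\Q$, hence also on $L$; so $L$ and $L'$ coincide as subgroups of $N^\vee$, i.e., $G_L = G_{L'}$ inside $A_N$. The converse is immediate: if $G_L = G_{L'}$, then $L = L'$ inside $N^\vee$, and $\tau = \id$ does the job. For (2), an isometry $\sigma\colon N\to N$ extends uniquely to $\sigma_\Q\colon N\otimes\Q\to N\otimes\Q$ and induces $\overline{\sigma}\in O(A_N)$; a $\tau\colon L\to L'$ filling the square exists if and only if $\sigma_\Q(L) = L'$ inside $N^\vee$, and passing to discriminant groups this becomes exactly $\overline{\sigma}(G_L) = G_{L'}$.

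Part (3) then drops out of (2): two overlattices are isomorphic iff such a square can be filled in for some $\sigma\in O(N)$, which is the same as saying that $G_L$ and $G_{L'}$ lie in the same $O(N)$-orbit on isotropic subgroups of $A_N$. Combined with the surjectivity noted in the first paragraph, this gives the bijection. I do not expect any serious obstacle: the only place where care is needed is the translation between the abstract overlattice $f\colon N\hookrightarrow L$ and its canonical realization $L\subset N^\vee$; once this dictionary is in place, everything follows from uniqueness of rational extensions of isometries together with the tautological identification of evenness of $L$ with isotropy of $G_L$ in $(A_N, q)$.
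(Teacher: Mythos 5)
Your proof is correct, but note that the paper does not prove this lemma at all: it is quoted verbatim from Nikulin (Proposition 1.4.2 of \cite{Nik80}), and the paragraph preceding the statement already sets up exactly the dictionary you use, namely the chain $N\hookrightarrow L\hookrightarrow L^\vee\hookrightarrow N^\vee$ and the identification of $G_L=L/N$ with an isotropic subgroup of $A_N$. Your argument --- realising every overlattice canonically inside $N^\vee\subset N\otimes\Q$, observing that evenness of $L$ is equivalent to isotropy of $G_L$ for $q$, and using the uniqueness of rational extensions of isometries to translate the commutative triangle and square into the conditions $G_L=G_{L'}$ and $\overline{\sigma}(G_L)=G_{L'}$ --- is precisely Nikulin's proof, so there is nothing to object to; the only point worth making explicit in a written-up version is that isotropy of $H\subset A_N$ for $q$ (not merely for $b$) is what guarantees that $\pi^{-1}(H)$ is both integral and even in the surjectivity step of part (3).
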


Overlattices play a crucial role in the study of primitive embeddings of lattices, for the following reason. If $f\colon N\to L$ is a primitive embedding, let us denote its \textit{orthogonal complement} by $$T = N^\perp \coloneqq \left\{v\in L\mid\forall n\in N: v\cdot n = 0\right\}\subset L.$$ Then $T\hookrightarrow L$ is also a primitive embedding. However, the metric morphism $N\oplus T\hookrightarrow L$ is an overlattice.
By the above discussion, we obtain a natural chain of finite-index embeddings
\[
N\oplus T \hookrightarrow L\hookrightarrow L^\vee \hookrightarrow N^\vee \oplus T^\vee.
\]
Thus the finite group $G_L\coloneqq L/(N\oplus T)$ is naturally a subgroup of $A_N\oplus A_T$, and we call it the \textit{gluing group} of the primitive embedding $f$.
The natural projection maps $A_N\oplus A_T\to A_N$ and $A_N\oplus A_T\to A_T$ restrict to injective group homomorphisms $i_N\colon G_L \hookrightarrow A_N$ and $i_T\colon G_L \hookrightarrow A_T$, respectively.

To summarise, the primitive embedding $f\colon N\hookrightarrow L$ determines a pair
$(G_f,\gamma_f),$ where $G_f \coloneqq i_N(G_L)\subset A_N$ is a subgroup, and $\gamma_f \coloneqq i_T\circ i_N^{-1} \colon G_L\to A_T$ is an injective group homomorphism, and the graph $\Gamma_{\gamma_f}\subset A_N\oplus A_T$ is precisely the gluing group $G_L$, which is an isotropic subgroup that satisfies $G_L^\perp/G_L \simeq A_L$.

\begin{remark} \label{rem: pair inducing a primitive embedding}
    Conversely, suppose $(G,\gamma)$ is a pair consisting of a subgroup $G\subset A_N$ and an injective group homomorphism $\gamma\colon G\hookrightarrow A_T$ with the property that the graph $\Gamma_\gamma\subset A_N\oplus A_T$ is an isotropic subgroup which satisfies $\Gamma_\gamma^\perp/\Gamma_\gamma \simeq A_L$.
    Then we obtain an overlattice $N\oplus T\hookrightarrow L'$ corresponding to $\Gamma_\gamma$ via Lemma \ref{lem: overlattice rosetta stone}. Moreover, the metric morphism $N\hookrightarrow N\oplus T\hookrightarrow L'$ is a primitive embedding with orthogonal complement equal to the image of $T$ in $L'$.
\end{remark}

By construction, the lattice $L'$ satisfies $A_{L'} \simeq \Gamma_\gamma^\perp/\Gamma_\gamma \simeq A_L$. 
Unfortunately, $L'$ is not necessarily isometric to $L$. However, since $L$ and $L'$ have the same rank, signature and discriminant group, they are in the same genus.

\begin{lemma}\cite[Proposition 1.5.1]{Nik80} \label{lem: primitive embeddings rosetta stone}
    Let $N$ be an even lattice and let $f\colon N\hookrightarrow L$ and $g\colon N\hookrightarrow L'$ be primitive embeddings into even lattices $L$, $L'$. Write $T\coloneqq N^\perp\subset L$ and $T'\coloneqq N^\perp\subset L'$. 
    \begin{enumerate}
        \item There is an isometry $\tau\colon L\simeq L'$ fitting into a commutative triangle
        \begin{equation*}
            \xymatrix{
                & N \ar[dl]_{f} \ar[dr]^g& \\
                L \ar[rr]^\tau_\simeq&& L'
            }
        \end{equation*}
        if and only if there is an isometry $\sigma_T\colon T\simeq T'$ such that $(G_f,\overline\sigma_T\circ \gamma_f) = (G_g,\gamma_g)$.
        \item Fix an isometry $\sigma_N\colon N\simeq N$. There is an isometry $\tau\colon L\simeq L'$ fitting into a commutative square
        \begin{equation*}
            \xymatrix{
                N \ar[d]_{f} \ar[r]^{\sigma_N}_\simeq& N \ar[d]^g \\
                L \ar[r]^\tau_\simeq& L'
            }
        \end{equation*}
        if and only if there is an isometry $\sigma_T\colon T\simeq T'$ such that the diagram
        \begin{equation*}
            \xymatrix{
                G_f \ar[r]^{\gamma_f} \ar[d]_{\overline\sigma_N}& A_T \ar[d]^{\overline\sigma_T} \\
                G_g \ar[r]^{\gamma_g}& A_{T'}
            }
        \end{equation*}
        commutes.
    \end{enumerate}
\end{lemma}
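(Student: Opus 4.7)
The plan is to reduce Lemma \ref{lem: primitive embeddings rosetta stone} to Lemma \ref{lem: overlattice rosetta stone} applied to the overlattices $N\oplus T \hookrightarrow L$ and $N\oplus T' \hookrightarrow L'$. The crucial observation is that if $f\colon N\hookrightarrow L$ is a primitive embedding with orthogonal complement $T$, then by construction the gluing subgroup $G_L = L/(N\oplus T)\subset A_N\oplus A_T$ is precisely the graph $\Gamma_{\gamma_f}$ of the embedding $\gamma_f\colon G_f\hookrightarrow A_T$. Similarly $G_{L'} = \Gamma_{\gamma_g}$. Part (1) is the special case $\sigma_N = \mathrm{id}_N$ of part (2), so I focus on the latter.

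For the reverse direction of (2), suppose we have $\sigma_N\colon N\simeq N$ and $\sigma_T\colon T\simeq T'$ such that the indicated square of discriminant groups commutes. These combine to give an isometry $\sigma_N\oplus\sigma_T\colon N\oplus T \simeq N\oplus T'$, and its induced isometry $\overline\sigma_N \oplus \overline\sigma_T\colon A_N\oplus A_T \simeq A_N\oplus A_{T'}$ sends $\Gamma_{\gamma_f}$ to $\Gamma_{\gamma_g}$: indeed, $(x,\gamma_f(x))\mapsto (\overline\sigma_N(x),\overline\sigma_T(\gamma_f(x))) = (\overline\sigma_N(x),\gamma_g(\overline\sigma_N(x)))$ by the commuting square and the condition $\overline\sigma_N(G_f) = G_g$. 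By Lemma \ref{lem: overlattice rosetta stone}(2), the isometry $\sigma_N\oplus\sigma_T$ extends to an isometry $\tau\colon L\simeq L'$, which by construction fits into the desired square.

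For the forward direction, given $\tau\colon L\simeq L'$ with $\tau\circ f = g\circ\sigma_N$, the equality $\tau(f(N)^\perp) = g(\sigma_N(N))^\perp = g(N)^\perp$ shows that $\tau$ sends $T$ isometrically onto $T'$. Define $\sigma_T$ to be this restriction. Then $\tau$ restricted to $N\oplus T$ coincides with $\sigma_N\oplus \sigma_T$, so Lemma \ref{lem: overlattice rosetta stone}(2) applied to the overlattices forces $(\overline\sigma_N\oplus\overline\sigma_T)(\Gamma_{\gamma_f}) = \Gamma_{\gamma_g}$. Projecting this equality to each factor gives $\overline\sigma_N(G_f) = G_g$ together with $\overline\sigma_T\circ\gamma_f = \gamma_g\circ\overline\sigma_N$ on $G_f$, which is exactly the commuting square of discriminant groups.

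The only real subtlety is the bookkeeping that makes the graph $\Gamma_{\gamma_f}\subset A_N\oplus A_T$ coincide with the image of $L/(N\oplus T)$ inside $A_{N\oplus T} = A_N\oplus A_T$; once this identification is made precise, the argument is essentially a transcription of the overlattice lemma. I anticipate no genuine obstacle beyond being careful with signs and orientations when translating between the gluing-group language and the $(G,\gamma)$ language.
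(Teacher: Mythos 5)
The paper gives no proof of this lemma: it is quoted verbatim from Nikulin \cite[Proposition 1.5.1]{Nik80}, so there is nothing internal to compare your argument against. That said, your reduction to Lemma \ref{lem: overlattice rosetta stone} is correct and is essentially Nikulin's own strategy. The key identification you rely on --- that the gluing group $G_L=L/(N\oplus T)\subset A_N\oplus A_T$ is the graph $\Gamma_{\gamma_f}$ --- is already recorded in the paper in the paragraph preceding Remark \ref{rem: pair inducing a primitive embedding}, so you may simply cite it. Both directions of your part (2) check out: the forward direction uses that $\tau$ carries $f(N)^\perp$ onto $g(N)^\perp$ and hence restricts to $\sigma_N\oplus\sigma_T$ on $N\oplus T$, and the reverse direction translates the commuting square into $(\overline\sigma_N\oplus\overline\sigma_T)(\Gamma_{\gamma_f})=\Gamma_{\gamma_g}$; and part (1) is indeed the case $\sigma_N=\id_N$. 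The one point deserving an explicit sentence is the application of Lemma \ref{lem: overlattice rosetta stone}(2), which as stated concerns two overlattices of the \emph{same} lattice with an automorphism of it, whereas you have overlattices of $N\oplus T$ and of $N\oplus T'$; you need to first transport $L'$ to an overlattice of $N\oplus T$ along $(\sigma_N\oplus\sigma_T)^{-1}$ and then apply the lemma with the identity automorphism (or restate the lemma for an isometry between two different base lattices, which follows by the same token). You flag this bookkeeping yourself, and it is harmless, but in a written-out proof it should not be left implicit.
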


The following lemma was noted in \cite[\S 14.0]{Huy16}, and was a crucial part of the technical computations in \cite{Mei25}.
\begin{lemma}\label{lem: gluing group orders}
    Let $f\colon N\hookrightarrow L$ be a primitive embedding with orthogonal complement $T = N^\perp\subset L$ and gluing group $G\subset A_N\oplus A_T$. Then we have
    \[\disc(T)\disc(N)= |G|^2\disc(L).\]
\end{lemma}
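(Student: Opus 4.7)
The strategy is to apply the general index-discriminant formula for finite-index sublattices to the overlattice $N\oplus T\hookrightarrow L$, then separately use multiplicativity of the discriminant over orthogonal direct sums.

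First I would recall the elementary fact that if $M\hookrightarrow L'$ is a finite-index embedding between lattices of the same rank, then
\[
\disc(M) = [L':M]^2\cdot \disc(L').
\]
This is immediate from the change-of-basis computation for Gram matrices: if $A$ is an integral matrix expressing a basis of $M$ in terms of a basis of $L'$, then $|\det A| = [L':M]$ and the Gram matrix of $M$ is $A^{T}G_{L'}A$, giving $\disc(M) = |\det A|^{2}\disc(L')$.

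Next I would apply this formula to the finite-index embedding $N\oplus T\hookrightarrow L$. By the very definition recalled just before the lemma, the gluing group satisfies $G = L/(N\oplus T)$, so $[L:N\oplus T] = |G|$ and therefore
\[
\disc(N\oplus T) = |G|^2\cdot \disc(L).
\]

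Finally, since the bilinear form on $N\oplus T$ is the orthogonal direct sum of those on $N$ and $T$, the Gram matrix of $N\oplus T$ is block-diagonal with blocks equal to the Gram matrices of $N$ and $T$; consequently $\disc(N\oplus T) = \disc(N)\cdot \disc(T)$. Substituting yields
\[
\disc(N)\disc(T) = |G|^2\disc(L),
\]
as required.

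This proof involves no real obstacle: every step is a direct index computation, and the only ingredient beyond the definitions is the quadratic index-discriminant formula, which is standard. The only point worth being careful about is that $N\oplus T\hookrightarrow L$ is indeed finite-index — but this holds because $N$ and $T$ are orthogonal complements of each other in $L$, so their sum has the same rank as $L$, which is exactly what allows the formula to apply.
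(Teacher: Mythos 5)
Your proof is correct and follows essentially the same route as the paper: both arguments reduce to the identity $[L:N\oplus T]=|G|$ together with the quadratic relation between the index of a finite-index sublattice and the ratio of discriminants, applied to $N\oplus T\hookrightarrow L$. The only cosmetic difference is that you justify that quadratic relation via the Gram-matrix determinant $\disc(M)=[L':M]^2\disc(L')$, whereas the paper unfolds it through the chain $N\oplus T\hookrightarrow L\hookrightarrow L^\vee\hookrightarrow N^\vee\oplus T^\vee$ and counts indices; both are standard and equivalent.
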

\begin{proof}
    Recall that we have the following chain of embeddings 
    \[
    N\oplus T\hookrightarrow L \hookrightarrow L^\vee \hookrightarrow N^\vee \oplus T^\vee.
    \]
    We have $[L:N\oplus T] = |G|$ by construction, hence also $[N^\vee \oplus T^\vee : L^\vee] = |G|$. The embedding $L\hookrightarrow L^\vee$ has index $\disc(L)$, by definition. In particular, the index of the composition of the three embeddings is $[N^\vee \oplus T^\vee : N\oplus T] = |G|^2\disc(L)$. On the other hand, we have 
    \[
    \frac{N^\vee \oplus T^\vee}{N\oplus T} \simeq A_{N\oplus T}\simeq A_N\oplus A_T,
    \]
    hence $[N^\vee \oplus T^\vee: N\oplus T] = \disc(N)\disc(T).$ To summarise, we have
    \[
    \disc(N)\disc(T) = [N^\vee \oplus T^\vee: N\oplus T] = |G|^2\disc(L),
    \]
    as required.
\end{proof}

If $L$ is a lattice of rank $1$, and $v\in L$ is a generator with $v^2 = n\in \Z$, then we usually denote $L$ by $\langle n \rangle$. 
Another important example of a lattice is the root lattice $A_2$, which is the lattice of rank $2$ given by the intersection matrix
\[
A_2 \coloneqq 
\left(
\begin{matrix}
    2 & -1 \\
    -1 & 2
\end{matrix}
\right).
\]

Finally, the \textit{hyperbolic plane}, denoted $U$, is the indefinite even unimodular lattice of rank 2. Its intersection matrix is 
\[
U\coloneqq 
\left(
\begin{matrix}
    0 & 1 \\
     1 & 0
\end{matrix}
\right).
\]

\begin{remark}\label{rem: quadratic modules of order 3}
    Let $q\colon C_3 \to \Q/2\Z$ be a non-degenerate finite quadratic form of order $3$. Then if $x\in C_3$ is a generator, we see that $q(x) = \frac{a}{3}\pmod{2\Z}$, where $a\in (\Z/3\Z)^\times$. Therefore, $q(2x) = \frac{4a}{3} \equiv \frac{a}{3}\pmod{2\Z}.$ 
    As a consequence, $C_3$ and $C_3(-1)$ are the only two isomorphism classes of non-degenerate finite quadratic forms of order $3$. 

    We now compute the signatures of these finite quadratic forms. Suppose $C_3$ is the finite quadratic module of order $3$ with $q(x) = \frac{2}{3}\pmod{2\Z}$ for $x\in C_3$ a generator. We can find an explicit lattice whose discriminant group is $C_3$. Indeed, the lattice $A_2$ is a positive-definite lattice of rank $2$ and discriminant $3$, and we can compute that, for a generator $y\in A_{A_2}$, we have $q(y) = \frac{2}{3} \pmod{2\Z}$. In other words, the discriminant group of $A_2$ is precisely $C_3$. Therefore, we have 
    \[
    \sgn(C_3) \equiv \sgn(A_2) \equiv 2 \pmod 8,
    \] and also $\sgn(C_3(-1)) \equiv \sgn(A_2(-1)) \equiv -2 \equiv 6 \pmod 8$.
\end{remark}

\subsection{Cubic fourfolds and K3 surfaces}
Our main references for K3 surfaces and cubic fourfolds are \cite{Huy16,Huy2023}.
Let $X$ be a cubic fourfold. We consider the cohomology group $H^4(X,\Z)$. The intersection product defines a lattice structure on $H^4(X,\Z)$, which abstractly makes it isometric to the lattice
\[
\fullcublat \coloneqq E_8^{\oplus 2}\oplus U^{\oplus 2}\oplus \langle 1 \rangle^{\oplus 3}.
\]

Here, $E_8$ denotes the unique positive-definite unimodular lattice of rank $8$. The signature of $\fullcublat$ is $(21,2)$. We denote $h^2\coloneqq (1,1,1)\in \langle 1\rangle^{\oplus3}$. Then we define 
\[
\cublat \coloneqq (h^2)^\perp \simeq E_8^{\oplus 2}\oplus U^{\oplus 2}\oplus A_2.
\]
If $H^2\in H^4(X,\Z)$ is the square of a hyperplane, then the primitive cohomology $H^4(X,\Z)_\text{prim}\coloneqq (H^2)^\perp\subset H^4(X,\Z)$ is isometric to $\cublat$ as a lattice. Note that $\fullcublat$ is not an even lattice, but $\cublat$ is. 

The Hodge structure on $H^4(X,\Z)$ is of \textit{K3-type}. That is, 
\[
\begin{array}{lll}
     H^{4,0}(X) &\simeq H^{0,4}(X)&= 0  \\
     H^{3,1}(X) &\simeq H^{1,3}(X) &\simeq \CC. 
\end{array}
\]
Moreover, since $H^2$ is of type $(2,2)$, the primitive cohomology $H^4(X,\Z)_\text{prim}$ inherits a Hodge structure from $H^4(X,\Z)$ which is also of K3-type.

\begin{definition}
    A lattice with a Hodge structure is called a \textit{Hodge lattice}. If $H_1, H_2$ are Hodge lattices, then a \textit{Hodge isometry} is an isomorphism of groups $f\colon H_1\simeq H_2$ which is simultaneously an isomorphism of Hodge structures and an isometry.
\end{definition}

The algebraic part of $H^4(X,\Z)$ is denoted 
\[
A(X) \coloneqq H^{2,2}(X)\cap H^4(X,\Z).
\]
This is a positive-definite lattice, which contains the class $H^2$. Since the self-intersection number of $H^2$ is $3$, it follows that $A(X)$ is not an even lattice. Finally, we denote 
\[
A(X)_\text{prim} \coloneqq (H^2)^\perp \subset A(X).
\]
As opposed to the full algebraic lattice, the primitive part $A(X)_\text{prim}$ is an even lattice, since it is a sublattice of the primitive cohomology group $H^4(X,\Z)_\mathrm{prim}$, which is even. 

\begin{definition} \label{defn:transcendental lattice of cubic 4fold}
    The \textit{transcendental lattice} of a cubic fourfold is the Hodge lattice $T(X)\coloneqq A(X)^\perp \subset H^4(X,\Z)$. Equivalently, we may define $T(X) = A(X)_\text{prim}^\perp \subset H^4(X,\Z)_\text{prim}$.
\end{definition}

\begin{theorem}[Torelli Theorem for cubic fourfolds] \cite{Voisin1986} \label{thm: torelli for cubic fourfolds}
    Let $X$ and $Y$ be two cubic fourfolds. Then $X$ and $Y$ are isomorphic if and only if there exists a Hodge isometry $H^4(X,\Z)_\text{prim}\simeq H^4(Y,\Z)_\text{prim}$.
\end{theorem}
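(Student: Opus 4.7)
The ``only if'' direction is immediate: any isomorphism $f\colon X\simeq Y$ may be arranged to send $H_X$ to $H_Y$, so that $f^*$ restricts to a Hodge isometry $H^4(Y,\Z)_\prim\simeq H^4(X,\Z)_\prim$. For the converse, the plan is to reduce to the global Torelli theorem for hyperk\"ahler fourfolds of K3$^{[2]}$-type, via the Fano variety of lines and the Beauville--Donagi isomorphism.

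Recall that $F(X)\subset\Gr(2,6)$ is a hyperk\"ahler fourfold of K3$^{[2]}$-type, with Pl\"ucker polarization $g_X\in H^2(F(X),\Z)$ satisfying $g_X^2=6$ and $\dv(g_X)=2$, and that the Abel--Jacobi map gives a Hodge isometry
\[
\alpha_X\colon H^4(X,\Z)_\prim(-1)\xrightarrow{\ \sim\ } g_X^{\perp}\subset H^2(F(X),\Z).
\]
Given a Hodge isometry $\phi\colon H^4(X,\Z)_\prim\simeq H^4(Y,\Z)_\prim$, transport it through $\alpha_X,\alpha_Y$ to obtain a Hodge isometry $\psi_0\colon g_X^{\perp}\simeq g_Y^{\perp}$, and extend to a Hodge isometry $\psi\colon H^2(F(X),\Z)\simeq H^2(F(Y),\Z)$ by $\psi(g_X)=g_Y$. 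The extension is well defined because $g_X$ and $g_Y$ lie in the same $O$-orbit of the K3$^{[2]}$-lattice, being characterized by the invariants $g^2=6$ and $\dv(g)=2$.

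Next, I would invoke Markman's refinement of Verbitsky's global Torelli theorem for polarized K3$^{[2]}$-type manifolds: a polarized Hodge isometry between $H^2(F(X),\Z)$ and $H^2(F(Y),\Z)$ that is a \emph{parallel transport operator} and sends $g_X$ to $g_Y$ lifts to an isomorphism $F(X)\simeq F(Y)$ of polarized varieties. Being a parallel transport operator is not automatic from being a Hodge isometry, so the plan is to compose $\psi$ with a suitable element of the monodromy group of $F(X)$ that stabilizes $g_X$ and acts trivially on the transcendental lattice (preserving the Hodge-theoretic content of $\psi$); Markman's explicit description of the monodromy of K3$^{[2]}$-type hyperk\"ahlers ensures the existence of such a correction.

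Finally, the Pl\"ucker embedding realizes $F(X),F(Y)\subset\Gr(2,6)\subset\PP^{14}$, and an isomorphism preserving the Pl\"ucker polarization is induced by a linear transformation of $\PP^{14}$ stabilizing $\Gr(2,6)$, hence by an element of $\PGL_6$ acting on $\PP^5$ and sending $X$ to $Y$. The main obstacle is the parallel-transport step: it requires an explicit handle on the monodromy group of $F(X)$ and a check that the stabilizer of the ample class $g_X$ acts transitively enough on the relevant set of Hodge isometries to absorb the ambiguity. All other steps are either a routine lattice extension via Lemma \ref{lem: primitive embeddings rosetta stone}, an application of Beauville--Donagi, or follow from the rigidity of the Pl\"ucker embedding of $\Gr(2,6)$.
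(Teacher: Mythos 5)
The paper offers no proof of this statement: it is quoted as Voisin's theorem, whose original argument proceeds by local Torelli plus a degeneration/density analysis of special cubics (nodal cubics and cubics containing a plane). Your plan is instead the later proof of Charles, which routes everything through the Fano variety of lines: Beauville--Donagi identifies $H^4(X,\Z)_\prim(-1)$ with $g_X^\perp\subset H^2(F(X),\Z)$, one extends the isometry over $g_X\mapsto g_Y$ (legitimate, since primitive vectors of square $6$ and divisibility $2$ form a single orbit in the $\Ktwo$-lattice by Eichler's criterion), applies the Verbitsky--Markman polarized global Torelli theorem, and then recovers $X$ from $(F(X),g_X)$. This is a genuinely different and perfectly viable route; what it buys is a conceptual proof resting on the hyperk\"ahler global Torelli theorem rather than on Voisin's delicate degeneration arguments, at the cost of invoking much heavier machinery that postdates the cited reference.

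Two steps in your sketch deserve to be nailed down rather than waved at. First, the parallel-transport issue is in fact cleaner than you suggest: for $\Ktwo$-type the discriminant group is $\Z/2\Z$ with trivial isometry group, so Markman's theorem gives $\operatorname{Mon}^2(F(X))=O^+(H^2(F(X),\Z))$; and any Hodge isometry sending $\sigma_X\mapsto\lambda\sigma_Y$ and the ample class $g_X$ to the ample class $g_Y$ automatically preserves the orientation of the positive cone (multiplication by $\lambda$ has positive determinant on the plane $\langle\operatorname{Re}\sigma,\operatorname{Im}\sigma\rangle$). So no monodromy correction is needed, but you should say why, since ``compose with a suitable monodromy operator acting trivially on the transcendental lattice'' is not obviously realizable by an integral isometry. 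Second, ``rigidity of the Pl\"ucker embedding'' hides the real content of the last step: you need that $|g_X|$ is the complete linear system cut out by the Pl\"ucker embedding (so the polarized isomorphism gives an automorphism of $\PP^{14}$), that $\Gr(2,6)$ is recovered from $F(X)\subset\PP^{14}$ (e.g.\ as the intersection of the quadrics containing $F(X)$, so the automorphism of $\PP^{14}$ restricts to one of $\Gr(2,6)$, i.e.\ an element of $\PGL_6$), and finally that $X$ is the union of the lines parametrized by $F(X)$. These are all true and are verified in Charles's paper, but as written your proposal does not supply them.
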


A remarkable class of divisors in the moduli space of cubic fourfolds are the Noether--Lefschetz divisors $\calC_d$, consisting of so called \emph{special cubic fourfolds}. A cubic fourfold $X$ is called special if its algebraic lattice $A(X)$ has rank at least $2$, or equivalently if $X$ contains a surface which is not homologous
to a complete intersection. As a consequence we have a primitive saturated lattice of rank $2$ containing the square of the hyperplane class, and the class $\alpha$ of this surface:
$$K = \langle h^2, \alpha \rangle \subset A(X).$$
The locus of special cubic fourfolds with $\disc(K)=d$ is denoted by $\calC_d$.

A fundamental tool for studying cubic fourfolds is the period map, which associates a cubic fourfold $X$ in its moduli space to its primitive cohomology $H^4_{\prim}(X,\bbZ)$, viewed as a point in the corresponding period domain, see \cite{Huy2023} for more details.

\begin{theorem}[Surjectivity of the period map for cubic fourfolds] \cite{Looijenga2009, Laza2010} \label{thm: surjectivity of the period map}
The image of the period map for cubic fourfolds is the complement of the divisors $\calC_2 \cup \calC_6$.
\end{theorem}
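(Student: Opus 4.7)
The plan is to follow the strategy of Laza and Looijenga: extend the period map to a morphism of suitable compactifications on both sides, and then argue that the extension is an isomorphism of normal projective varieties. More precisely, the approach combines the GIT construction of the moduli space of cubic fourfolds with Looijenga's theory of semi-toric arithmetic compactifications of locally symmetric varieties of type IV.

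First I would construct the GIT quotient $\mathcal{M}^{\mathrm{GIT}} = \PP H^0(\PP^5,\bigo(3)) \git \PGL_6$, identifying the GIT-stable, strictly semistable and polystable orbits. By a detailed analysis of one-parameter subgroups (a refinement of classical computations), the GIT-stable locus contains all smooth cubic fourfolds plus those with at worst simple (ADE) isolated singularities, while the finitely many strictly semistable orbits include the chordal cubic, i.e.\ the secant variety of the Veronese surface in $\PP^5$. On the period side, I would use the global Torelli theorem (Theorem \ref{thm: torelli for cubic fourfolds}) to produce an injective morphism $\mathcal{P}\colon \mathcal{M} \to \mathcal{D}/\Gamma$ on the smooth locus, and then extend $\mathcal{P}$ to the full GIT-stable locus by limit Hodge theory: a cubic fourfold with only ADE singularities still carries a pure Hodge structure of K3 type on its primitive cohomology, and its period point lies on a specific Heegner divisor.

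The central step is to apply Looijenga's construction to the linear combination of Heegner divisors $\calC_2 + \calC_6$, obtaining a canonical semi-toric compactification $\overline{\mathcal{D}/\Gamma}^{\,\calC_2 + \calC_6}$, and then show that the extended period map is an isomorphism
\[
\overline{\mathcal{P}}\colon \mathcal{M}^{\mathrm{GIT}} \xrightarrow{\ \sim\ } \overline{\mathcal{D}/\Gamma}^{\,\calC_2 + \calC_6}.
\]
The key inputs are: (i) Voisin's Torelli, giving injectivity on the interior; (ii) a direct identification of the strictly semistable boundary orbits in $\mathcal{M}^{\mathrm{GIT}}$ with the new boundary strata created by Looijenga's contraction along $\calC_2 \cup \calC_6$; (iii) a dimension count together with normality of both sides to promote the dominant birational morphism to an isomorphism. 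Once $\overline{\mathcal{P}}$ is known to be an isomorphism, the theorem follows formally: under $\overline{\mathcal{P}}$, the open locus $\mathcal{M} \subset \mathcal{M}^{\mathrm{GIT}}$ of smooth cubics is sent bijectively onto $\mathcal{D}/\Gamma \setminus (\calC_2 \cup \calC_6)$, since the Heegner divisors $\calC_2$ and $\calC_6$ are precisely the loci into which the non-smooth semistable locus is collapsed.

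The main obstacle, and the most delicate part of the proof, is step (ii): matching the GIT boundary strata with the Looijenga strata. Concretely, one must verify that Looijenga's contraction associated to the pair $(\calC_2,\calC_6)$ collapses these Heegner divisors exactly onto loci parametrising the strictly semistable cubics (the nodal family, contracted to a point inside the image of $\calC_2$, and the chordal cubic, corresponding to the deepest stratum of $\calC_6$), and that no additional boundary strata appear or are missed. This requires carefully relating the combinatorics of the arithmetic boundary of $\mathcal{D}/\Gamma$ (rank-one and rank-two cusps, computed from the lattice $\cublat$) to the types of minimal orbit closures in $\mathcal{M}^{\mathrm{GIT}}$ via semi-stable reduction and a weight-filtration comparison of the limit mixed Hodge structures with the toroidal charts. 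Everything else in the proof — injectivity on the interior and the final extraction of surjectivity onto the complement of $\calC_2 \cup \calC_6$ — follows from Torelli and general properties of Looijenga's compactifications.
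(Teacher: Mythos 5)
The paper does not prove this statement; it is quoted from Looijenga and Laza, so there is no internal proof to compare against. Your sketch does reproduce the correct high-level architecture of their argument (GIT analysis of cubic fourfolds, extension of the period map over cubics with simple singularities, Voisin's Torelli theorem for injectivity, and a comparison of the GIT compactification with a Looijenga semi-toric compactification). However, the step you yourself identify as central --- the matching of boundary strata in (ii) --- is set up incorrectly, and as stated it would not go through.

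The error is that the Looijenga compactification appearing in the Laza--Looijenga comparison is the one associated to the arrangement $\calC_2$ alone (the ``determinantal'' hyperplanes of discriminant $2$), not to $\calC_2+\calC_6$. Cubic fourfolds with a single node have an $A_1$, hence simple, singularity; they are GIT-\emph{stable}, they form a divisor in the GIT quotient, and the extended period map sends them onto an open dense subset of $\calC_6$. So $\calC_6$ is genuinely in the image of the \emph{extended} period map and is not contracted by anything; it is excluded from the image of the period map on \emph{smooth} cubics only because its points are already the periods of nodal cubics, and the extended map is injective. The only stratum that is collapsed is $\calC_2$, which is contracted to the single boundary point of $\mathcal{M}^{\mathrm{GIT}}$ corresponding to the chordal cubic (the secant variety of the Veronese surface), the relevant strictly semistable orbit. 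Your description has these roles essentially interchanged and, worse, treats the nodal family as a collapsed strictly semistable locus. If one actually ran Looijenga's machinery with the arrangement $\calC_2\cup\calC_6$, the resulting compactification would contract a divisor ($\calC_6$) that corresponds to a divisorial stratum of $\mathcal{M}^{\mathrm{GIT}}$, so the dimension count in your step (iii) would fail and no isomorphism $\overline{\mathcal{P}}$ could exist. The correct statement is an isomorphism between $\mathcal{M}^{\mathrm{GIT}}$ and the Looijenga compactification attached to $\calC_2$, restricting to an isomorphism of the simple-singularity locus with $(\mathcal{D}\setminus\calC_2)/\Gamma$, from which the asserted image $\mathcal{D}/\Gamma\setminus(\calC_2\cup\calC_6)$ for smooth cubics follows by removing the nodal divisor.
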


It follows from the surjectivity of the period map that $A(X)_\mathrm{prim}$ contains no vectors of square $2$, nor any vectors of square $6$ and divisibility $3$ in $H^4(X,\Z)_\mathrm{prim}$. For this reason, we define: 

\begin{definition}
    Let $N$ be a lattice. We denote 
    \[
    \mathcal{H}(N)\coloneqq \left\{N'\in \mathcal{G}(N)\mid \nexists v\in N' : v^2 = 2 \text{ or } v^2 = 6 \wedge \dv(v) = 3\right\}.
    \]
\end{definition}

The derived category of a cubic fourfold $X$ admits a semiorthogonal decomposition \cite{Kuz2010}:
\[
\Db(X)\simeq \langle \mathcal{A}_X,\bigo_X, \bigo_X(1), \bigo_X(2)\rangle.
\]
The component $\mathcal{A}_X$ is a K3 category called the \textit{Kuznetsov component}. The topological Grothendieck group $K_\text{top}(\mathcal{A}_X)$ has a natural Hodge structure of K3-type of weight $2$ as defined in \cite{AT2014}. The resulting Hodge lattice is usually denoted $\tilde{H}(\mathcal{A}_X,\Z)$. As a lattice, $\tilde{H}(\mathcal{A}_X,\Z)$ is isometric to the even, unimodular lattice of signature $(20,4)$:
\[
\tilde{\Lambda} \coloneqq U^{\oplus 4}\oplus E_8^{\oplus 2}.
\]
The transcendental lattice $T(X)$ has a natural embedding into $\tilde{H}(\mathcal{A}_X,\Z)$. In fact, $T(X)$ is the \textit{transcendental sublattice} of $\tilde{H}(\mathcal{A}_X,\Z)$ which is a notion that we will define in Section \ref{sec: Hodge realisation}.

We now turn our attention to K3 surfaces. We assume all our K3 surfaces to be projective. If $S$ is a K3 surface, the intersection pairing and Hodge structure turn the cohomology group $H^2(S,\Z)$ into a Hodge lattice of K3-type. The underlying lattice structure of this Hodge lattice is 
\[
\klat \coloneqq U^{\oplus 3}\oplus E_8(-1)^{\oplus 2}.
\]
This is an even unimodular lattice of signature $(3,19)$. The algebraic part of $H^2(S,\Z)$ is called the N\'eron--Severi lattice of $S$, denoted 
\[
    \NS(S) \coloneqq H^{1,1}(S)\cap H^2(S,\Z).
\]
The N\'eron--Severi lattice has signature $(1,\rho-1)$, where $\rho = \rk\NS(S)$ is the \textit{Picard rank} of $S$.
Its orthogonal complement is the \textit{transcendental lattice} 
\[
T(S) \coloneqq \NS(S)^\perp\subset H^2(S,\Z).
\]

Similarly to cubic fourfolds, the Torelli Theorem for K3 surfaces states that two K3 surfaces $S$ and $S'$ are isomorphic if and only if the cohomology groups $H^2(S,\Z)$ and $H^2(S',\Z)$ are Hodge isometric.

For a K3 surface $S$, we consider its full cohomology group $H^*(S,\Z) \simeq H^0(S,\Z) \oplus H^2(S,\Z) \oplus H^4(S,\Z)$. Via the natural isomorphisms $H^0(S,\Z) \simeq \Z$ and $H^4(S,\Z) \simeq \Z$, we usually write an element of $H^*(S,\Z)$ as $(r,\ell,s)$, where $r,s\in \Z$ and $\ell\in H^2(S,\Z)$. We can put a lattice structure on $H^*(S,\Z)$ by the following definition:
\[
(r,\ell,s)\cdot (r',\ell',s') = -2(rs'+r's) + \ell\cdot \ell'.
\]
In other words, we declare that $ H^0(S,\Z) \oplus H^4(S,\Z)$ gets the structure of a hyperbolic plane $U$ which is orthogonal to $H^2(S,\Z)$. 
There is also a natural Hodge structure on $H^*(S,\Z)$ which it inherits from $H^2(S,\Z)$. With this Hodge structure and lattice structure, we denote this Hodge lattice by $\tilde{H}(S,\Z)$ and call it the \textit{Mukai lattice} of $S$. 

We recall the Derived Torelli Theorem by Mukai and Orlov, which is the most fundamental result regarding derived equivalence for K3 surfaces:
\begin{theorem}[Derived Torelli Theorem for K3 surfaces] \cite{Muk87,Orl03} \label{thm: derived torelli for k3s}
    Let $S$ and $S'$ be K3 surfaces. The following are equivalent:
    \begin{enumerate}
        \item $S$ and $S'$ are Fourier--Mukai partners, that is, there is an equivalence $\Db(S)\simeq \Db(S')$.
        \item There is a Hodge isometry $T(S)\simeq T(S')$.
        \item There is a Hodge isometry $\tilde{H}(S,\Z) \simeq \tilde{H}(S',\Z)$.
    \end{enumerate}
\end{theorem}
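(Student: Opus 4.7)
The plan is to prove the chain of implications $(1)\Rightarrow(3)\Rightarrow(2)\Rightarrow(1)$. The equivalence $(3)\Rightarrow(2)$ is essentially formal: the transcendental lattice $T(S)\subset \tilde{H}(S,\Z)$ is characterised intrinsically as the smallest primitive sublattice whose complexification contains $H^{2,0}(S)$, or equivalently as the orthogonal complement of the $(1,1)$-part intersected with the integral lattice. Since a Hodge isometry of Mukai lattices preserves both the lattice structure and the Hodge decomposition, it restricts to a Hodge isometry between the transcendental sublattices.

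For $(1)\Rightarrow(3)$, I would use Orlov's representability theorem to write any equivalence $\Phi\colon \Db(S)\simeq \Db(S')$ as a Fourier--Mukai transform with some kernel $E\in \Db(S\times S')$. Then the induced cohomological action is given by the correspondence $v(E) = \operatorname{ch}(E)\sqrt{\operatorname{td}(S\times S')}$, i.e.\ the Mukai vector of the kernel. Using Grothendieck--Riemann--Roch and Serre duality on $S\times S'$, one verifies that this induced map on $\tilde{H}(-,\Z)$ preserves the Mukai pairing and the Hodge structure, giving the desired Hodge isometry.

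The main obstacle is $(2)\Rightarrow(1)$, which is the substantive part of Mukai and Orlov's theorem. Starting from a Hodge isometry $\varphi\colon T(S)\simeq T(S')$, the first step is to extend $\varphi$ to a Hodge isometry $\tilde\varphi\colon \tilde{H}(S,\Z)\simeq \tilde{H}(S',\Z)$ of Mukai lattices. This uses Nikulin's theory of primitive embeddings into even unimodular lattices (Lemma \ref{lem: primitive embeddings rosetta stone}): both Mukai lattices are isometric to $U^{\oplus 4}\oplus E_8(-1)^{\oplus 2}$, and the abundance of hyperbolic summands guarantees enough flexibility in the algebraic part to extend $\varphi$.

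The second step is to realise $\tilde\varphi$ geometrically. Choose a primitive isotropic Mukai vector $v\in \tilde{H}(S,\Z)$ with $\tilde\varphi(v) = (0,0,1)\in \tilde{H}(S',\Z)$ (after possibly composing with an autoequivalence of $\Db(S)$ induced by spherical twists, shifts, and line bundle twists). Then consider the moduli space $M_v(S)$ of $v$-stable sheaves on $S$. By Mukai's theorem on moduli of sheaves on K3 surfaces, $M_v(S)$ is a K3 surface whose Hodge structure is identified with $v^\perp/v$ inside $\tilde{H}(S,\Z)$, and the universal family (or a twisted universal family, if a Brauer obstruction appears) on $S\times M_v(S)$ provides a Fourier--Mukai equivalence $\Db(S)\simeq \Db(M_v(S))$. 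The Hodge-theoretic description then forces $M_v(S)\simeq S'$ by the classical Torelli theorem, completing the proof. The delicate point is handling the Brauer-class obstruction to the existence of a universal sheaf, which in the untwisted case is automatic and in the twisted case is resolved by C\u{a}ld\u{a}raru's work on twisted Fourier--Mukai transforms.
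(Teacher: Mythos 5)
The paper does not prove Theorem \ref{thm: derived torelli for k3s}; it is quoted as a classical result of Mukai and Orlov, so there is no internal proof to compare against. Your sketch is the standard Mukai--Orlov argument and is correct in outline: $(1)\Rightarrow(3)$ via Orlov representability and the Mukai vector of the kernel, $(3)\Rightarrow(2)$ by restriction to the intrinsically defined transcendental sublattice, and $(2)\Rightarrow(1)$ by first extending the isometry to the Mukai lattices using Nikulin (the key point being that $T(S)^\perp\subset\tilde{H}(S,\Z)$ is $\NS(S)\oplus U$, which contains a hyperbolic summand, so isometries of the discriminant form lift) and then realising $\tilde\varphi^{-1}(0,0,1)$ as a primitive isotropic Mukai vector $v$ whose moduli space $M_v(S)$ is a K3 surface Hodge-isometric to $S'$. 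One small correction: in this untwisted setting you never need C\u{a}ld\u{a}raru's twisted machinery, because $w:=\tilde\varphi^{-1}(1,0,0)$ is an algebraic class with $(v,w)=\pm1$, and Mukai's criterion then guarantees that $M_v(S)$ is a \emph{fine} moduli space, so the Brauer obstruction vanishes for free; the twisted theory only becomes necessary for the analogous statement about twisted K3 surfaces (as in Theorem \ref{thm: known derived torelli statements}).
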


Returning to cubic fourfolds, it is known for several families of special cubic fourfolds that the Kuznetsov component is explicitly realised as the derived category of a (possibly twisted) K3 surface: for cubic fourfolds containing a plane this goes back to \cite{Kuz2010} and has been extended to the non-generic nodal case in \cite{Mos2018}. Recently this has been studied for cubic fourfolds containing multiple planes in \cite{Hart2025}. However, it is currently an open conjecture that Theorem \ref{thm: derived torelli for k3s} has an analogue for the Kuznetsov components of cubic fourfolds.
That is, for two cubic fourfolds $X, Y$, it is currently not known whether 
there is an equivalence $\mathcal{A}_X\simeq \mathcal{A}_Y$ if there is a
Hodge isometry $\tilde{H}(\mathcal{A}_X,\Z) \simeq \tilde{H}(\mathcal{A}_Y,\Z)$. There is the following positive result, however:

\begin{theorem}  \cite{HS05,AT2014} \label{thm: known derived torelli statements}
    Let $X$ and $Y$ be cubic fourfolds. Consider the following three statements:
    \begin{enumerate}
        \item There is a Fourier--Mukai equivalence  $\mathcal{A}_X\simeq \mathcal{A}_Y$.
        \item There is a Hodge isometry $\tilde{H}(\mathcal{A}_X,\Z) \simeq \tilde{H}(\mathcal{A}_Y,\Z)$.
        \item There is a Hodge isometry $T(X)\simeq T(Y)$.
    \end{enumerate}
    Then we have 
    \[
    (1) \implies (2) \implies (3).
    \]
    Moreover, suppose that there is a twisted K3 surface $(S,\alpha)$ and a Hodge isometry $\tilde{H}(\mathcal{A}_X,\Z)(-1) \simeq \tilde{H}(S,\alpha,\Z)$. Then in addition, we have $(2)\implies (1).$
\end{theorem}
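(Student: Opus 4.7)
The plan is to handle the three implications separately, following the Mukai--Orlov paradigm transferred to the Kuznetsov component by Addington--Thomas, and then deduce the final statement by appealing to the Huybrechts--Stellari twisted derived Torelli theorem.

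For $(1) \Rightarrow (2)$, I would mimic the classical argument for K3 surfaces. Any exact equivalence $\Phi\colon\mathcal{A}_X \simeq \mathcal{A}_Y$ between admissible subcategories of smooth projective varieties is of Fourier--Mukai type, with kernel $E \in \Db(X \times Y)$. This kernel induces a map $\Phi^E_*$ on topological $K$-theory compatible with projection/pullback in the usual way, which is an isomorphism since $\Phi$ is an equivalence. That $\Phi^E_*$ preserves the Mukai pairing is the Grothendieck--Riemann--Roch/Serre-duality calculation familiar from the K3 setting; that it is compatible with the weight--$2$ Hodge structure is essentially built into the definition of $\tilde H(\mathcal{A}_X,\Z)$ by Addington--Thomas, who tailored it precisely so that cohomological Fourier--Mukai transforms become Hodge isometries.

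For $(2) \Rightarrow (3)$, one uses that $T(X)$ is by definition the transcendental sublattice of $\tilde H(\mathcal{A}_X,\Z)$, i.e.\ the smallest primitive sublattice whose complexification contains the one--dimensional $(2,0)$-part, equivalently the orthogonal complement of the algebraic part inside $\tilde H(\mathcal{A}_X,\Z)$. Any Hodge isometry $\tilde H(\mathcal{A}_X,\Z) \simeq \tilde H(\mathcal{A}_Y,\Z)$ preserves the Hodge decomposition and the lattice structure, hence carries the transcendental sublattice on one side isomorphically onto the transcendental sublattice on the other, giving the required Hodge isometry $T(X) \simeq T(Y)$.

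For the partial converse $(2) \Rightarrow (1)$ under the twisted K3 hypothesis, I would invoke the twisted Derived Torelli Theorem of Huybrechts--Stellari (the twisted analogue of Theorem~\ref{thm: derived torelli for k3s}): a Hodge isometry of twisted Mukai lattices $\tilde H(S,\alpha,\Z) \simeq \tilde H(S',\alpha',\Z)$ is induced by a Fourier--Mukai equivalence $\Db(S,\alpha) \simeq \Db(S',\alpha')$. Applied to the given Hodge isometry $\tilde H(\mathcal{A}_X,\Z)(-1) \simeq \tilde H(S,\alpha,\Z)$, this produces an equivalence $\mathcal{A}_X \simeq \Db(S,\alpha)$. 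Composing the hypothesised isometry of Mukai lattices in $(2)$ with this one gives a Hodge isometry $\tilde H(\mathcal{A}_Y,\Z)(-1) \simeq \tilde H(S,\alpha,\Z)$, to which the same theorem applies to produce $\mathcal{A}_Y \simeq \Db(S,\alpha)$. Concatenation yields the desired Fourier--Mukai equivalence $\mathcal{A}_X \simeq \mathcal{A}_Y$.

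The main obstacle, and the reason the converse is only partial, is the last step: the lifting of a Hodge isometry to a twisted Fourier--Mukai equivalence is the nontrivial content of Huybrechts--Stellari, and the hypothesis that $\mathcal{A}_X$ is equivalent to some $\Db(S,\alpha)$ is essential to funnel the problem into their framework. Without a twisted K3 partner for $\mathcal{A}_X$, the implication $(2) \Rightarrow (1)$ remains conjectural (cf.\ the open analogue of Theorem~\ref{thm: derived torelli for k3s} for cubic fourfolds discussed in the preceding paragraphs).
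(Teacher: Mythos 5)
The paper offers no proof of this theorem; it is recalled from \cite{HS05,AT2014} (and Huybrechts' subsequent work on the K3 category of a cubic fourfold). Your reconstructions of $(1)\Rightarrow(2)$ (cohomological Fourier--Mukai transform on $K_{\mathrm{top}}$ preserving the Mukai pairing and the Addington--Thomas Hodge structure) and of $(2)\Rightarrow(3)$ (functoriality of the transcendental sublattice under Hodge isometries, together with the identification of $T(X)$ with the transcendental part of $\tilde H(\mathcal{A}_X,\Z)$) match the intended sources and are fine.

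The gap is in $(2)\Rightarrow(1)$. The Huybrechts--Stellari twisted Derived Torelli Theorem is a statement about \emph{two twisted K3 surfaces}: an (orientation-preserving) Hodge isometry $\tilde H(S,\alpha,\Z)\simeq\tilde H(S',\alpha',\Z)$ lifts to an equivalence $\Db(S,\alpha)\simeq\Db(S',\alpha')$. You cannot ``apply'' it to the given Hodge isometry $\tilde H(\mathcal{A}_X,\Z)(-1)\simeq\tilde H(S,\alpha,\Z)$, because the left-hand side is the Mukai lattice of a Kuznetsov component, not of a twisted K3 surface; there is no kernel living on a product of surfaces to produce. The passage from the purely Hodge-theoretic hypothesis to the categorical statement $\mathcal{A}_X\simeq\Db(S,\alpha)$ is precisely the hard content here --- it is a separate theorem of Huybrechts (the twisted extension of Addington--Thomas), proved by a deformation argument along the Hassett divisors, not a corollary of \cite{HS05}. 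Your closing remark even concedes that ``the hypothesis that $\mathcal{A}_X$ is equivalent to some $\Db(S,\alpha)$ is essential,'' but that is \emph{not} the hypothesis of the theorem: only a Hodge isometry is assumed, and you have silently conflated the two. Once $\mathcal{A}_X\simeq\Db(S,\alpha)$ and $\mathcal{A}_Y\simeq\Db(S',\alpha')$ are actually established, your concatenation via \cite{HS05} does finish the argument, modulo the standard orientation issue (only orientation-preserving Hodge isometries are known to lift, so one must check that the isometry in $(2)$ can be corrected by an orientation-reversing Hodge isometry), which you should at least acknowledge.
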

\subsection{Hodge realisation}
\label{sec: Hodge realisation}
We briefly recall the constructions of \cite{Efi18,Mei24,Mei25}.
We denote by $\operatorname{HS}_\Z$ the category of pure, graded, polarisable, integral Hodge structures. The \textit{Grothendieck group of Hodge structures} is defined to be the group 
\[
\gghsf \coloneqq \Z[\operatorname{HS}_\Z/_\simeq]\big/([H_1\oplus H_2] - [H_1] - [H_2]).
\]
There is a \textit{Hodge realisation} map 
\[
\operatorname{Hdg}_\Z \colon \gravc \to \gghsf
\]
defined by sending the class of a smooth, projective variety $[X]$ to the class of its full cohomology group $[H^\bullet(X,\Z)]$, considered with its Hodge structure.

\begin{remark}
    The Hodge realisation map factors through $\gravc[\LL^{-1}]$. In other words, if $X$ and $Y$ are smooth, projective, $\LL$-equivalent varieties, then we have $[H^\bullet(X,\Z)] = [H^\bullet(Y,\Z)]$ in $\gghsf$. 
\end{remark}

For any $n\in \Z$, we have a full subcategory $\operatorname{HS}_{\Z,n}$ of $\operatorname{HS}_\Z$, consisting of pure, polarisable, integral Hodge structures of weight $n$. This induces group homomorphisms for each $n$:
\[
\functionstar{\gghsf}{\gghsn}{\text{[}H^\bullet]}{[H^n].}
\]

Suppose we have $H\in \operatorname{HS}_{\Z,2n}$ a Hodge structure of weight $2n$, then the Tate twist $H(n-1)$ is a Hodge structure of weight $2$. This yields an isomorphism 
\[
K_0^\oplus(\operatorname{HS}_{\Z,2n}) \simeq \gghs.
\]

For a Hodge structure $H\in \operatorname{HS}_{\Z,2}$ of weight $2$, we denote by $T(H)$ the transcendental sub-Hodge structure of $H$. That is, $T(H)$ is the minimal, integral, primitive sub-Hodge structure with the property that $H^{2,0}\subset T(H)_\CC$. Moreover, we write
\[
\NS(H)\coloneqq H^{1,1}\cap H. 
\]

\begin{example}
    If $X$ is a hyperk\"ahler manifold, then $$\NS(H^2(X,\Z)) = \NS(X), \qquad T(H^2(X,\Z)) = T(X).$$
    If $Y$ is a cubic fourfold, then $\NS(H^4(Y,\Z)(1)) = A(Y)$ and $T(H^4(Y,\Z)(1)) = T(Y)(1)$.
\end{example}

\begin{definition}
    For $H\in \operatorname{HS}_{\Z,2}$, we call the group 
    \[
    G(H)\coloneqq \frac{H}{\NS(H)\oplus T(H)}
    \]
    the \textit{gluing group} of $H$.
\end{definition}

\begin{lemma}\cite{Mei24,Mei25} \label{lem: gluing group homomorphism}
    There is a group homomorphism 
    \[
    \functionstar{\gghs}{\Q^\times}{\normalfont{[}H]}{\left|G(H)_{\text{tors}}\right|.}
    \]
\end{lemma}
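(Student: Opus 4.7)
The plan is as follows. Since $\gghs$ is the Grothendieck group of the commutative monoid of weight-$2$ polarisable integral Hodge structures under direct sum, and $\Q^\times$ is a group, the universal property of Grothendieck groups reduces the statement to verifying that the rule $[H]\mapsto |G(H)_{\mathrm{tors}}|$ is multiplicative on direct sums: for any two weight-$2$ Hodge structures $H_1,H_2$, one needs
\[
\bigl|G(H_1\oplus H_2)_{\mathrm{tors}}\bigr|=\bigl|G(H_1)_{\mathrm{tors}}\bigr|\cdot\bigl|G(H_2)_{\mathrm{tors}}\bigr|.
\]
I would establish the stronger identity $G(H_1\oplus H_2)\simeq G(H_1)\oplus G(H_2)$, from which the torsion statement is immediate.

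The identity $\NS(H_1\oplus H_2)=\NS(H_1)\oplus\NS(H_2)$ is automatic from the splitting $(H_1\oplus H_2)^{1,1}=H_1^{1,1}\oplus H_2^{1,1}$. The content of the lemma lies in the analogous identity for transcendental parts,
\[
T(H_1\oplus H_2)=T(H_1)\oplus T(H_2).
\]
One inclusion is easy: the right-hand side is a primitive integral sub-Hodge structure of $H_1\oplus H_2$, since each $T(H_i)$ is primitive in $H_i$, and its complexification contains $H_1^{2,0}\oplus H_2^{2,0}=(H_1\oplus H_2)^{2,0}$. Hence by minimality, $T(H_1\oplus H_2)\subseteq T(H_1)\oplus T(H_2)$.

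For the reverse containment I would argue rationally first and then pass to integral primitive hulls. Let $V\subseteq(H_1\oplus H_2)_\Q$ be any rational sub-Hodge structure with $V_\CC\supseteq H_1^{2,0}\oplus H_2^{2,0}$. Via the natural inclusions $H_i\hookrightarrow H_1\oplus H_2$, set $V_i\coloneqq V\cap (H_i)_\Q$. This is a rational sub-Hodge structure of $(H_i)_\Q$, and since complexification commutes with the intersection of rational subspaces, $(V_i)_\CC=V_\CC\cap(H_i)_\CC\supseteq H_i^{2,0}$. The minimality of $T(H_i)$ (applied to the primitive hull of $V_i$ inside $H_i$) then forces $V_i\supseteq T(H_i)\otimes\Q$, and hence $V\supseteq T(H_1)\otimes\Q\oplus T(H_2)\otimes\Q$. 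Taking $V=T(H_1\oplus H_2)\otimes\Q$ and then passing to integral primitive hulls — which are preserved by the direct sum on the right since $T(H_1)\oplus T(H_2)$ is already primitive in $H_1\oplus H_2$ — yields the reverse containment.

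The main obstacle is exactly this decomposition of $T$: a priori one might worry that a primitive sub-Hodge structure of $H_1\oplus H_2$ containing the full $(2,0)$-part could be strictly smaller than $T(H_1)\oplus T(H_2)$ by mixing the two summands via, say, the graph of a rational Hodge isomorphism between sub-Hodge structures of $H_1$ and $H_2$. The projection/intersection argument above is tailored to rule this out, crucially using that $(H_1\oplus H_2)^{2,0}$ contains both $H_1^{2,0}\oplus 0$ and $0\oplus H_2^{2,0}$ as independent subspaces, not merely a diagonal copy. Granting both decompositions, one concludes
\[
G(H_1\oplus H_2)=\frac{H_1\oplus H_2}{(\NS(H_1)\oplus T(H_1))\oplus(\NS(H_2)\oplus T(H_2))}\simeq G(H_1)\oplus G(H_2),
\]
and extracting torsion subgroups gives the desired multiplicativity.
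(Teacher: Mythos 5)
Your argument is correct: since the only relations in $\gghs$ are $[H_1\oplus H_2]=[H_1]+[H_2]$, the lemma reduces to multiplicativity of $|G(-)_{\mathrm{tors}}|$ under direct sums, and your intersection argument $V_i=V\cap (H_i)_\Q$ correctly establishes $T(H_1\oplus H_2)=T(H_1)\oplus T(H_2)$ (the only delicate point), giving $G(H_1\oplus H_2)\simeq G(H_1)\oplus G(H_2)$. The paper itself gives no proof, citing \cite{Mei24,Mei25}, and your route is essentially the one taken there, so there is nothing further to compare.
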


\begin{lemma} \label{lem: L equivalent cubic fourfolds have equal discriminants}
    If $X$ is a cubic fourfold, then we have 
    \[
    |G(H^4(X,\Z)(1))| = \disc(T(X)).
    \]
\end{lemma}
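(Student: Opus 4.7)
The plan is to identify $G(H^4(X,\Z)(1))$ with the gluing group studied in Lemma \ref{lem: gluing group orders} and then apply that lemma. By definition, $G(H)$ for $H \in \operatorname{HS}_{\Z,2}$ is the quotient $H/(\NS(H)\oplus T(H))$, and the Tate twist affects only the Hodge filtration, not the underlying abelian group. Hence the summands appearing in the definition are, for $H = H^4(X,\Z)(1)$, exactly $\NS(H) = A(X)$ and $T(H) = T(X)$; by Definition \ref{defn:transcendental lattice of cubic 4fold}, these two sublattices are orthogonal complements of one another in $H^4(X,\Z)$, and both are primitive. Therefore $G(H^4(X,\Z)(1))$ coincides with the gluing group $G = H^4(X,\Z)/(A(X)\oplus T(X))$ in the sense of the discussion preceding Lemma \ref{lem: gluing group orders}.

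Applying Lemma \ref{lem: gluing group orders} with $N = A(X)$, $T = T(X)$, $L = H^4(X,\Z)$ gives
\[
\disc(A(X))\,\disc(T(X)) = |G|^2\,\disc(H^4(X,\Z)).
\]
The lattice $H^4(X,\Z) \simeq \fullcublat = E_8^{\oplus 2}\oplus U^{\oplus 2}\oplus \langle 1\rangle^{\oplus 3}$ is unimodular, so $\disc(H^4(X,\Z)) = 1$. Moreover, when the ambient lattice is unimodular, the two natural injections $i_{A(X)}\colon G \hookrightarrow A_{A(X)}$ and $i_{T(X)}\colon G \hookrightarrow A_{T(X)}$ are isomorphisms; this may be read directly off the chain $A(X)\oplus T(X)\hookrightarrow L = L^\vee \hookrightarrow A(X)^\vee\oplus T(X)^\vee$ by comparing successive indices. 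In particular $\disc(A(X)) = \disc(T(X)) = |G|$, which yields $|G(H^4(X,\Z)(1))| = \disc(T(X))$.

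The argument is essentially a one-line application of Lemma \ref{lem: gluing group orders} once the unimodularity of $H^4(X,\Z)$ is invoked, so I do not anticipate any real obstacle; the only point requiring a moment of care is the identification of the gluing group of the Hodge structure with the lattice-theoretic gluing group, which is immediate from the fact that the Tate twist is purely a Hodge-theoretic operation.
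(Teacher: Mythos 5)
Your proof is correct and rests on the same key fact as the paper's: $T(X)$ (equivalently $A(X)$) is a primitive sublattice of the unimodular lattice $H^4(X,\Z)$, so the gluing group injects into $A_{T(X)}$ and the index count $\disc(A(X))\disc(T(X)) = |G|^2$ forces $|G| = \disc(T(X))$. The paper's proof is a one-liner citing this same observation (via Remark \ref{rem: pair inducing a primitive embedding}), so the two arguments are essentially identical.
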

\begin{proof}
    The transcendental lattice $T(X)$ is a primitive sublattice of the unimodular lattice $H^4(X,\Z)$, hence the gluing group is isomorphic to $A_{T(X)}$ by Remark \ref{rem: pair inducing a primitive embedding}.
\end{proof}

\begin{lemma}\label{lem: coarseness of a fano variety}
    Let $F(X)$ be the Fano variety of lines on a cubic fourfold $X$. Then we have 
    \[
    |G(F(X))| = \disc(T(F(X))) = \disc(T(X)).
    \]
\end{lemma}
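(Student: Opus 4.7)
The plan is to establish the two equalities separately. For the second equality $\disc(T(F(X))) = \disc(T(X))$, I would invoke the Beauville--Donagi theorem: the Fano correspondence (the universal line on $F(X)\times X$) induces a Hodge isometry, up to a Tate twist and sign, $H^4(X,\Z)_{\mathrm{prim}}(1) \simeq H^2(F(X),\Z)_{\mathrm{prim}}$. Being a Hodge isometry, it restricts to an isometry $T(X) \simeq T(F(X))$ between the transcendental sublattices, so their discriminants agree.

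For the first equality $|G(F(X))| = \disc(T(F(X)))$, the argument cannot be as direct as in Lemma \ref{lem: L equivalent cubic fourfolds have equal discriminants}: the lattice $H^2(F(X),\Z)$ with the Beauville--Bogomolov--Fujiki form is not unimodular, but of discriminant $2$ (as $F(X)$ is of $\operatorname{K3}^{[2]}$-type). I would apply Lemma \ref{lem: gluing group orders} to the primitive embedding $T(F(X)) \hookrightarrow H^2(F(X),\Z)$ with orthogonal complement $\NS(F(X))$ to obtain
\[
\disc(\NS(F(X))) \cdot \disc(T(F(X))) = 2\,|G(F(X))|^2,
\]
so the desired equality is equivalent to the discriminant identity $\disc(\NS(F(X))) = 2\disc(T(X))$.

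To establish this identity, I would compare two parallel overlattice chains: on the cubic fourfold side, $A(X)_{\mathrm{prim}} \oplus \langle H^2\rangle \hookrightarrow A(X) \hookrightarrow H^4(X,\Z)$ with $H^4(X,\Z)$ unimodular (giving $\disc(A(X)) = \disc(T(X))$); on the Fano side, $\NS(F(X))_{\mathrm{prim}} \oplus \langle g\rangle \hookrightarrow \NS(F(X)) \hookrightarrow H^2(F(X),\Z)$, where $g$ denotes the Plücker polarisation with $g^2 = 6$. Via Abel--Jacobi, $\NS(F(X))_{\mathrm{prim}} \simeq A(X)_{\mathrm{prim}}$, while the rank-one summands $\langle H^2\rangle$ and $\langle g\rangle$ have discriminants $3$ and $6$ respectively. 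Applying Lemma \ref{lem: gluing group orders} twice, the identity reduces to showing $k_X = k_F$, where
\[
k_X = [A(X) : A(X)_{\mathrm{prim}} \oplus \langle H^2\rangle], \qquad k_F = [\NS(F(X)) : \NS(F(X))_{\mathrm{prim}} \oplus \langle g\rangle].
\]
Both are divisors of $3$, since they inject into the order-$3$ global gluing groups $H^4(X,\Z)/(H^4(X,\Z)_{\mathrm{prim}}\oplus\langle H^2\rangle)$ and $H^2(F(X),\Z)/(H^2(F(X),\Z)_{\mathrm{prim}}\oplus\langle g\rangle)$ (these orders follow from Lemma \ref{lem: gluing group orders} applied to the full primitive cohomology, which has discriminant $3$ on both sides).

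The main obstacle is the comparison $k_X = k_F$. The non-triviality of $k_X$ amounts to the existence of an algebraic glue class of order $3$, i.e., an element of $A(X)_{\mathrm{prim}}\otimes\Q$ whose Tate-twist combined with $H^2/3$ lies in $A(X)$. Since the Abel--Jacobi map is a Hodge isometry onto the primitive parts and the corresponding global gluing classes are determined intrinsically by the underlying K3-type Hodge structure, the presence of such an algebraic glue class on one side should force it on the other. I expect this step to require a careful tracking of how the rational classes $H^2/3$ and $g/3$ (modulo primitive cohomology) correspond under the correspondence induced by the universal line, and is the main technical point.
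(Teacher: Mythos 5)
Your reduction is sound as far as it goes: the second equality via Beauville--Donagi is fine, and applying Lemma \ref{lem: gluing group orders} to $T(F(X))\hookrightarrow H^2(F(X),\Z)$ correctly shows that $|G(F(X))|=\disc(T(F(X)))$ is equivalent to $\disc(\NS(F(X)))=2\disc(T(X))$, which you correctly reduce further to $k_X=k_F$ (and your claim that both indices divide $3$ checks out). But that last step is where all the content of the lemma sits, and you do not prove it --- you only assert that the order-$3$ glue class ``should'' transfer from one side to the other. This is a genuine gap, not a routine verification: $k_X\in\{1,3\}$ genuinely varies with $X$ (it equals $3$ exactly when $A(X)$ contains a class whose pairing with $H^2$ is prime to $3$, e.g.\ for a cubic containing a plane, and equals $1$ for the very general cubic), so $k_X=k_F$ is not a formal consequence of $\NS(F(X))_{\mathrm{prim}}\simeq A(X)_{\mathrm{prim}}$ together with $H^2\mapsto g$ on the rank-one parts. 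To close the gap you would need the \emph{integral} form of Beauville--Donagi: the Abel--Jacobi map is an isomorphism of Hodge structures $H^4(X,\Z)\to H^2(F(X),\Z)$ (not just on primitive parts) sending $H^2$ to $g$; granting this, it carries $A(X)$ onto $\NS(F(X))$ and $A(X)_{\mathrm{prim}}\oplus\langle H^2\rangle$ onto $\NS(F(X))_{\mathrm{prim}}\oplus\langle g\rangle$, whence $k_X=k_F$. As written, the proposal stops exactly at the point that needs an argument.

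For comparison, the paper's proof takes a different and much shorter route that avoids this bookkeeping entirely: by \cite[Corollary 8]{Add16}, $H^2(F(X),\Z)\simeq(\lambda_1^\perp)(-1)$ inside the \emph{unimodular} Mukai lattice $\tilde{H}(\mathcal{A}_X,\Z)(-1)$ with $\lambda_1$ of divisibility $1$; this gives $\crs(F(X))=1$, and then $|G(F(X))|=\disc(T(F(X)))$ is immediate from \cite[Theorem 3.5]{Mei25}, while the same embedding identifies $T(F(X))$ with $T(X)$ up to sign. Your approach, if completed with the integral Abel--Jacobi input above, would be more self-contained (no appeal to the coarseness machinery of \cite{Mei25} or to the Mukai lattice), but at present it is incomplete at its crucial step.
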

\begin{proof}
    Addington showed in \cite[Corollary 8]{Add16} that there is a natural embedding $A_2 = \langle \lambda_1,\lambda_2\rangle\subset \tilde{H}(\mathcal{A}_X,\Z)$ such that 
    \begin{equation}\label{eq: natural embedding of H2 of the fano}
        H^2(F(X),\Z) \simeq (\lambda_1^\perp)(-1) \subset \tilde{H}(\mathcal{A}_X,\Z)(-1), 
    \end{equation}
    where the $(-1)$ indicates that we rescaled the bilinear forms by $-1$. Since the divisibility of $\lambda_1$ in $\tilde{H}(\mathcal{A}_X,\Z)$ is $1$, it follows that the \textit{coarseness} of $F(X)$ is $\crs(F(X)) = 1$ (see \cite[Definition 2.4]{Mei25} for the definition of coarseness). 
    Therefore, by \cite[Theorem 3.5]{Mei25}, we find 
    \[
    |G(F(X))| = \frac{\disc(T(F(X)))}{\crs(F(X))} = \disc(T(F(X))),
    \]
    as required. Moreover, by \eqref{eq: natural embedding of H2 of the fano}, $T(F(X))$ is Hodge anti-isometric to the transcendental sublattice of $\tilde{H}(\mathcal{A}_X,\Z)$, which is $T(X)$, hence $\disc(T(F(X))) = \disc(T(X))$. 
\end{proof}

Suppose that there is a K3 surface $S$ with a Hodge isometry $T(X)(-1)\simeq T(S)$, so that $F(X)$ is a moduli space of stable sheaves on $S$ by \cite[Corollary 9.6]{Bec22}. Then Lemma \ref{lem: coarseness of a fano variety} is equivalent to saying that $F(X)$ is a fine moduli space of sheaves on $S$ by \cite[Theorem 3.9]{MM24}.

The following result appears as Lemma 3.2 in \cite{BB2025}.
\begin{theorem} \label{thm: L equivalence implies Hodge isometry}
    Let $X$ and $Y$ be cubic fourfolds (resp. K3 surfaces). Suppose $\rk(T(X))\neq 4$, and that $\End_{\mathrm{Hodge}}(T(X)) \simeq \Z$. Then, if $[T(X)]=[T(Y)]$ in $\gghsf$, there is a Hodge isometry $T(X)\simeq T(Y)$.  
\end{theorem}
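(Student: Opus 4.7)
The plan is to combine three ingredients: the explicit description of equality in the Grothendieck group of an additive category, the semisimplicity of rational polarisable Hodge structures, and a rigidity statement for integral lattices sitting inside a fixed rational Hodge vector space. The final step, handling the passage from a rational Hodge isomorphism to an integral Hodge isometry, is where the rank hypothesis enters and is the main obstacle.

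\textbf{Step 1 (rational cancellation).} By the standard description of $K_0^\oplus$ of an additive category, the hypothesis $[T(X)] = [T(Y)]$ in $\gghsf$ yields an integral polarisable Hodge structure $M$ and an isomorphism of integral Hodge structures
\[
T(X) \oplus M \simeq T(Y) \oplus M.
\]
Tensoring with $\Q$, we pass to the category $\mathrm{HS}_\Q$ of rational polarisable Hodge structures, which is semisimple. The assumption $\End_{\mathrm{Hodge}}(T(X)) \simeq \Z$ forces $\End_{\mathrm{HS}_\Q}(T(X)_\Q) \simeq \Q$, so $T(X)_\Q$ is a simple object. Krull--Schmidt in a semisimple $\Q$-linear category then permits cancellation of the summand $M_\Q$, producing a Hodge isomorphism
\[
\varphi\colon T(X)_\Q \xrightarrow{\sim} T(Y)_\Q.
\]

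\textbf{Step 2 (matching the bilinear forms).} Using $\varphi$, identify the ambient rational Hodge vector spaces, and view $T(X)$ and $T(Y)$ as two $\Z$-lattices inside the same simple rational Hodge structure $V := T(X)_\Q$. Simplicity of $V$ means that the space of $\Q$-valued Hodge-invariant symmetric bilinear pairings on $V$ is one-dimensional, so the two forms pulled back to $V$ differ by a scalar $\lambda \in \Q^\times$. Rank and signature are preserved since $\varphi$ is a Hodge isomorphism, and the absolute value of the discriminant is an invariant computable from the class in $\gghsf$ via the homomorphism of Lemma~\ref{lem: gluing group homomorphism} (applied after re-embedding). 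Reconciling these constraints pins down $\lambda$ up to sign, and the K3-type condition together with polarisability fixes the sign, so $\varphi$ can be rescaled to an \emph{isometric} Hodge isomorphism of rational quadratic spaces.

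\textbf{Step 3 (integral upgrade, the hard part).} It remains to upgrade this rational Hodge isometry to an integral one. The lattices $T(X)$ and $T(Y)$ inside $V$ now have identical invariants (rank, signature, bilinear form up to rational isometry), so they belong to a common genus. The hypothesis $\End_{\mathrm{Hodge}}(T(X)) \simeq \Z$ is used here in its full strength: it ensures that the Hodge-invariant isometries of $V$ act transitively on the set of integral Hodge-lattice structures inside $V$ sharing the given genus. This transitivity is the delicate point and fails precisely in the rank $4$ (signature $(2,2)$) setting, where indefinite binary--type phenomena and extra spinor/genus classes not related by Hodge-respecting isometries obstruct the argument; hence the assumption $\rk T(X) \neq 4$. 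Excluding this case, one concludes that $T(X)$ and $T(Y)$ are carried onto one another by a Hodge isometry, as required.
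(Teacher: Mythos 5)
Your proposal diverges from the paper's proof and contains genuine gaps at the two places where the real work happens.

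The paper's proof is short: it applies \cite[Lemma 3.7]{Efi18} to get an isomorphism of \emph{integral} Hodge structures $T(X)\simeq T(Y)$ directly from $[T(X)]=[T(Y)]$ and $\End_{\mathrm{Hodge}}(T(X))\simeq\Z$, then uses equality of discriminants together with \cite[Corollary 3.10]{Mei24} to upgrade this to a Hodge isometry (the K3 case being \cite[Proposition 3.11]{Mei24}). Your Step 1 only achieves cancellation after tensoring with $\Q$, which yields a \emph{rational} Hodge isomorphism. That is strictly weaker: two lattices spanning the same simple rational Hodge structure need not be isomorphic as integral Hodge structures, and passing from the rational to the integral statement is precisely the nontrivial content of Efimov's lemma (which exploits $\End=\Z$ for an integral cancellation, not merely semisimplicity of $\mathrm{HS}_\Q$). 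Your Step 3, which is supposed to close this gap, rests on a false claim: if $\End_{\mathrm{Hodge}}(T(X))\simeq\Z$, the only Hodge-invariant rational isometries of $V=T(X)_\Q$ are $\pm\id$, so they certainly do not act transitively on the integral Hodge lattices inside $V$ in a given genus. No transitivity of this kind holds, and the hypothesis $\End_{\mathrm{Hodge}}(T(X))\simeq\Z$ cannot be "used in its full strength" to produce it.

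You have also misidentified the role of the hypothesis $\rk(T(X))\neq 4$. It has nothing to do with spinor genera or extra genus classes. The point (as in \cite{Mei24}) is that once one has an integral Hodge isomorphism $f\colon T(X)\to T(Y)$, the pulled-back form is a Hodge-invariant form on $T(X)$, hence a rational multiple $\lambda$ of the given one; equality of discriminants forces $\lambda^{\rk}= 1$, i.e.\ $\lambda=\pm1$, and the sign is fixed by comparing signatures --- \emph{except} in rank $4$, where the signature $(2,2)$ is preserved under negating the form, so an anti-isometry cannot be excluded. Your Step 2 gestures at the right one-dimensionality statement for invariant pairings, but the discriminant comparison you invoke is not available from $[T(X)]=[T(Y)]$ alone via Lemma~\ref{lem: gluing group homomorphism} (that lemma concerns the gluing group of the full cohomology, which is trivial for the transcendental part on its own); the paper obtains it from the $\LL$-equivalence of $X$ and $Y$ applied to $H^4$.
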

\begin{proof}
    If $X$ and $Y$ are K3 surfaces, the statement is follows from \cite[Proposition 3.11]{Mei24} combined with \cite[Lemma 3.7]{Efi18}. 
    
    If $X$ and $Y$ are cubic fourfolds, we have $[T(X)(1)] = [T(Y)(1)]$ in $\gghs$. 
    Since  $\End_{\mathrm{Hodge}}(T(X)) \simeq \Z$, this implies by \cite[Lemma 3.7]{Efi18} that there exists an isomorphism of Hodge structures (not necessarily a Hodge isometry) $T(X)(1)\simeq T(Y)(1)$. 
    Moreover, since $X$ and $Y$ are $\LL$-equivalent, we have $\disc(T(X)) = \disc(T(Y))$ by Lemma \ref{lem: gluing group homomorphism} and Lemma \ref{lem: L equivalent cubic fourfolds have equal discriminants}. It now follows from \cite[Corollary 3.10]{Mei24} that we have a Hodge isometry $T(X)(1) \simeq T(Y)(1)$, which clearly induces a Hodge isometry $T(X)\simeq T(Y)$ via the Tate twist. 
\end{proof}

\section{Counting Fourier--Mukai partners of cubic fourfolds}

\label{sec: counting formula}
We compute a counting formula for Fourier--Mukai partners of cubic fourfolds with certain properties. The results in this section are closely related to \cite{HLOY02,Ma10,PertFMPartners21, FL23, BBM25}. 

%In this section, we consider a cubic fourfold $X$ which satisfies Derived Torelli (see Definition \ref{def: derived torelli assumption}).
We begin by collecting a few remarks and criteria concerning cubic fourfolds that satisfy Derived
Torelli in the sense of Definition \ref{def: derived torelli assumption}, and we describe how to find an example of a cubic which doesn't.

\begin{remark} \label{rem:1 implies 2 in the assumption}
Notice that the implication $(1) \Rightarrow (2)$ in Definition \ref{def: derived torelli assumption} always holds. By \cite[Proposition 3.4]{Huy17}, any equivalence $\mathcal{A}_X\simeq \mathcal{A}_Y$ induces an orientation-preserving Hodge isometry $\widetilde H(\calA_X,\bbZ) \to \widetilde H(\calA_Y,\bbZ)$ (\cite[Theorem 1.2]{Huy17}). Then, by \cite[Proposition 2.3]{AT2014}, there is an identification $H^4_{\mathrm{prim}}(X,\bbZ)\cong A_2^\perp\subset\widetilde H(\calA_X,\bbZ)$. By Voisin’s proof of the integral Hodge conjecture for cubic fourfolds, the primitive
algebraic lattices coincide with the primitive Hodge classes, $A(X)_{\mathrm{prim}}
 = H^4_{\mathrm{prim}}(X,\bbZ)\cap H^{2,2}(X)$. Hence we have a Hodge isometry $T(X)=A(X)_{\mathrm{prim}}^\perp \cong A(Y)_{\mathrm{prim}}^\perp = T(Y)$.
\end{remark}

The following result follows immediately from Theorem \ref{thm: known derived torelli statements}.
\begin{proposition} \label{prop: derived torelli criteria}
    In each of the following cases, the cubic fourfold $X$ satisfies Derived Torelli:
    \begin{enumerate}
        \item There is a twisted K3 surface $(S,\alpha)$ such that there exists a Hodge isometry $\tilde{H}(S,\alpha,\Z) \simeq \tilde{H}(\mathcal{A}_X,\Z)(-1)$, and there is a unique embedding $T(X)\hookrightarrow \tilde{\Lambda}$.
        \item If there exists a K3 surface $S$ and a Hodge isometry $T(X)\simeq T(S)(-1)$.
    \end{enumerate}
\end{proposition}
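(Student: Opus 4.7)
The plan is to verify, in each case, the hypotheses needed to run the chain of implications $(3) \Rightarrow (2) \Rightarrow (1)$ from Theorem~\ref{thm: known derived torelli statements} for any cubic fourfold $Y$. Combined with the always-true implication $(1) \Rightarrow (3)$ supplied by Remark~\ref{rem:1 implies 2 in the assumption}, this will establish Assumption~\ref{as: Derived Torelli for cubic fourfolds}. The step $(2) \Rightarrow (1)$ is covered by the twisted K3 portion of Theorem~\ref{thm: known derived torelli statements} and simply requires a twisted K3 surface $(S,\alpha)$ with a Hodge isometry $\tilde{H}(\mathcal{A}_X,\Z)(-1) \simeq \tilde{H}(S,\alpha,\Z)$. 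However, the step $(3) \Rightarrow (2)$ is not part of Theorem~\ref{thm: known derived torelli statements}, so in each case a separate argument is required to lift a Hodge isometry $T(X) \simeq T(Y)$ to a Hodge isometry $\tilde{H}(\mathcal{A}_X,\Z) \simeq \tilde{H}(\mathcal{A}_Y,\Z)$.

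For case (1), the twisted K3 data with $\tilde{H}(S,\alpha,\Z) \simeq \tilde{H}(\mathcal{A}_X,\Z)(-1)$ is given outright, so the $(2) \Rightarrow (1)$ part of Theorem~\ref{thm: known derived torelli statements} applies directly. For the lift $(3) \Rightarrow (2)$, I would argue as follows. Both $T(X)$ and $T(Y)$ embed primitively into $\tilde{\Lambda}$ via their inclusions into the respective Mukai lattices $\tilde{H}(\mathcal{A}_X,\Z), \tilde{H}(\mathcal{A}_Y,\Z) \simeq \tilde{\Lambda}$. The uniqueness of the primitive embedding $T(X) \hookrightarrow \tilde{\Lambda}$ (the extra hypothesis in case (1)) forces these two embeddings to be identified up to isometry together with their gluing data, so Lemma~\ref{lem: primitive embeddings rosetta stone} provides an extension of the Hodge isometry $T(X) \simeq T(Y)$ to a lattice isometry of the ambient Mukai lattices. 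This extension is automatically a Hodge isometry, because the orthogonal complement of the transcendental part in each Mukai lattice is purely of Hodge type $(1,1)$, and every lattice isometry between such spaces respects the trivial Hodge decomposition.

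For case (2), I would reduce to case (1) by taking the Brauer-trivial twisted K3 surface $(S,1)$, so that $\tilde{H}(S,1,\Z) = \tilde{H}(S,\Z)$. The Hodge isometry $T(X) \simeq T(S)(-1)$ yields a primitive embedding of $T(X)(-1)$ into $\tilde{H}(S,\Z) \simeq \tilde{\Lambda}$. Invoking Nikulin's criteria for primitive embeddings of even lattices into the unimodular lattice $\tilde{\Lambda}$, this embedding is unique, so the same lifting argument as in case (1) produces a Hodge isometry $\tilde{H}(\mathcal{A}_X,\Z)(-1) \simeq \tilde{H}(S,\Z)$. The hypotheses of case (1) are then in force for $(S,\alpha)=(S,1)$, and the conclusion follows.

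The main obstacle is verifying the uniqueness of the primitive embedding $T(X) \hookrightarrow \tilde{\Lambda}$: in case (1) this is exactly what is hypothesized, while in case (2) it must be deduced from a Nikulin-style discriminant-form analysis using that $\tilde{\Lambda}$ is even unimodular of signature $(20,4)$ and that the orthogonal complement has sufficient rank. Once uniqueness is available, the extension of transcendental Hodge isometries to full Mukai-lattice Hodge isometries is routine, since the orthogonal complement consists entirely of $(1,1)$-classes, and Theorem~\ref{thm: known derived torelli statements} delivers the final step to Fourier--Mukai equivalence.
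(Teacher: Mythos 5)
Your proof is correct and is exactly the argument the paper intends: the paper offers no written proof beyond the assertion that the proposition ``follows immediately'' from Theorem~\ref{thm: known derived torelli statements} together with the remark that case (2) is a special case of case (1), and your elaboration (use uniqueness of the primitive embedding $T(X)\hookrightarrow\tilde\Lambda$ to lift the transcendental Hodge isometry to the Mukai lattices, observe the extension is automatically Hodge since the complements are of type $(1,1)$, then apply the twisted-K3 direction of the theorem) supplies precisely the missing details. The only point worth pinning down in case (2) is that the uniqueness of the embedding comes from Nikulin's criterion applied to the orthogonal complement $\NS(S)\oplus U$, which contains a hyperbolic plane and hence satisfies $\rk \geq 2+\ell(A)$, guaranteeing both uniqueness in its genus and surjectivity of $O(\cdot)\to O(A_{(\cdot)})$.
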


Note that (2) is a special case of (1) in Proposition \ref{prop: derived torelli criteria}.

\begin{remark} \label{cases in which the assumption doesn't hold}
Notice that a weaker Derived Torelli statement can be found in Conjecture 6.1 of \cite{Huy25},
where the existence of an equivalence $\calA_X\simeq \calA_Y$ is conjectured to be equivalent to an
orientation preserving Hodge isometry $\widetilde H(X,\Z)\simeq \widetilde H(Y,\Z)$. 
As Hodge isometries of $T(X)$ need not to lift to Hodge isometries of the Mukai lattice,
it is reasonable to expect that some cubic fourfolds do not satisfy Derived Torelli in the sense of
Definition \ref{def: derived torelli assumption}.
% {\color{blue} The idea. Let $\Lambda_{\mathrm{cub}} \simeq E_8(-1)^{2} \oplus U^{2} \oplus A_2(-1)$ and $\Lambda := \Lambda_{\mathrm{cub}}(-1) \simeq A_2^\perp \subset \widetilde\Lambda$.
% \begin{enumerate}
%     \item Start with a fixed even lattice $T$ of signature $(2,k)$ and find two primitive embeddings $\iota_i: T \hookrightarrow \Lambda$, $i=1,2$.
%     \item The embedding $\iota_i$ induces a hodge lattice of K3 type on $\Lambda$, defining $N_i:=\iota(T)^\perp$ (the candidate for being the algebraic lattice... do we also need to fix a period?).
%     \item Check that $N_i$ contains no vector with square $2$ or $6$.
%     \item Show that any Hodge isometry $T \cong T$ does not extend to $\widetilde\Lambda$.
% \end{enumerate}
% }
\end{remark}

\begin{remark}\label{rem: isometry actions}
    Let $T$ be a lattice such that $\disc(T)$ is not divisible by $3$, suppose that we have a primitive embedding $T\hookrightarrow \Lambda_{\text{cub}},$ and let $N\coloneqq T^\perp\subset \Lambda_{\text{cub}}$.
    Let $G = \frac{\Lambda}{T\oplus N}$ be the gluing group for this primitive embedding. Then, by Lemma \ref{lem: gluing group orders}, we have     
    \[\frac{\disc(T)}{|G|}\cdot \frac{\disc(N)}{|G|} = \disc(\cublat) = 3.\]
    Since $\disc(T)$ is not divisible by $3$, and $|G|$ divides both $\disc(T)$ and $\disc(N)$, it follows that we have 
    \[
    \disc(T) = |G|, \qquad \disc(N) = 3\cdot |G|.  
    \]
    In particular, the canonical embedding $G\hookrightarrow A_T$ is actually a group isomorphism $G\simeq A_T$. Therefore, the natural embedding $G\hookrightarrow A_N$ induces an isometry $A_N \simeq A_T(-1)\oplus C_3$, where $C_3$ is one of the two non-degenerate quadratic modules of order $3$ (see Remark \ref{rem: quadratic modules of order 3}).
    As a consequence, we have a canonical isomorphism
    \begin{equation}\label{eq: isometries of discriminant split}
        O(A_N) \simeq O\left(A_T(-1)\right)\oplus O(C_3) \simeq O\left(A_T\right)\oplus \left\{\pm \id_{C_3}\right\}.
    \end{equation}
    From this observation, we obtain an action of $O(N)$ on $O(A_T)$. Indeed, if $f\in O(N)$ is an isometry, it induces an isometry $\overline{f}\in O(A_N)$, which can be uniquely written as $\overline{f} = (\sigma, \tau)\in O(A_T)\oplus O(C_3)$ via \eqref{eq: isometries of discriminant split}.

    On the other hand, if $\disc(N)$ is not divisible by $3$, by the symmetry of the situation we obtain an action of $O(T)$ on $O(A_N)$.
\end{remark}

The main result in this section is the following counting formula:

\begin{theorem} \label{thm: counting formula}
    Let $X$ be a cubic fourfold that \emph{satisfies Derived Torelli} (See Definition \ref{def: derived torelli assumption}). Assume moreover that $\disc(T(X))$ is not divisible by $3$. Then we have 
    \begin{equation} \label{eq: counting formula}
    \#\FM(X) = \sum_{N'\in \mathcal{H}(N)}|O(N')\backslash O(A_{T(X)})/O_{\mathrm{Hodge}}(T(X))|.
    \end{equation}
    Here, $N = A(X)_{\text{prim}}$ and the action of $O(N')$ on $O(A_{T(X)})$ is the one defined by Remark \ref{rem: isometry actions}.
\end{theorem}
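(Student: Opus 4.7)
The plan is to combine the Torelli theorem for cubic fourfolds, the surjectivity of the period map, and the gluing data description of primitive lattice embeddings from Lemma \ref{lem: primitive embeddings rosetta stone} to reduce $\#\FM(X)$ to a sum of double coset counts indexed by $\mathcal{H}(N)$.

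By Assumption \ref{as: Derived Torelli for cubic fourfolds}, $\FM(X)$ is in bijection with isomorphism classes of cubic fourfolds $Y$ admitting a Hodge isometry $T(Y)\simeq T(X)$. The Torelli theorem (Theorem \ref{thm: torelli for cubic fourfolds}) together with the surjectivity of the period map (Theorem \ref{thm: surjectivity of the period map}) then identifies such isomorphism classes with Hodge isometry classes of Hodge lattice structures on the abstract lattice \cublat\ whose transcendental sublattice is Hodge isometric to $T(X)$ and whose algebraic sublattice contains no vectors of square $2$ or of square $6$ with divisibility $3$; equivalently, the isometry class of the algebraic sublattice lies in $\mathcal{H}(N)$.

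We partition by the isometry class $N'\in\mathcal{H}(N)$ of the algebraic sublattice. For each such $N'$, a Hodge lattice structure of this form is encoded by a pair $(\iota,\phi)$ consisting of a primitive embedding $\iota\colon N'\hookrightarrow\cublat$ and a lattice isometry $\phi\colon \iota(N')^\perp \simeq T(X)$ pulling back the fixed Hodge structure on $T(X)$; two pairs correspond to isomorphic cubic fourfolds iff they are related by an isometry of \cublat, modulo re-parametrisations by $O(N')$ (pre-composed with $\iota$) and $O_{\mathrm{Hodge}}(T(X))$ (post-composed with $\phi$). By Lemma \ref{lem: gluing group orders} together with the assumption $3\nmid \disc(T(X))$, the gluing group of any such $\iota$ has order $\disc(T(X))$, so Remark \ref{rem: isometry actions} gives a canonical splitting $A_{N'}\simeq A_{T(X)}(-1)\oplus C_3$ under which the gluing homomorphism $\gamma$ becomes an element of $O(A_{T(X)})$. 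Lemma \ref{lem: primitive embeddings rosetta stone}(2) then identifies the set of primitive embeddings with orthogonal complement lattice-isometric to $T(X)$, modulo the combined action of $O(\cublat)\times O(N')$, with a torsor under $O(A_{T(X)})$, where the residual left $O(N')$-action on this torsor is exactly the one described in Remark \ref{rem: isometry actions}. Quotienting further by the right action of $O_{\mathrm{Hodge}}(T(X))$ via $O(T(X))\to O(A_{T(X)})$ accounts for the ambiguity in $\phi$ and yields the count $|O(N')\backslash O(A_{T(X)})/O_{\mathrm{Hodge}}(T(X))|$ for the given $N'$. Summing over $N'\in\mathcal{H}(N)$ gives \eqref{eq: counting formula}.

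The main obstacle is the precise bookkeeping of these actions via Lemma \ref{lem: primitive embeddings rosetta stone}(2). In particular, one must verify that the $\{\pm\id_{C_3}\}$-factor in the splitting $O(A_{N'})\simeq O(A_{T(X)})\oplus \{\pm\id_{C_3}\}$ acts trivially on the gluing data $\gamma$ (because its source $G\simeq A_{T(X)}(-1)$ is pointwise fixed by this factor), so that the residual $O(N')$-action on the torsor factors through the $O(A_{T(X)})$-summand. A further technical point is to verify, using Nikulin's theory of primitive embeddings, that every $N'\in\mathcal{H}(N)$ admits a primitive embedding into \cublat\ with orthogonal complement isometric to $T(X)$, so that no class in $\mathcal{H}(N)$ contributes vacuously to the sum.
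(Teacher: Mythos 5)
Your proposal is correct and follows essentially the same route as the paper: reduce via Assumption \ref{as: Derived Torelli for cubic fourfolds}, the Torelli theorem and the surjectivity of the period map to a lattice-embedding problem, use Lemma \ref{lem: gluing group orders} and $3\nmid\disc(T(X))$ to split $A_{N'}\simeq A_{T(X)}(-1)\oplus C_3$ so that the gluing datum becomes an element of $O(A_{T(X)})$, and count via Lemma \ref{lem: primitive embeddings rosetta stone} as double cosets, summing over $\mathcal{H}(N)$. The only cosmetic difference is that you parametrise by primitive embeddings of $N'$ into $\cublat$ with the Hodge data carried by an isometry of the complement with $T(X)$, whereas the paper embeds $T(X)$ and records the isometry class of its complement; the resulting double-coset count is identical.
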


\begin{remark}
    If one replaces $\mathcal{H}(N)$ by $\mathcal{G}(N)$ in \eqref{eq: counting formula}, one obtains the number of \textit{virtual Fourier--Mukai partners} as in \cite{BBM25}.
\end{remark}

\begin{definition} \label{defn:Emb}
    For lattices $T,N,L$, we denote by $\Emb_N(T,L)$ the set of $O(\cublat)$-orbits of primitive embeddings $f\colon T\hookrightarrow L$ such that $T^\perp \simeq N$. Note that $O(T)$ acts naturally on $\Emb_N(T,L)$.
\end{definition}

We now prepare for the proof of Theorem \ref{thm: counting formula} with some technical results.

\begin{lemma}\label{lem: genus is fixed}
    Let $X$ be a cubic fourfold such that $\disc(T(X))$ is not divisible by $3$. 
    Then, for any Fourier--Mukai partner $Y$ of $X$, we have 
    \[A(Y)_{\text{prim}}\in \mathcal{H}\left(A(X)_{\text{prim}}\right).\]
    In particular, $A(Y)_{\text{prim}}$ is in the same genus as $A(X)_{\text{prim}}$. 
\end{lemma}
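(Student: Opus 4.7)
The plan is to use that any Fourier--Mukai equivalence of Kuznetsov components of cubic fourfolds induces a Hodge isometry of transcendental lattices (Remark \ref{rem:1 implies 2 in the assumption}), turning the problem into one about primitive embeddings of a fixed lattice into $\Lambda_{\mathrm{cub}}^0$. Setting $N = A(X)_{\mathrm{prim}}$, $N' = A(Y)_{\mathrm{prim}}$, and $T \simeq T(X) \simeq T(Y)$, both $N$ and $N'$ arise as orthogonal complements of abstractly isometric primitive embeddings $T \hookrightarrow \Lambda_{\mathrm{cub}}^0$ coming respectively from $H^4(X,\Z)_{\mathrm{prim}}$ and $H^4(Y,\Z)_{\mathrm{prim}}$.

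From this viewpoint, equality of ranks and signatures of $N$ and $N'$ is immediate: both are determined by the fixed invariants of $\Lambda_{\mathrm{cub}}^0$ and $T$. For the discriminant forms, the hypothesis $3 \nmid \disc T(X)$ is used precisely so that Remark \ref{rem: isometry actions} applies to both embeddings, yielding
\[
A_N \simeq A_T(-1) \oplus C_N, \qquad A_{N'} \simeq A_T(-1) \oplus C_{N'},
\]
where each of $C_N$ and $C_{N'}$ is one of the two non-degenerate finite quadratic modules of order $3$ classified in Remark \ref{rem: quadratic modules of order 3}. To identify $C_N$ with $C_{N'}$, I would combine the standard congruence $\sgn(A_L) \equiv \sgn(L) \pmod 8$ (the same ingredient used implicitly in Remark \ref{rem: quadratic modules of order 3} for $A_2$) with the already-established equality $\sgn N = \sgn N'$; this forces $\sgn(C_N) \equiv \sgn(C_{N'}) \pmod 8$, and the classification in Remark \ref{rem: quadratic modules of order 3} then upgrades this congruence to an isomorphism $C_N \simeq C_{N'}$. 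Hence $A_N \simeq A_{N'}$, and $N' \in \mathcal{G}(N)$.

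To upgrade to $N' \in \mathcal{H}(N)$, I would invoke the surjectivity of the period map (Theorem \ref{thm: surjectivity of the period map}) applied to $Y$: since the period of any cubic fourfold lies outside $\calC_2 \cup \calC_6$, the lattice $A(Y)_{\mathrm{prim}}$ contains neither vectors of square $2$ nor vectors of square $6$ with divisibility $3$ in $H^4(Y,\Z)_{\mathrm{prim}}$, exactly as noted in the paragraph preceding the definition of $\mathcal{H}(N)$.

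The main technical point I anticipate is matching the order-$3$ factors $C_N$ and $C_{N'}$; the rest reduces to formal properties of primitive embeddings already developed in Section 2. The hypothesis $3 \nmid \disc T(X)$ is critical here, since without it Remark \ref{rem: isometry actions} breaks down and $A_N$ need not admit the clean direct-sum decomposition that the signature-mod-$8$ comparison exploits.
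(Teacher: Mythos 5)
Your argument for $N'\in\mathcal{G}(N)$ is correct and is essentially the paper's: apply Remark \ref{rem: isometry actions} to both embeddings, then match the order-$3$ factors via $\sgn(A_L)\equiv\sgn(L)\pmod 8$ together with the fact (Remark \ref{rem: quadratic modules of order 3}) that $\sgn(C_3)\equiv\pm 2\pmod 8$, so that $C_3$ and $C_3(-1)$ have distinct signatures.

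The gap is in the upgrade to $N'\in\mathcal{H}(N)$. The condition defining $\mathcal{H}(N)$ excludes vectors $v\in N'$ with $v^2=6$ and $\dv(v)=3$ where the divisibility is computed \emph{in $N'$ itself}, whereas the surjectivity of the period map only excludes vectors of square $6$ with divisibility $3$ \emph{in the ambient lattice $H^4(Y,\Z)_{\mathrm{prim}}$}. Since $N'\subset H^4(Y,\Z)_{\mathrm{prim}}$, one only has $\dv_{H^4(Y,\Z)_{\mathrm{prim}}}(v)\mid \dv_{N'}(v)$, so a vector could a priori have divisibility $3$ in $N'$ but divisibility $1$ in the full primitive cohomology, and the period map says nothing about it. You assert the two conditions coincide ("exactly as noted in the paragraph preceding the definition"), but that paragraph concerns ambient divisibility. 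The paper closes this gap with a separate argument: if $v\in N'$ has $v^2=6$ and $\dv_{N'}(v)=3$, then $\tfrac{v}{3}$ generates the $C_3$ summand of $A_{N'}\simeq A_{T(Y)}(-1)\oplus C_3$, and a gluing-group computation identifies $A_{H^4(Y,\Z)_{\mathrm{prim}}}\simeq C_3$ as generated by the same class $\tfrac{v}{3}$, forcing $\tfrac{v}{3}\in H^4(Y,\Z)_{\mathrm{prim}}^\vee$ and hence $\dv_{H^4(Y,\Z)_{\mathrm{prim}}}(v)=3$; only then does the period map yield a contradiction. This step genuinely uses the hypothesis $3\nmid\disc(T(X))$ a second time and is the nontrivial content of the second half of the proof; your proposal needs it to be supplied.
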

\begin{proof}
    To ease notation, we write $N = A(X)_{\text{prim}}$ and $N' = A(Y)_{\text{prim}}$.
    By Remark \ref{rem:1 implies 2 in the assumption}, since $Y$ is a Fourier--Mukai partner of $X$, there exists a Hodge isometry $T(X)\simeq T(Y)$. In particular, just as in Remark \ref{rem: isometry actions}, we find $A_N \simeq A_{T(X)}(-1)\oplus C_3$, where $C_3$ is one of the two non-degenerate quadratic modules of order $3$. Likewise, we obtain $A_{N'}\simeq A_{T(Y)}(-1)\oplus C_3',$ where $C_3'$ is one of the two non-degenerate quadratic modules of order $3$. By Remark \ref{rem: quadratic modules of order 3}, we see that $C_3\simeq C_3'$ or $C_3\simeq C_3'(-1)$.
    We now argue that we must have $C_3 \simeq C_3'$, which implies $A_N\simeq A_{N'}$, thus $N'$ is in the genus of $N$. 
    By construction, we have $\sgn(N) = \sgn(N')$, thus $\sgn(A_N) \equiv \sgn(A_{N'})\pmod 8$. This implies that we have 
    \[
    0=\sgn(A_N)-\sgn(A_{N'}) = \sgn(A_{T(X)(-1)}) + \sgn(C_3) - \sgn(A_{T(Y)(-1)})-\sgn(C_3').
    \]
    Since $T(X)$ is isometric to $T(Y)$, we have $\sgn(A_{T(X)(-1)}) = \sgn(A_{T(Y)(-1)})$, hence 
    \[\sgn(C_3) =\sgn(C_3').\]
    Recall from Remark \ref{rem: quadratic modules of order 3} that the signature of $C_3$ is either $2$ or $6$ modulo $8$. In particular, we find $\sgn(C_3(-1)) = -\sgn(C_3) \not \equiv \sgn(C_3)\pmod 8$. Since $\sgn(C_3) = \sgn(C_3')$, we obtain $C_3 \simeq C_3'$, which shows that $N$ and $N'$ are in the same genus. 

    We now check that $N' \in \mathcal{H}(N)$. 
    Firstly, note that if $N'$ were to contain a vector of square $2$, this would be a contradiction with the surjectivity of the period map, see Theorem \ref{thm: surjectivity of the period map}.
    
    Similarly, we now prove that $N'$ cannot contain a vector of square $6$ and divisibility $3$. For the sake of contradiction, let us suppose that there exists a vector $v\in N'$ with $v^2 = 6$ and $\dv_{N'}(v) = 3$. 
    Then the element $\frac{v}{3}\in A_{N'} \simeq A_{T(Y)(-1)}\oplus C_3$ has order $3$, hence it is a generator for $C_3$ and, importantly, is not contained in $A_{T(Y)(-1)}$. The gluing group $G \coloneqq \frac{H^4(Y,\Z)_\mathrm{prim}}{N\oplus T(Y)}$ is equal to $A_{T(Y)(-1)}\oplus A_{T(Y)}$ as a subgroup of $A_{N'} \oplus A_{T(Y)}$. 
    Therefore, we have a natural isomorphism $A_{H^4(Y,\Z)_\mathrm{prim}} \simeq G^\perp / G \simeq C_3$, showing that $A_{H^4(Y,\Z)_\mathrm{prim}}$ is also generated by $\frac{v}{3}$. This implies that $\frac{v}{3}\in H^4(Y,\Z)_\mathrm{prim}^\vee$, hence $v$ must have divisibility $3$ in $H^4(Y,\Z)_\mathrm{prim}$. This contradicts the surjectivity of the period map, c.f. Theorem \ref{thm: surjectivity of the period map}.

    This shows that $N'\in \mathcal{H}(N)$, as required.
\end{proof}

\begin{definition}
    For a cubic fourfold $X$ and a lattice $N$, we write $\FM(X,N)$ for the set of isomorphism classes of Fourier--Mukai partners $Y$ of $X$ such that there exists an isometry $A(Y)_\text{prim} \simeq N$.
\end{definition}

\begin{proposition} \label{prop: correspondence between FM partners and primitive embeddings}
    Let $X$ be a cubic fourfold which satisfies Derived Torelli. 
    Let $N'$ be a lattice. Then there is a one-to-one correspondence between the set $\FM(X,N')$ and the set of $O_{\mathrm{Hodge}}(T(X))$-orbits of primitive embeddings in $\Emb_{N'}(T(X),\cublat)$.
\end{proposition}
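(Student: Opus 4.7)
The plan is to construct a map $\Psi\colon \FM(X,N')\to \Emb_{N'}(T(X),\cublat)/O_{\mathrm{Hodge}}(T(X))$ and prove it is a bijection. Given $Y\in \FM(X,N')$, Assumption \ref{as: Derived Torelli for cubic fourfolds} supplies a Hodge isometry $\phi\colon T(X)\simeq T(Y)$, and since $H^4(Y,\Z)_{\mathrm{prim}}$ is abstractly isometric to $\cublat$ we may fix any isometry $\iota\colon H^4(Y,\Z)_{\mathrm{prim}}\simeq \cublat$. Its restriction to $T(Y)$ is a primitive embedding $T(Y)\hookrightarrow \cublat$ with orthogonal complement $\iota(A(Y)_{\mathrm{prim}})\simeq N'$, so $\iota\circ\phi$ lies in $\Emb_{N'}(T(X),\cublat)$, and we set $\Psi(Y)\coloneqq [\iota\circ\phi]$.

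Well-definedness comes from tracking the ambiguity of each choice. A different marking $\iota$ differs by an element of $O(\cublat)$, which is exactly the equivalence defining $\Emb_{N'}$. Precomposing $\phi$ with $g\in O_{\mathrm{Hodge}}(T(X))$ stays in the chosen $O_{\mathrm{Hodge}}(T(X))$-orbit, while postcomposing with $h\in O_{\mathrm{Hodge}}(T(Y))$ can be rewritten as $\iota\circ h\circ \phi=(\iota\circ \phi)\circ(\phi^{-1}\circ h\circ \phi)$ with $\phi^{-1}\circ h\circ \phi\in O_{\mathrm{Hodge}}(T(X))$. Independence from the isomorphism class of $Y$ follows from Theorem \ref{thm: torelli for cubic fourfolds}: any isomorphism $Y_1\simeq Y_2$ induces a Hodge isometry of primitive cohomologies along which compatible choices of $\phi$ and $\iota$ can be transported.

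For injectivity, assume $\Psi(Y_1)=\Psi(Y_2)$ and pick representatives with $\iota_1\circ \phi_1 = h\circ \iota_2\circ \phi_2\circ g$ for some $h\in O(\cublat)$ and $g\in O_{\mathrm{Hodge}}(T(X))$. Transporting the Hodge structures on $H^4(Y_j,\Z)_{\mathrm{prim}}$ to $\cublat$ via $\iota_j$, this identity forces $h$ to be a Hodge isometry on the transcendental part $\iota_2(T(Y_2))\to \iota_1(T(Y_1))$, since $\phi_j$ and $g$ are Hodge. The orthogonal complement is of pure type $(2,2)$ on both sides, so $h$ is automatically Hodge there as well, and $\iota_1^{-1}\circ h\circ \iota_2$ becomes a Hodge isometry $H^4(Y_2,\Z)_{\mathrm{prim}}\simeq H^4(Y_1,\Z)_{\mathrm{prim}}$. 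Theorem \ref{thm: torelli for cubic fourfolds} then yields $Y_1\simeq Y_2$.

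For surjectivity, given a representative $f\colon T(X)\hookrightarrow \cublat$ we endow $\cublat$ with a K3-type Hodge structure by transporting the Hodge structure of $T(X)$ to $f(T(X))$ and declaring the orthogonal complement (isometric to $N'$) to be of type $(2,2)$. If $\FM(X,N')=\emptyset$ there is nothing further to prove; otherwise Lemma \ref{lem: genus is fixed} forces $N'\in \mathcal{H}(N)$, so $N'$ contains no vectors of square $2$ and no vectors of square $6$ with divisibility $3$, which are exactly the conditions placing the period outside $\mathcal{C}_2\cup \mathcal{C}_6$. Theorem \ref{thm: surjectivity of the period map} then yields a cubic fourfold $Y$ with a Hodge-isometric marking $\iota_Y\colon H^4(Y,\Z)_{\mathrm{prim}}\simeq \cublat$. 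By construction $T(Y)\simeq T(X)$ as Hodge lattices and $A(Y)_{\mathrm{prim}}\simeq N'$, so Assumption \ref{as: Derived Torelli for cubic fourfolds} gives $Y\in \FM(X,N')$, and setting $\phi\coloneqq \iota_Y^{-1}\circ f$ yields $\Psi(Y)=[f]$. The main subtlety is precisely this surjectivity step, namely translating the lattice-theoretic definition of $\mathcal{H}(N)$ (a condition on divisibility in $N'$) into the geometric surjectivity obstruction on $\cublat$; this is the technical content already encoded in the proof of Lemma \ref{lem: genus is fixed}.
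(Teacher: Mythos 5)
Your construction of the map, the well-definedness check, and the injectivity argument via the Torelli Theorem coincide with the paper's proof, and your surjectivity step likewise proceeds by endowing $\cublat$ with the Hodge structure induced by $f$ and invoking the surjectivity of the period map together with Assumption \ref{as: Derived Torelli for cubic fourfolds}. So the overall route is the same.

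The one genuine problem is how you dispose of the condition $N'\in\mathcal{H}(N)$ in the surjectivity step. Writing ``if $\FM(X,N')=\emptyset$ there is nothing further to prove'' does not close the argument: surjectivity of a map out of the empty set onto $\Emb_{N'}(T(X),\cublat)/O_{\mathrm{Hodge}}(T(X))$ requires the target to be empty, and that is exactly what can fail. If $N'$ has the right signature and discriminant form to occur as an orthogonal complement but contains a vector of square $2$ (or one of square $6$ whose image in $\cublat$ has divisibility $3$), then primitive embeddings $T(X)\hookrightarrow\cublat$ with complement $N'$ may exist while no cubic fourfold realises the corresponding period, by Theorem \ref{thm: surjectivity of the period map}; for such $N'$ the asserted bijection is simply false, so the case cannot be waved away. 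Moreover, your fallback in the non-empty case leans on Lemma \ref{lem: genus is fixed}, which carries the extra hypothesis $3\nmid\disc(T(X))$ that the proposition does not assume (and which is unavailable precisely where the proposition is also needed, e.g.\ in Proposition \ref{prop: counting formula div 3}). The paper handles this by taking $N'\in\mathcal{H}(N)$ as given in the surjectivity step --- the proposition is only ever applied to such $N'$, as in Corollary \ref{cor: counting primitive embeddings} --- so the correct fix is to impose $N'\in\mathcal{H}(N)$ (or the equivalent vector conditions) as a standing hypothesis rather than to try to derive it from non-emptiness of $\FM(X,N')$.
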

\begin{proof}
    If $Y\in \FM(X,N')$, and we fix any Hodge isometry $\psi\colon T(X)\simeq T(Y)$, and any isometry $\phi\colon H^4(Y,\Z)_\text{prim} \simeq \cublat$ then the composition 
    \[
        i\colon T(X)\overset{\psi}{\simeq} T(Y)\hookrightarrow H^4(Y,\Z)_\text{prim}\overset{\phi}{\simeq} \cublat
    \]
    is a primitive embedding with $T(X)^\perp \simeq N'$. However, $i$ depends on the choice of a Hodge isometry $\psi$ and the choice of an isometry $\phi\colon H^4(Y,\Z)_\text{prim}\simeq \cublat$. However, if $\psi'\colon T(X)\simeq T(Y)$ is another Hodge isometry, and $\phi'\colon H^4(Y,\Z)_\text{prim}\simeq \cublat$ is another isometry, and we denote by $i'\colon T(X)\hookrightarrow \cublat$ the resulting primitive embedding, then the commutative diagram
    \begin{equation*}
        \xymatrix{
            i\colon &T(X)\ar[r]_\simeq^\psi \ar[d]_{(\psi')^{-1}\circ \psi} & T(Y) \ar@{^(->}[r] \ar@{=}[d] & H^4(Y,\Z)_\text{prim} \ar@{=}[d] \ar[r]_\simeq^\phi & \cublat \ar[d]_\simeq^{\phi'\circ \phi^{-1}} \\
            i'\colon &T(X)\ar[r]_\simeq^{\psi'} & T(Y) \ar@{^(->}[r] & H^4(Y,\Z)_\text{prim} \ar[r]_\simeq^{\phi'} & \cublat
        }
    \end{equation*}
    shows that the primitive embeddings $i$ and $i'$ are in the same $O_\mathrm{Hodge}(T(X))$-orbit. We will denote an embedding in this orbit by $i_Y$ from now on.
    This process thus yields a well-defined map:
    \[
    \function{\operatorname{emb}}{\FM(X,N')}{\Emb_{N'}(T(X),\cublat)\big/O_\mathrm{Hodge}(T(X))}{Y}{i_Y}
    \]
    The injectivity of this map follows essentially from the Torelli Theorem: if $Y,Y'\in \FM(X,N')$, such that $i_Y$ and $i_{Y'}$ are in the same $O_{\mathrm{Hodge}}(T(X))$-orbit, then there is a Hodge isometry $\psi\colon T(X)\simeq T(X)$ fitting into a commutative diagram:
    \[
    \xymatrix{
         i_Y\colon &T(X)\ar[r]^\simeq \ar[d]_{\psi} & T(Y) \ar@{^(->}[r] & H^4(Y,\Z)_\text{prim}  \ar[r]_\simeq^{\phi} & \cublat \ar@{=}[d] \\
            i_{Y'}\colon &T(X)\ar[r]^\simeq & T(Y') \ar@{^(->}[r] & H^4(Y',\Z)_\text{prim} \ar[r]_\simeq^{\phi'} & \cublat
        }
    \]
    Now $(\phi')^{-1}\circ \phi\colon H^4(Y,\Z)_\text{prim}\simeq H^4(Y',\Z)_\text{prim}$ is a Hodge isometry, so that $Y\simeq Y'$ by the Torelli Theorem.

    For surjectivity, note that any primitive embedding $i\colon T(X)\hookrightarrow \cublat$ induces a Hodge structure on $\cublat$. 
    We denote the resulting Hodge lattice $L_i$, i.e. $L_i$ is the Hodge lattice whose underlying lattice structure is $\cublat$ and whose Hodge structure is induced by $i$. 
    Since $T(X)^\perp \simeq N'\in \mathcal{H}(N)$, it follows that $N'$ contains no vectors of square $6$ and divisibility $3$, hence there is a unique cubic fourfold $Y$ for which there is a Hodge isometry $H^4(Y,\Z)_\text{prim} \simeq L_i$.
    By construction, we have $T(Y)\simeq T(L_i(1)) \simeq T(X)$, and $A(Y)_\text{prim} \simeq \NS(L_i(1))\simeq N'$, where $T(L_i(1))$ and $\NS(L_i(1))$ are defined as in Section \ref{sec: Hodge realisation}. Since $X$ satisfies Derived Torelli, it follows that $Y\in \FM(X,N')$. Clearly, we have $i = \operatorname{emb}(Y)$. 
    Therefore, the map $\operatorname{emb}$ is surjective, as required.
\end{proof}

\begin{corollary} \label{cor: counting primitive embeddings}
    Let $X$ be a cubic fourfold which satisfies Derived Torelli, and such that $\disc(T(X))$ is not divisible by $3$. Then we have 
    \[
        \FM(X) = \bigsqcup_{N'\in \mathcal{H}(N)} \FM(X,N') \simeq \bigsqcup_{N' \in \mathcal{H}(N)} \Emb_{N'}(T(X),\cublat)/O_{\mathrm{Hodge}}(T(X)).
    \]
\end{corollary}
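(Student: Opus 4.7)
The proof will be a direct assembly of the two main technical results established just above the statement, together with an elementary partitioning argument. First I would observe that any Fourier–Mukai partner $Y$ of $X$ is, by definition, classified up to isomorphism by its class in $\FM(X)$, and that the isometry class of $A(Y)_{\mathrm{prim}}$ is a well-defined invariant. This gives a tautological partition
\[
\FM(X) = \bigsqcup_{[N']} \FM(X,N'),
\]
where $[N']$ ranges over all isometry classes of even lattices; almost every term is empty.

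Next, I would invoke Lemma \ref{lem: genus is fixed}, whose hypotheses are satisfied since $\disc(T(X))$ is indivisible by $3$: it shows that whenever $\FM(X,N')$ is non-empty, we must have $N' \in \mathcal{H}(N)$. Consequently the disjoint union collapses to
\[
\FM(X) = \bigsqcup_{N' \in \mathcal{H}(N)} \FM(X,N'),
\]
which is the first equality in the statement.

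For the second identification, I would apply Proposition \ref{prop: correspondence between FM partners and primitive embeddings} term-by-term. That proposition relies only on Assumption \ref{as: Derived Torelli for cubic fourfolds} and provides, for each lattice $N'$, a canonical bijection
\[
\FM(X,N') \;\simeq\; \Emb_{N'}(T(X),\cublat)\big/ O_{\mathrm{Hodge}}(T(X)).
\]
Taking the disjoint union over $N' \in \mathcal{H}(N)$ yields the desired formula. There is essentially no obstacle here, as both inputs have already been proved; the only point deserving a line of justification is that the partition by the isomorphism class of $A(Y)_{\mathrm{prim}}$ is indeed well-defined, which follows immediately from the fact that $A(Y)_{\mathrm{prim}}$ is a canonical invariant of the isomorphism class of $Y$.
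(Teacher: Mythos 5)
Your proposal is correct and matches the paper's (implicit) argument exactly: the corollary is stated without a written proof precisely because it follows by combining Lemma \ref{lem: genus is fixed} (which needs only the hypothesis $3 \nmid \disc(T(X))$ to restrict the possible isometry classes of $A(Y)_{\mathrm{prim}}$ to $\mathcal{H}(N)$) with the bijection of Proposition \ref{prop: correspondence between FM partners and primitive embeddings} applied to each $N' \in \mathcal{H}(N)$. Nothing is missing.
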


We are now ready to prove Theorem \ref{thm: counting formula}.

\begin{proof}[Proof of Theorem \ref{thm: counting formula}]
    Let $N' \in \mathcal{H}(N)$. Fix, once and for all, an isometry $A_{N'} \simeq A_{T(X)} \oplus C_3$. Via this isometry, we will view $A_{T(X)}$ as a quadratic submodule of $A_{N'}$. 

    Suppose $i\colon T(X)\hookrightarrow \cublat$ is a primitive embedding with orthogonal complement $T(X)^\perp \simeq N'$. From Lemma \ref{lem: primitive embeddings rosetta stone}, it follows that $i$ induces a pair $(G_i,\gamma_i)$, where $G_i\subset A_{T(X)}$ and $\gamma_i\colon G\hookrightarrow A_{N'}$. Here, $G_i$ is isomorphic, as a group, to the gluing group 
    \[
        G_i\simeq \frac{\cublat}{T(X) \oplus N'}.
    \]
    In particular, just as in Remark \ref{rem: isometry actions}, we find $G_i = A_{T(X)}$. Note, moreover, that $A_{T(X)}\subset A_{N'}$ is the unique subgroup of $A_{N'}$ order $\disc(T(X))$. Therefore, the isometry $\gamma_i\colon A_{T(X)}\simeq A_{N'}$ has image $A_{T(X)}$ and therefore induces an isometry $A_{T(X)}\simeq A_{T(X)}$, which we shall also denote by $\gamma_i$. 

    We claim that the following is a well-defined bijection:
    \[
    \isomorphism{\Gamma_{N'}}{\Emb_{N'}(T(X),\cublat)/O_{\mathrm{Hodge}}(T(X))}{O(N')\backslash O(A_{T(X)}) / O_{\mathrm{Hodge}}(T(X))}{i}{\gamma_i.}
    \]
    Let us begin by showing that $\Gamma$ is well-defined and injective. If $i,i'\colon T(X)\hookrightarrow \cublat$ are two primitive embeddings contained in $\Emb_{N'}(T(X),\cublat)$. Then, by Lemma \ref{lem: primitive embeddings rosetta stone}, $i$ and $i'$ are in the same $O_{\mathrm{Hodge}}(T(X))$-orbit, i.e. there is a commutative square
    \[
    \xymatrix{
        T(X) \ar@{^(->}[r]^i \ar[d]_\psi^\simeq & \cublat \ar[d]_\simeq^\phi \\
        T(X) \ar@{^(->}[r]^{i'} & \cublat,
        }
    \]
    where $\psi\in O_{\mathrm{Hodge}}(T(X))$, if and only if   
    there is an isometry $\psi'\colon N'\simeq N'$ fitting into a commutative diagram 
    \begin{equation*}
            \xymatrix{
                G_i \ar[r]^{\gamma_i} \ar[d]_{\overline{\psi}}& A_{N'} \ar[d]^{\overline{\psi'}} \\
                G_{i'}\ar[r]^{\gamma_{i'}}& A_{N'}.
            }
    \end{equation*}
    This exactly means that $\gamma_i$ and $\gamma_{i'}$ define the same element of $O(N')\backslash O(A_{T(X)}) / O_{\mathrm{Hodge}}(T(X)),$ hence the map $\Gamma_{N'}$ is well-defined and injective.

    For surjectivity, note that any isometry $\gamma\colon A_{T(X)}\simeq A_{T(X)}$ induces the pair $(A_{T(X)},\gamma)$. From this pair, we obtain a primitive embedding $T(X)\hookrightarrow \cublat$  with orthogonal complement isometric to $N'$ as in Remark \ref{rem: pair inducing a primitive embedding}. This shows that $\Gamma_{N'}$ is surjective. 
    
    The result now follows from Corollary \ref{cor: counting primitive embeddings}.
\end{proof}

Due to the symmetry of the situation, one might expect an similar counting formula for Fourier--Mukai partners of cubic fourfolds to hold when we assume $\disc(A(X)_\mathrm{prim})$, rather than $\disc(T(X))$, is not divisible by $3$. 
The fundamental difference between these two assumptions is the following. If $X$ is a cubic fourfold such that $\disc(T(X))$ is not divisible by $3$, then $\disc(T(Y))$ is also not divisible by $3$ for any Fourier--Mukai partner $Y$ of $X$. 
On the other hand, if we assume $\disc(A(X)_\mathrm{prim})$ to be indivisible by $3$, then $X$ may have several Fourier--Mukai partners $Y$ for which $\disc(A(Y)_\mathrm{prim})$ is a multiple of $3$. Nevertheless, we have the following Counting Formula for Fourier--Mukai partners of $X$ whose primitive algebraic lattice has discriminant not divisible by $3$.

\begin{proposition} \label{prop: counting formula div 3}
    Let $X$ be a cubic fourfold which satisfies Derived Torelli, and write $N\coloneqq A(X)_\mathrm{prim}$. Suppose that $\disc(N)$ is not divisible by $3$. Then for any $N'\in \mathcal{H}(N)$, we have 
    \begin{equation} \label{eq: counting formula div 3}
    \#\FM(X,N') = |O(N')\backslash O(A_N)/O_{\mathrm{Hodge}}(T(X))|.
    \end{equation}
    Here, the action of $O_\mathrm{Hodge}(T(X))$ on $O(A_{N})$ is the one defined by Remark \ref{rem: isometry actions}.
\end{proposition}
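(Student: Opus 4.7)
The plan is to follow the proof of Theorem \ref{thm: counting formula} verbatim, swapping the roles of $T(X)$ and $N$ throughout. The first step is identical: Proposition \ref{prop: correspondence between FM partners and primitive embeddings} is insensitive to the divisibility hypothesis and already produces a bijection
\[
\FM(X, N') \simeq \Emb_{N'}(T(X), \cublat)/O_{\mathrm{Hodge}}(T(X)).
\]
Thus the task reduces to constructing a natural bijection between the right-hand side and $O(N') \backslash O(A_N) / O_{\mathrm{Hodge}}(T(X))$.

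Next I would analyze the gluing data $(G_i, \gamma_i)$ associated with a primitive embedding $i\colon T(X) \hookrightarrow \cublat$ whose orthogonal complement is isometric to $N'$. Since $N' \in \mathcal{H}(N)$ lies in the genus of $N$, one has $\disc(N') = \disc(N)$, which is coprime to $3$ by assumption. Applying Lemma \ref{lem: gluing group orders} as in Remark \ref{rem: isometry actions}, but with $T$ and $N$ swapped (this symmetric variant is explicitly mentioned at the end of Remark \ref{rem: isometry actions}), gives $|G_i| = \disc(N')$ and $\disc(T(X)) = 3\disc(N')$. Because $\gcd(3, \disc(N')) = 1$, the finite quadratic module $A_{T(X)}$ splits canonically as $A_{N'}(-1) \oplus C_3$ with $C_3$ a non-degenerate quadratic module of order $3$, and $G_i \subset A_{T(X)}$ is forced to be the non-$3$-primary summand, namely the canonical copy of $A_{N'}(-1)$.

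With this in hand, I would fix once and for all such a splitting $A_{T(X)} \simeq A_{N'}(-1)\oplus C_3$. Via this splitting and the identification $A_{N'} \simeq A_N$ coming from $N' \in \mathcal{G}(N)$, the anti-isometry $\gamma_i\colon A_{N'}(-1) \hookrightarrow A_{N'}$ (which is automatically surjective for order reasons) becomes an element $\Gamma_{N'}(i) \in O(A_N)$. The same splitting defines the action of $O_{\mathrm{Hodge}}(T(X))$ on $O(A_N)$ referenced in the statement: any $\psi \in O_{\mathrm{Hodge}}(T(X))$ induces $\overline\psi \in O(A_{T(X)})$, whose restriction to the non-$3$-primary summand gives the required element acting on $O(A_N)$, matching the symmetric version of Remark \ref{rem: isometry actions}. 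Now Lemma \ref{lem: primitive embeddings rosetta stone}(2) tells us that $i$ and $i'$ represent the same class in $\Emb_{N'}(T(X),\cublat)/O_{\mathrm{Hodge}}(T(X))$ if and only if $\Gamma_{N'}(i)$ and $\Gamma_{N'}(i')$ differ by an $O(N')$-action on the left and an $O_{\mathrm{Hodge}}(T(X))$-action on the right. Surjectivity of $\Gamma_{N'}$ is obtained as in the proof of Theorem \ref{thm: counting formula}: any isometry of $A_N$ yields a pair $(A_{N'}(-1), \gamma)$ which, by Remark \ref{rem: pair inducing a primitive embedding}, produces a lattice $L'$ in the genus of $\cublat$; since this genus consists of $\cublat$ alone, $L' \simeq \cublat$, and the condition $N' \in \mathcal{H}(N)$ guarantees the lattice has no forbidden short vectors, so the period map assigns to it a genuine cubic fourfold.

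The main delicate point, exactly as in Theorem \ref{thm: counting formula}, is to keep the sign twists consistent when passing between $A_{N'}(-1)$ and $A_{N'}$ under the fixed splitting, so that the two natural actions match those described in Remark \ref{rem: isometry actions} (applied symmetrically). Everything else is a symmetric rewriting of the argument used to prove Theorem \ref{thm: counting formula}.
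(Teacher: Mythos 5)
Your proposal is correct and follows essentially the same route as the paper: the paper likewise fixes a splitting $A_{T(X)}\simeq A_{N'}\oplus C_3$, observes that $G_i$ is forced to be the unique maximal subgroup of order prime to $3$ (so $\gamma_i$ becomes an element of $O(A_{N'})\simeq O(A_N)$), and then declares the bijectivity of $\Gamma_{N'}$ to be completely analogous to the proof of Theorem \ref{thm: counting formula}. You simply fill in the details of that analogy (including the sign twist on $A_{N'}(-1)$ and the uniqueness of $\cublat$ in its genus for surjectivity), which the paper leaves implicit.
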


\begin{proof}
    We fix an isometry $A_{T(X)} \simeq A_{N'}\oplus C_3$, where $C_3$ is one of the two non-degenerate finite quadratic modules of order $3$. A primitive embedding $i\colon T(X)\hookrightarrow \cublat$ induces a pair $(G_i,\gamma_i)$, where $G_i\subset A_{T(X)}$ and $\gamma_i\colon G_i\hookrightarrow A_{N'}$. 
    The group $G_i$ is equal to $A_{N'}$ as a subgroup of $A_{T(X)}$, since $A_{N'}\subset A_{T(X)}$ is the unique maximal subgroup whose order is not divisible by $3$. Therefore, $\gamma_i$ can be viewed as an element of $O(A_{N'}).$

    Similarly to the proof of Theorem \ref{thm: counting formula}, we obtain a map
    \[
    \isomorphism{\Gamma_{N'}}{\Emb_{N'}(T(X),\cublat)/O_{\mathrm{Hodge}}(T(X))}{O(N')\backslash O(A_{T(X)}) / O_{\mathrm{Hodge}}(T(X))}{i}{\gamma_i.}
    \]
    which we claim to be a bijection. The proof the bijectivity of $\Gamma_{N'}$ is completely analogous to the proof of Theorem \ref{thm: counting formula}, so we omit it here.
\end{proof}

\begin{remark}
    In the setting of Proposition \ref{prop: counting formula div 3}, it is possible that $X$ has a Fourier--Mukai partner $Y$ with $N ' = A(Y)_\mathrm{prim}\notin \mathcal{H}(N)$. This is equivalent to the discriminant of $A(Y)_\mathrm{prim}$ being divisible by $3$. 
    In this case, the gluing group of the embedding $N'\hookrightarrow H^4(Y,\Z)_\mathrm{prim}$ is isomorphic to $A_{T(Y)}$, but it is no longer necessarily the case that $A_{N'}$ can be written as $A_{T(Y)} \oplus C_3$ for some finite quadratic form $C_3$ of order $3$. 
    It is still possible to compute $\FM(X,N')$ via Proposition \ref{prop: correspondence between FM partners and primitive embeddings}, c.f. \cite{BBM25}, but a simple formula such as \eqref{eq: counting formula div 3} seems to be unattainable in this case.
    See Remark \ref{rem: the other FM partners for L29} for examples of this phenomenon.
    \end{remark}

\section{Examples of $\LL$-equivalent cubic fourfolds}
In this section, we construct new examples of $\LL$-equivalent cubic fourfolds, using the counting formula for Fourier--Mukai partners of Section \ref{sec: counting formula}. These examples are based on the following result.

\begin{proposition}\cite[\S 5]{IMOU20} \cite[\S 2.6.1]{KS18} \label{prop: motives of certain cubic fourfolds}
    Let $X$ be a cubic fourfold with one of the following properties:
    \begin{itemize}
        \item[A)] $X\in \mathcal{C}_{14}$, i.e. $X$ is a Pfaffian cubic fourfold
        \item[B)] $X$ contains a plane $P$ and another cycle $T\in \operatorname{CH}^2(X)$ such that the intersection number $(h^2-P)\cdot T$ is odd
    \end{itemize}
    Then there exists a K3 surface $S$ such that there exists a Hodge isometry $T(S)(-1)\simeq T(X)$ and such that the class of $X$ in the Grothendieck ring of varieties is given by
    \[
    [X] = 1 + \LL^2 + \LL^4 + [S]\LL.
    \]
\end{proposition}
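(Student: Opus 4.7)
The plan is to treat the two cases by separate geometric constructions, each producing both the K3 surface $S$ and the motivic identity from an explicit birational model of $X$.

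For case A, I would extract $S$ from the Beauville--Donagi correspondence, which identifies the Fano variety of lines $F(X)$ with the Hilbert square $S^{[2]}$ of a K3 surface $S$ of degree $14$. The transcendental sublattice of $H^2(S^{[2]},\Z)$ is naturally Hodge-isometric to $T(S)$, and combined with the Hodge anti-isometry $T(F(X))\simeq T(X)(-1)$ from Lemma \ref{lem: coarseness of a fano variety} this produces the required Hodge isometry $T(S)(-1)\simeq T(X)$. For the motivic identity I would follow \cite[\S 5]{IMOU20}: the Pfaffian construction realises a chain of blow-ups with smooth centres connecting $X$ to a $\mathbb{P}^2$-bundle over $S$, and tracking the blown-up classes in $\gravc$ yields $[X]=1+\LL^2+\LL^4+[S]\LL$.

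For case B, I would use Kuznetsov's quadric fibration picture: blowing up the plane $P$ in $X$ yields $\tilde X=\Bl_P(X)$ with projection $\pi\colon\tilde X\to\mathbb{P}^2$ a quadric surface fibration degenerating along a smooth sextic $C\subset\mathbb{P}^2$. The double cover $S\to\mathbb{P}^2$ branched over $C$ is a degree-$2$ K3 surface, and $\calA_X\simeq\Db(S,\alpha)$ for a Brauer class $\alpha\in\Br(S)[2]$ associated with the even Clifford algebra of $\pi$. The parity hypothesis $(h^2-P)\cdot T$ odd provides an odd-degree multi-section of $\pi$, which forces $\alpha=0$ by Amitsur's theorem and upgrades the Kuznetsov equivalence to $\calA_X\simeq\Db(S)$, delivering the Hodge isometry $T(X)\simeq T(S)(-1)$. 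The same triviality of $\alpha$ makes $\tilde X$ Zariski-locally a product along the stratification $\mathbb{P}^2=(\mathbb{P}^2\setminus C)\sqcup C$, with smooth fibre $\mathbb{P}^1\times\mathbb{P}^1$ and singular fibre a cone over a smooth conic, of classes $(1+\LL)^2$ and $1+\LL+\LL^2$ respectively; the resulting formula
\[
[\tilde X]=([\mathbb{P}^2]-[C])(1+\LL)^2+[C](1+\LL+\LL^2),
\]
combined with $[\tilde X]=[X]+(1+\LL+\LL^2)\LL$ and the double cover identity $[S]=2[\mathbb{P}^2]-[C]$, gives $[X]=1+\LL^2+\LL^4+[S]\LL$ after elementary algebra.

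The main obstacle is to secure both identities already in $\gravc$, not merely in $\gravc[\LL^{-1}]$, which requires exhibiting explicit chains of blow-ups along smooth centres rather than merely stable birational equivalences; this is the delicate part of the IMOU construction in case A. In case B the subtle point is the translation between the cycle-theoretic condition $(h^2-P)\cdot T$ odd and the Brauer-theoretic vanishing of $\alpha$, together with the verification that the quadric bundle is Zariski-locally trivial on each stratum of the discriminant once $\alpha=0$; after these geometric inputs are in place, the $K_0$-level bookkeeping reduces to the elementary calculation above.
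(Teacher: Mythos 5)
First, a point of reference: the paper offers no proof of this proposition at all — it is imported verbatim from \cite[\S 5]{IMOU20} and \cite[\S 2.6.1]{KS18} — so your attempt has to be measured against those sources. For case A you essentially defer to the cited construction for both the Hodge isometry (Beauville--Donagi, $F(X)\simeq S^{[2]}$) and the motivic identity, which is no worse than what the paper does. For case B you correctly identify the strategy of \cite{KS18}: the quadric surface fibration $\pi\colon \Bl_P(X)\to\PP^2$ degenerating along a sextic $C$, the degree-$2$ K3 double cover $S\to\PP^2$, and the odd multisection supplied by $(h^2-P)\cdot T$ being odd (the classical input here is Springer's theorem on quadrics over odd-degree extensions rather than Amitsur, but that is a misattribution, not a gap).

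The genuine gap is in your motivic bookkeeping for case B: both intermediate identities $[\tilde X]=([\PP^2]-[C])(1+\LL)^2+[C](1+\LL+\LL^2)$ and $[S]=2[\PP^2]-[C]$ are false in $\gravc$. Even after the Brauer class vanishes, the quadric bundle over $\PP^2\setminus C$ is not Zariski-locally a product: its generic fibre is a form of $\PP^1\times\PP^1$ whose two rulings are exchanged by the monodromy of the connected cover $S\to\PP^2$, so its class over the function field is $1+\LL^2+\LL[\operatorname{Spec} l]$ with $l$ the quadratic discriminant extension, not $(1+\LL)^2$. Likewise a connected \'etale double cover never has class twice that of its base (compare $z\mapsto z^2$ on $\mathbb{G}_m$, where $[\mathbb{G}_m]=\LL-1\neq 2(\LL-1)$); indeed, applying the paper's own Hodge realisation to $[S]=2[\PP^2]-[C]$ would force $[H^\bullet(S)]=2[H^\bullet(\PP^2)]-[H^\bullet(C)]$ in $\gghsf$, whose right-hand side carries the term $-[H^1(C)]$ with $H^1(C)$ of rank $20$ and weight $1$, which cannot cancel. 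Your two errors are consistent with each other — substituting the false expression for $[S]$ into the correct relation $[\Bl_P(X)]=(1+\LL^2)[\PP^2]+\LL[S]$ reproduces your stratification formula — which is why the final answer comes out right, but the derivation is invalid. The correct route, and the content of the quadric-bundle theorem of \cite{KS18}, is to use the odd multisection and Springer's theorem to produce a rational section of $\pi$ and then obtain $[\Bl_P(X)]=(1+\LL^2)[\PP^2]+\LL[S]$ by relative projection from that section (fibrewise: a quadric surface with a rational point is $\PP^2$ blown up in the length-$2$ discriminant scheme with the two lines through the point contracted); this cannot be replaced by naive stratification over $\PP^2=(\PP^2\setminus C)\sqcup C$.
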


In light of Proposition \ref{prop: motives of certain cubic fourfolds}, Pfaffian cubic fourfolds seem like a good place to start looking for $\LL$-equivalent cubic fourfolds. However, very general Pfaffian cubic fourfolds have no Fourier--Mukai partners. This result follows immediately from \cite[Proposition 2.6]{FL23}, and also from \cite[Theorem 1.1]{PertFMPartners21} combined with \cite[Proposition 1.10]{Ogu01}. Here, we give a proof using Theorem \ref{thm: counting formula}.

\begin{proposition}
    Let $X$ be a cubic fourfold that is a very general member of $\mathcal{C}_{14}$, that is, such that $\rk A(X) =2$. Then $\FM(X) =\{X\}$.
\end{proposition}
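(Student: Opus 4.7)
The plan is to invoke Theorem \ref{thm: counting formula} directly. First I would verify its hypotheses: a Pfaffian cubic fourfold $X \in \mathcal{C}_{14}$ comes equipped with the classical associated K3 surface $S$ of degree $14$ together with a Hodge isometry $T(X) \simeq T(S)(-1)$, so case (2) of Proposition \ref{prop: derived torelli criteria} shows that Assumption \ref{as: Derived Torelli for cubic fourfolds} holds. Moreover, since $\NS(S) = \langle 14 \rangle$ is primitively embedded in the unimodular lattice $\klat$, we get $\disc(T(X)) = \disc(T(S)) = 14$, which is coprime to $3$.

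Next I would compute $N \coloneqq A(X)_\mathrm{prim}$. Since $\rk A(X) = 2$ and $\disc A(X) = 14$, the Gram matrix of $A(X)$ in a basis beginning with $h^2$ is, up to isometry, $\bigl(\begin{smallmatrix} 3 & 1 \\ 1 & 5 \end{smallmatrix}\bigr)$ (one checks the only alternative $a=2$, $b=6$ gives an isometric lattice), and the primitive vector orthogonal to $h^2$ is $-h^2 + 3T$ of square $42$. Thus $N \simeq \langle 42 \rangle$. The genus $\mathcal{G}(\langle 42 \rangle)$ is a singleton, since positive-definite rank-one lattices are classified by their discriminant, and $\langle 42 \rangle$ evidently contains no vectors of square $2$ or $6$, so $\mathcal{H}(N) = \{\langle 42 \rangle\}$.

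The counting formula then reduces to the single term
\[
\#\FM(X) = |O(\langle 42 \rangle) \backslash O(A_{T(X)}) / O_{\mathrm{Hodge}}(T(X))|.
\]
Clearly $O(\langle 42 \rangle) = \{\pm \id\}$. The main remaining computation is $O(A_{T(X)})$: using $T(X) \simeq T(S)(-1)$ and the unimodularity of $\klat$, I would identify $A_{T(X)} \simeq \Z/14\Z$ with quadratic form $q(1) \equiv 1/14 \pmod{2\Z}$, then decompose it into primary components $\Z/2\Z \oplus \Z/7\Z$. The $\Z/2\Z$-factor has trivial isometry group, and on $\Z/7\Z$ the condition $a^2 \equiv 1 \pmod 7$ yields $a \in \{\pm 1\}$, so $O(A_{T(X)}) = \{\pm \id\}$ is a two-element set.

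Finally, under the splitting $A_N \simeq A_{T(X)} \oplus C_3$ of Remark \ref{rem: isometry actions}, the isometry $-\id \in O(N)$ induces $-\id \in O(A_N)$, whose projection onto the first factor is $-\id \in O(A_{T(X)})$. Thus the left action of $O(\langle 42 \rangle)$ on $O(A_{T(X)}) = \{\pm \id\}$ is already transitive, so the double coset has a single element regardless of $O_{\mathrm{Hodge}}(T(X))$. This gives $\#\FM(X) = 1$, i.e.\ $\FM(X) = \{X\}$. The main obstacle, really just a bookkeeping one, is the explicit identification of the discriminant form $A_{T(X)}$ and the induced $O(N)$-action on $O(A_{T(X)})$.
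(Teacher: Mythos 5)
Your proof is correct and follows essentially the same route as the paper: verify the hypotheses of Theorem \ref{thm: counting formula}, observe that $N = A(X)_{\mathrm{prim}}$ has rank one and is alone in its genus, compute $O(A_{T(X)}) = \{\pm\id\}$ from the cyclicity of the order-$14$ discriminant group, and conclude the double coset is a singleton. The only (harmless) difference is in the last step: you collapse the two isometries using the left action of $O(N) = \{\pm\id\}$, whereas the paper uses the right action, noting that $\pm\id_{T(X)}$ always lie in the image of $O_{\mathrm{Hodge}}(T(X)) \to O(A_{T(X)})$; both arguments are valid.
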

\begin{proof}
    Since $A(X) \simeq K_{14}$ has rank $2$, it follows that $\rk A(X)_\text{prim} = 1$. Therefore, $N\coloneqq A(X)_\text{prim}$ is clearly unique in its genus. 
    Moreover, we have $\disc(T(X)) = 14$, and $A_{T(X)}$ is cyclic, since it is naturally a subgroup of the cyclic group $A_N$ by Remark \ref{rem: isometry actions}. This implies that $A_{T(X)}$ has precisely $2$ isometries, namely $\pm \id_{A_{T(X)}}$. 
    Indeed, if $x\in A_{T(X)}$ is a generator, then for any $\sigma\in O(A_{T(X)})$, we must have $\sigma(x) = ax$ for some $a\in (\Z/14\Z)^\times$. However, since $x^2 = \sigma(x)^2 = a^2x^2 \pmod {2\Z}$, 
    it follows that we must have $a^2 \equiv 1 \pmod{14}$. Since $\pm 1$ are the only elements of $(\Z/14\Z)^\times$ whose square equals $1$, we find $O(A_{T(X)}) = \{\pm \id_{A_{T(X)}}\}$.
    Since both of these isometries are in the image of the group homomorphism $O_{\mathrm{Hodge}}(T(X)) \to O(A_{T(X)})$, it follows from Theorem \ref{thm: counting formula} that $\#\FM(X) = 1$.     
\end{proof}

Similarly, one might expect examples of $\LL$-equivalent cubic fourfolds when studying cubic fourfolds containing two disjoint planes. If $X$ is a cubic fourfold containing two planes, which is very general with this property, then the algebraic part $A(X)$ of $H^4(X,\Z)$ is isometric to the lattice
\begin{equation} \label{eq: bilinear form with two planes}
     \left(
    \begin{matrix}
        3 & 1 & 1 \\
        1 & 3 & 0 \\
        1 & 0 & 3
    \end{matrix}
    \right)
\end{equation}

\begin{remark} 
    Some of the lattice theoretical computations involving the next examples were done with OSCAR \cite{OSCAR, OSCAR-book}.
    We briefly recall here the behaviour of the key functions we used.
    \begin{itemize}
    \item \texttt{primitive\_embeddings(L, N)} (developed by Stevell Muller): computes primitive embeddings of the lattice \texttt{N} into \texttt{L} (optionally with classification data such as orthogonal complements).
    \item \texttt{vectors\_of\_square\_and\_divisibility(L, m, d)} (developed by Stevell Muller): returns vectors $v\in L$ with $v^2=m$ and $\dv(v)=d$.
    \item \texttt{genus\_representatives(L)} (developed by Simon Brandhorst and Stevell Muller): returns a set of lattices representing the genus of \texttt{L}.
    \end{itemize}
The complete OSCAR code for checking the data of this paper can be found in the file ancillary to this paper.
\end{remark}

\begin{proposition} \label{prop: two planes, no fm partners}
    Let $X$ be a cubic fourfold whose algebraic lattice $A(X)$ is isometric to \eqref{eq: bilinear form with two planes}. Then $\FM(X) = \left\{X\right\}$.
\end{proposition}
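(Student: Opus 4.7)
The plan is to apply Proposition~\ref{prop: correspondence between FM partners and primitive embeddings}, which reduces $\#\FM(X)$ to counting $O_{\mathrm{Hodge}}(T(X))$-orbits of primitive embeddings $T(X) \hookrightarrow \cublat$ with orthogonal complement in $\mathcal{H}(N)$, once Assumption~\ref{as: Derived Torelli for cubic fourfolds} has been established. First I would verify the assumption: the two disjoint planes $P_1, P_2 \subset X$ satisfy $(h^2 - P_1) \cdot P_2 = 1 - 0 = 1$, which is odd, so Proposition~\ref{prop: motives of certain cubic fourfolds}(B) produces a K3 surface $S$ with a Hodge isometry $T(S)(-1) \simeq T(X)$, and Proposition~\ref{prop: derived torelli criteria}(2) then gives Assumption~\ref{as: Derived Torelli for cubic fourfolds}.

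A direct computation from the given Gram matrix yields $\disc(A(X)) = 21$ and, for $N = A(X)_{\mathrm{prim}} = (h^2)^\perp \cap A(X)$, rank $2$, signature $(2,0)$ and $\disc(N) = 63$. Since $H^4(X,\Z)$ is unimodular, $\disc(T(X)) = \disc(A(X)) = 21$, which is divisible by $3$; in particular, neither Theorem~\ref{thm: counting formula} nor Proposition~\ref{prop: counting formula div 3} applies and one has to go back to Proposition~\ref{prop: correspondence between FM partners and primitive embeddings} directly.

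Next I would use OSCAR to verify that the genus of $N$ consists of a single isometry class, and that $N$ contains no vector of square $2$ nor any vector of square $6$ with divisibility $3$; these are exactly the conditions needed to conclude $\mathcal{H}(N) = \{N\}$. Combined with the previous step, this forces $A(Y)_{\mathrm{prim}} \simeq N$ for every Fourier--Mukai partner $Y$ of $X$, so it remains to enumerate primitive embeddings $T(X) \hookrightarrow \cublat$ with orthogonal complement isometric to $N$ modulo $O_{\mathrm{Hodge}}(T(X))$. By Lemma~\ref{lem: primitive embeddings rosetta stone}, these correspond to double cosets of admissible gluings of $A_{T(X)}$ and $A_N$, and I would use \texttt{primitive\_embeddings} to check that exactly one such orbit occurs, namely the one induced by $X$ itself.

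The main obstacle is the $3$-torsion: because $3 \mid \disc(T(X))$, the clean splitting $A_N \simeq A_{T(X)}(-1) \oplus C_3$ from Remark~\ref{rem: isometry actions} is unavailable, and the $3$-Sylow subgroups of $A_{T(X)}$ and $A_N$ interact non-trivially. This is precisely the subtlety that makes the representation-theoretic counting formulas of Section~\ref{sec: counting formula} unusable, and where the OSCAR computation becomes essential for carrying out the finite case analysis of gluings and isotropic subgroups.
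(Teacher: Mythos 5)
Your overall strategy (reduce to Proposition \ref{prop: correspondence between FM partners and primitive embeddings} and count embeddings computationally) matches the paper's, and your verification of Assumption \ref{as: Derived Torelli for cubic fourfolds} via $(h^2-P_1)\cdot P_2=1$ is a welcome detail the paper leaves implicit. However, there is a genuine gap at the step where you conclude that every Fourier--Mukai partner $Y$ has $A(Y)_{\mathrm{prim}}\simeq N$. The only tool available for that conclusion is Lemma \ref{lem: genus is fixed}, whose proof relies on the splitting $A_{N'}\simeq A_{T(Y)}(-1)\oplus C_3'$ and hence on the hypothesis $3\nmid\disc(T(X))$ --- which fails here, since $\disc(T(X))=21$. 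Checking that $\mathcal{H}(N)=\{N\}$ does not suffice: a priori a primitive embedding $T(X)\hookrightarrow\cublat$ could have orthogonal complement in an entirely different genus (Lemma \ref{lem: gluing group orders} only forces $|G|\in\{7,21\}$, so complements of discriminant $7$ are not excluded by general arguments; compare Remark \ref{rem: the other FM partners for L29}, where complements outside the genus of $A(X)_{\mathrm{prim}}$ genuinely occur). What must be verified --- and what the paper does --- is that \emph{every} primitive embedding $T(X)\hookrightarrow\cublat$ has orthogonal complement isometric to $N$; restricting your \texttt{primitive\_embeddings} enumeration to complements isometric to $N$ begs the question.

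A second, smaller issue: your final count is of orbits modulo $O_{\mathrm{Hodge}}(T(X))$, a group depending on the Hodge structure of the particular $X$ that OSCAR cannot see, and the proposition makes no assumption on it (unlike Propositions \ref{prop: L10 examples} and \ref{prop: L29 examples}). The paper sidesteps this: since $\disc(N)=63$, Lemma \ref{lem: gluing group orders} forces $|G|=21=\disc(T(X))$, so the gluing group is all of $A_{T(X)}$ and the splitting $A_N\simeq A_{T(X)}(-1)\oplus C_3$ \emph{does} hold after all (contrary to your last paragraph); the count then becomes $|O(N)\backslash O(A_{T(X)})/O_{\mathrm{Hodge}}(T(X))|$, and surjectivity of $O(N)\to O(A_N)$ makes the $O(N)$-action on $O(A_{T(X)})$ transitive, so the answer is $1$ irrespective of $O_{\mathrm{Hodge}}(T(X))$. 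You would need to incorporate both points to close the argument.
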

\begin{proof}
    Since $\disc(T(X)) = \disc(A(X))= 21$ is divisible by $3$, Theorem \ref{thm: counting formula} is not applicable in this situation. Therefore, we use the counting algorithm of \cite{BBM25}.

    First, we determine that there is a unique lattice $N$ that arises as an orthogonal complement of an embedding $T(X)\hookrightarrow \cublat$. Explicitly, this lattice is given by 
    \[
    N \coloneqq A(X)_\text{prim}\simeq 
    \left( 
    \begin{matrix}
        12 & -3 \\ -3 & 6
    \end{matrix}
    \right).
    \]
    Since $\disc(N) = 63$, it follows from Lemma \ref{lem: gluing group orders} that the gluing group $G = A_{T(X)}$, hence $A_{N} \simeq A_{T(X)}(-1)\oplus C_3$, where $C_3$ is one of the two finite quadratic modules of order $3$. 
    Following the same reasoning as in the proof of Theorem \ref{thm: counting formula}, we obtain 
    \[
    \#\FM(X) = |O(N)\backslash O(A_{T(X)}) / O_{\mathrm{Hodge}}(T(X))|, 
    \]
    where the action of $O(N)$ on $O(A_{T(X)})$ is the one defined in Remark \ref{rem: isometry actions}.
    The natural map $O(N) \to O(A_N)$ is surjective. Since $O(A_N) \simeq O(A_{T(X)}(-1))\oplus O(C_3)$, it follows that $O(N)$ acts transitively on $O(A_{T(X)})$. Therefore, we have $\#\FM(X) = 1$, as required. 
\end{proof}

We now turn our attention to the lattices of the form

\[
    L_n \coloneqq 
    \left(
        \begin{matrix}
            3 & 1 & 1 \\
            1 & 3 & 0 \\
            1 & 0 & n
        \end{matrix}
    \right),
\]
where $n\geq 1$. 
If $X$ is a cubic fourfold with $A(X)\simeq L_n$, then $X$ satisfies property B of Proposition \ref{prop: motives of certain cubic fourfolds}, so we are able to compute the class of $X$ in $\gravc$. 

Importantly, if $(S_1$, $S_2$, $S_3)$ is an ordered $\Z$-basis for $A(X)$ whose associated Gram matrix is $L_n$, then $S_1$ is the square of a hyperplane class if and only if $n$ is odd. On the other hand, if $n$ is even, then $S_2$ is the square of a hyperplane class.

By Proposition \ref{prop: two planes, no fm partners}, $X$ has no Fourier--Mukai partners when $n = 3$. Also, since the discriminant of $L_{n}$ is $\disc(L_n) = 8n-3$, we may use Theorem \ref{thm: counting formula} to compute the number of Fourier--Mukai partners of $X$ whenever $n$ is not divisible by $3$. 

\begin{proposition}\label{prop: L10 examples}
    Let $X$ be a cubic fourfold such that $A(X) \simeq L_{10}$.
    Assume moreover that $O_{\mathrm{Hodge}}(T(X)) \simeq \Z/2\Z$.
    Then $\#\FM(X) = 2$.
    Moreover, the unique non-trivial Fourier--Mukai partner $Y$ of $X$ has $A(Y)_\text{prim}\simeq A(X)_\text{prim}$. In addition, $X$ and $Y$ are trivially $\LL$-equivalent.
\end{proposition}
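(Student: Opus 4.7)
The plan is to apply Theorem \ref{thm: counting formula} to show $\#\FM(X) = 2$, and then to deduce the trivial $\LL$-equivalence by invoking property B of Proposition \ref{prop: motives of certain cubic fourfolds} for both $X$ and its Fourier--Mukai partner. First I would verify the hypotheses of the Counting Formula: since $\disc(L_{10}) = 77$ and $T(X)$ is the orthogonal complement of $A(X)$ in the unimodular lattice $H^4(X,\Z)$, one has $\disc(T(X)) = 77$, coprime to $3$. The basis vector $S_1 \in L_{10}$ (of square $3$, pairing $1$ with the hyperplane class $h^2 = S_2$) is represented by a plane $P \subset X$ via Voisin's integral Hodge conjecture, and $(h^2 - S_1) \cdot S_3 = -1$ is odd. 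Property B of Proposition \ref{prop: motives of certain cubic fourfolds} then produces a K3 surface $S_X$ with $T(S_X)(-1) \simeq T(X)$, and Proposition \ref{prop: derived torelli criteria}(2) establishes that $X$ satisfies Assumption \ref{as: Derived Torelli for cubic fourfolds}.

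Next I would carry out the lattice-theoretic computations feeding into Theorem \ref{thm: counting formula}. Computing $(S_2)^\perp$ inside $L_{10}$ yields $N \coloneqq A(X)_\mathrm{prim}$ with Gram matrix $\left(\begin{smallmatrix} 24 & -3 \\ -3 & 10 \end{smallmatrix}\right)$, discriminant group $A_N \simeq C_{231}$, and isometry group $O(N) = \{\pm \id\}$ (immediate from the positivity bounds on $24 a^2 - 6 a c + 10 c^2$). An OSCAR check will establish $\mathcal{G}(N) = \{N\}$ together with the absence in $N$ of vectors of square $2$ or of vectors of square $6$ and divisibility $3$ in $\cublat$, so $\mathcal{H}(N) = \{N\}$. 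Since $A_{T(X)}$ is cyclic of order $77 = 7 \cdot 11$, one finds $O(A_{T(X)}) \simeq (\Z/2\Z)^2$, the two nontrivial factors acting by $-1$ on the respective primary components. Through the decomposition $A_N \simeq A_{T(X)}(-1) \oplus C_3$ of Remark \ref{rem: isometry actions}, the images of $O(N) = \{\pm \id\}$ and of $O_\mathrm{Hodge}(T(X)) = \{\pm \id\}$ in $O(A_{T(X)})$ both equal the diagonal subgroup of order $2$. The double coset in the Counting Formula therefore has cardinality $2$, giving $\#\FM(X) = 2$.

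Finally, let $Y$ denote the unique non-trivial Fourier--Mukai partner of $X$. Lemma \ref{lem: genus is fixed} forces $A(Y)_\mathrm{prim} \simeq N$; to upgrade this to $A(Y) \simeq L_{10}$, I would note that both $A(Y)$ and $L_{10}$ are orthogonal complements in the odd unimodular lattice $\fullcublat$ of Hodge-isometric transcendental lattices, so they share signature and discriminant form and lie in the same genus, whereupon an OSCAR computation confirming $\mathcal{G}(L_{10}) = \{L_{10}\}$ finishes this step. Then $Y$ likewise satisfies property B of Proposition \ref{prop: motives of certain cubic fourfolds}, producing $[Y] = 1 + \LL^2 + \LL^4 + [S_Y]\LL$ with $T(S_Y)(-1) \simeq T(Y) \simeq T(X) \simeq T(S_X)(-1)$. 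Since $O_\mathrm{Hodge}(T(X)) \simeq \Z/2\Z$ forces $\End_\mathrm{Hodge}(T(X)) \simeq \Z$, the K3 surfaces $S_X, S_Y$ have rank-$2$ Picard lattice and transcendental lattice with trivial Hodge endomorphism ring; a Fourier--Mukai partner count for such K3 surfaces (an analogue of Theorem \ref{thm: counting formula}, equivalently Oguiso's formula) will show that $S_X$ has no nontrivial partners, so Derived Torelli for K3 surfaces (Theorem \ref{thm: derived torelli for k3s}) gives $S_X \simeq S_Y =: S$. Thus $[X] = [Y] = 1 + \LL^2 + \LL^4 + [S]\LL$ in $\gravc$, exhibiting a trivial $\LL$-equivalence, and uniqueness of $S$ follows from Torelli. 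I expect the main technical obstacle to be the three singleton statements $\mathcal{H}(N) = \{N\}$, $\mathcal{G}(L_{10}) = \{L_{10}\}$, and $\#\FM(S_X) = 1$: each is a purely lattice-theoretic check that will be handled with OSCAR or, for the last one, a direct analysis of the rank-$2$ hyperbolic Picard lattice of $S_X$.
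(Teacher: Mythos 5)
Your overall strategy coincides with the paper's: verify the hypotheses of Theorem \ref{thm: counting formula}, compute $N = A(X)_\mathrm{prim} \simeq \left(\begin{smallmatrix}24 & -3\\ -3 & 10\end{smallmatrix}\right)$ and the double coset to get $\#\FM(X)=2$, then pin down $A(Y)$, invoke property B of Proposition \ref{prop: motives of certain cubic fourfolds} for both $X$ and $Y$, and identify the two K3 surfaces via the Derived Torelli Theorem. Your explicit verification of Assumption \ref{as: Derived Torelli for cubic fourfolds} via Proposition \ref{prop: derived torelli criteria}(2) is a welcome addition, and your double-coset computation (both $O(N)$ and $O_\mathrm{Hodge}(T(X))$ mapping onto the diagonal $\Z/2$ inside $O(A_{T(X)})\simeq(\Z/2)^2$) is correct and matches the paper's count.

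However, two of your deferred OSCAR claims are factually wrong, and one of them leaves a genuine gap. First, $\mathcal{G}(N)$ is \emph{not} a singleton: it contains a second class $N'\simeq\left(\begin{smallmatrix}6 & -3\\ -3 & 40\end{smallmatrix}\right)$. This one self-corrects, since $N'$ has a vector of square $6$ and divisibility $3$, so $\mathcal{H}(N)=\{N\}$ still holds and your count survives. Second, and more seriously, $\mathcal{G}(L_{10})$ is \emph{not} a singleton either — it consists of five classes — so the step where you upgrade $A(Y)_\mathrm{prim}\simeq N$ to $A(Y)\simeq L_{10}$ collapses: your only argument for it is the (false) claim that the genus contains one lattice, and without $A(Y)\simeq L_{10}$ you cannot apply property B to $Y$ to obtain $[Y]=1+\LL^2+\LL^4+[S']\LL$. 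The paper closes this by observing that among the five genus representatives only $L_{10}$ and one other lattice $L_{10}'$ contain both a vector of square $3$ and a vector of square $10$, and that $L_{10}'$ is excluded because the orthogonal complement of its (unique up to sign) square-$3$ vector is the odd lattice $\left(\begin{smallmatrix}7 & 0\\ 0 & 33\end{smallmatrix}\right)$, whereas $A(Y)_\mathrm{prim}\simeq N$ is even. You need an argument of this kind. A smaller point: your assertion that $O_\mathrm{Hodge}(T(X))\simeq\Z/2\Z$ forces $\End_\mathrm{Hodge}(T(X))\simeq\Z$ is not justified and is in any case unnecessary — the K3 Fourier--Mukai count only needs $O_\mathrm{Hodge}(T(S))\simeq\Z/2\Z$ together with the explicit $\NS(S)\simeq\left(\begin{smallmatrix}2 & 7\\ 7 & -14\end{smallmatrix}\right)$, which is how the paper concludes $S\simeq S'$ via \cite[Theorem 2.3]{HLOY02}.
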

\begin{proof}
    Since $\disc(T(X)) = \disc(L_{10}) = 77$ is not divisible by $3$, we may compute $\#\FM(X)$ using Theorem \ref{thm: counting formula}. 
    A direct computation reveals 
    \[
        N\coloneqq A(X)_\text{prim} \simeq 
        \left(
            \begin{matrix}
                24 & -3 \\ -3 & 10
            \end{matrix}
        \right).
    \]
    
    This lattice is not unique in its genus. Indeed, we have 
    $
    \mathcal{G}(N) = \left\{N,N'\right\}
    $
    where 
    \[
        N' \coloneqq 
        \left(
            \begin{matrix}
                6 & -3 \\ -3 & 40
            \end{matrix}
        \right).
    \]
    Note that $N'$ contains a vector $v$ with $v^2 = 6$ and $\dv(v) = 3$. Therefore, $\mathcal{H}(N) = \left\{N\right\}$.
    Since $A_{T(X)}$ has order $77$, it follows that $O(A_{T(X)})\simeq \Z/2\Z \oplus \Z/2\Z$. 
    However, we have $O(N) \simeq \Z/2\Z$, hence 
    \[
    |O(N)\backslash O(A_{T(X)}) / O_{\mathrm{Hodge}}(T(X))| = 2. 
    \]
    That is, by Theorem \ref{thm: counting formula}, $X$ has a unique Fourier--Mukai partner $Y$, and we have $A(Y)_{\text{prim}} \simeq A(X)_{\text{prim}} \simeq N$. 
    
    It remains to be shown that $X$ and $Y$ are $\LL$-equivalent. To prove this, we argue that $A(X)\simeq A(Y)$. Indeed, since $T(X)\simeq T(Y)$, and $A(X) = T(X)^\perp \subset H^4(X,\Z)$, it follows that $A(X)$ and $A(Y)$ are in the same genus. 
    However, the genus of $L_{10}$ consists of 5 lattices.  Since $N \oplus \langle 3 \rangle \simeq A(Y)_\text{prim}\oplus \langle 3 \rangle \subset A(Y)$, contains both a vector of square $10$ and a vector of square $3$. Of the 5 lattices in $\mathcal{G}(L_{10})$, only $L_{10}$ and the lattice
    \[
    L_{10}' \coloneqq 
    \left(
        \begin{matrix}
            3 & -1 & 0 \\
            -1 & 4 & 0 \\
            0 & 0 & 7
        \end{matrix}
    \right)
    \]
    contain both a vector of square $10$ and a vector of square $3$.
    However, $L_{10}'$ contains exactly 1 vector of square $3$, and its orthogonal complement is 
     \[
        \left(
            \begin{matrix}
                7 & 0 \\ 0 & 33
            \end{matrix}
        \right),
    \]
    which is not an even lattice. However, since $A(Y)_\text{prim}\subset A(Y)$ is an even lattice which is the orthogonal complement of a vector of square $3$, $A(Y)$ cannot be isometric to $L_{10}'$ and thus it must be isometric to $L_{10}$. In particular, both $X$ and $Y$ satisfy property B of Proposition \ref{prop: motives of certain cubic fourfolds}, hence there exist K3 surfaces $S, S'$ such that 
    \[
    [X] = 1+ \LL^2 + \LL^4 + [S]\LL, \qquad [Y] = 1+ \LL^2 + \LL^4 + [S']\LL. 
    \]
    Moreover, we also obtain a Hodge isometry $T(S)(-1)\simeq T(X)\simeq T(Y) \simeq T(S')(-1)$ from Proposition \ref{prop: motives of certain cubic fourfolds}. In particular $S$ and $S'$ are Fourier--Mukai partners. 
    However, we compute that 
    \[
    \NS(S) \simeq 
    \left(
        \begin{matrix}
            2 & 7 \\
            7 & -14
        \end{matrix}
    \right),
    \]
    and one can compute using \cite[Theorem 2.3]{HLOY02} that $S$ has no non-trivial Fourier--Mukai partner, hence $S\simeq S'$.
    In conclusion, we have
    \[
    [X] = [Y] \in \gravc,
    \]
    so that $X$ and $Y$ are trivially $\LL$-equivalent.
\end{proof}

\begin{remark}
    The positive integer $n=10$ is the smallest for which the lattice $L_n$ gives an example. The next ones are $n=11, 17, 26, 28$. A complete classification is not known, but may be possible with the same techniques, as well as experimenting with other lattices. It would also be interesting to give a geometric description of such cubic fourfolds.
\end{remark}

We now turn our attention to Pfaffian cubic fourfolds, and consider the lattice 
\[
    L_{a,b}\coloneqq 
        \left(
            \begin{matrix}
                14 & a \\ a & 2b
            \end{matrix}
        \right),
\]
where $a,b\in \Z$ are chosen in such a way that $L_{a,b}$ is a non-degenerate positive-definite lattice. We will also write $A_{a,b}\coloneqq A_{L_{a,b}}.$

\begin{proposition} \label{prop: L29 examples}
    Let $X$ be a cubic fourfold such that $A(X) \simeq L_{2,9}$.
    Assume moreover that $O_{\mathrm{Hodge}}(T(X)) \simeq \Z/2\Z$.
    Then $\#\FM(X,L_{2,9}) = 2$. That is, $X$ has a unique Fourier--Mukai partner $Y$ with the property that $A(Y)_\mathrm{prim} \simeq A(X)_\mathrm{prim}$. Moreover, $X$ and $Y$ are trivially $\LL$-equivalent.
\end{proposition}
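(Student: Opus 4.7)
Since $\disc(L_{2,9}) = 14 \cdot 18 - 4 = 248 = 2^3 \cdot 31$ is coprime to $3$ (while $\disc(T(X))$ is not, by the gluing relation of Lemma \ref{lem: gluing group orders}), the natural tool here is Proposition \ref{prop: counting formula div 3} rather than Theorem \ref{thm: counting formula}. Setting $N := A(X)_\mathrm{prim} \simeq L_{2,9}$ and taking $N' = N$, Proposition \ref{prop: counting formula div 3} gives
\[
\#\FM(X, L_{2,9}) = \bigl|O(L_{2,9}) \backslash O(A_N) / O_\mathrm{Hodge}(T(X))\bigr|,
\]
with actions as in Remark \ref{rem: isometry actions}.

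The next step is to evaluate each of the three groups. A short-vector analysis of the positive-definite rank-two lattice $L_{2,9}$ (whose shortest vectors $\pm(1,0)$ have square $14$) yields $O(L_{2,9}) = \{\pm\id\}$. By hypothesis, $O_\mathrm{Hodge}(T(X)) \simeq \Z/2\Z$ is generated by $-\id_{T(X)}$. Both of these groups therefore project to $\{\pm\id_{A_N}\}$ under the natural maps into $O(A_N)$ (using the decomposition $A_{T(X)} \simeq A_N(-1) \oplus C_3$ from the second half of Remark \ref{rem: isometry actions}). A Smith normal form calculation identifies the underlying abelian group of $A_N$ as $\Z/2\Z \oplus \Z/4\Z \oplus \Z/31\Z$, and an OSCAR computation of the resulting finite quadratic module (as in the proof of Proposition \ref{prop: L10 examples}) is expected to give $|O(A_N)| = 4$. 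Since $-\id$ is central in $O(A_N)$, the double coset then has cardinality $4/2 = 2$, yielding $\#\FM(X, L_{2,9}) = 2$.

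To establish the trivial $\LL$-equivalence, I would first argue that both $X$ and the unique non-trivial partner $Y$ lie in $\mathcal{C}_{14}$, i.e. are Pfaffian. Since $A(X)_\mathrm{prim} \simeq A(Y)_\mathrm{prim} \simeq L_{2,9}$, both $A(X)$ and $A(Y)$ are overlattices of $\langle 3 \rangle \oplus L_{2,9}$ inside $\fullcublat$, and a genus-theoretic analysis (in the style of the end of the proof of Proposition \ref{prop: L10 examples}) combined with the fact that $L_{2,9}$ contains a distinguished vector of square $14$ should force $A(X) \simeq A(Y)$ and exhibit the required rank-$2$ discriminant-$14$ sublattice of $A(X)$ containing $h^2$. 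Proposition \ref{prop: motives of certain cubic fourfolds} then produces K3 surfaces $S_X, S_Y$ with
\[
[X] = 1 + \LL^2 + \LL^4 + [S_X]\LL, \qquad [Y] = 1 + \LL^2 + \LL^4 + [S_Y]\LL,
\]
together with Hodge isometries $T(S_X)(-1) \simeq T(X) \simeq T(Y) \simeq T(S_Y)(-1)$, so $S_X$ and $S_Y$ are Fourier--Mukai partners of K3 surfaces.

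The final step is to conclude that $S_X \simeq S_Y$. For this I would compute $\NS(S_X)$ explicitly as the orthogonal complement of $T(X)(-1)$ in $\klat$ (a rank-two positive-definite lattice of computable discriminant), and then apply the counting formula \cite[Theorem 2.3]{HLOY02} to verify that $S_X$ has no non-trivial Fourier--Mukai partner. This gives $S_X \simeq S_Y$ and hence $[X] = [Y]$ in $\gravc$, which is the asserted trivial $\LL$-equivalence. I expect the main obstacle to be the lattice-theoretic verification that $A(X)$ is Pfaffian and isometric to $A(Y)$: while the short-vector and discriminant-form calculations are routine with OSCAR, the overlattice analysis requires a careful enumeration of the possible gluings of $\langle 3 \rangle$ with $L_{2,9}$ and a check that the chosen overlattice contains a rank-$2$ discriminant-$14$ sublattice.
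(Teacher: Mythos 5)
Your counting argument is exactly the paper's: it likewise invokes Proposition \ref{prop: counting formula div 3} (because $\disc(L_{2,9})=248$ is coprime to $3$ while $\disc(T(X))=3\cdot 248$ is not), finds $O(L_{2,9})\simeq\Z/2\Z$ and $O(A_{L_{2,9}})\simeq\Z/2\Z\oplus\Z/2\Z$ (confirming the order $4$ you "expect" from OSCAR), and concludes that the double coset space has two elements. The second half also follows the paper's route: both $X$ and its partner $Y$ are claimed to be Pfaffian, Proposition \ref{prop: motives of certain cubic fourfolds}(A) then produces Fourier--Mukai partner K3 surfaces $S$, $S'$, and the explicit $\NS(S)$ together with \cite[Theorem 2.3]{HLOY02} forces $S\simeq S'$, hence $[X]=[Y]$.

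There is, however, a genuine problem with the one step you flag as the ``main obstacle,'' namely verifying $X,Y\in\mathcal{C}_{14}$ by exhibiting a saturated rank-$2$ discriminant-$14$ sublattice of $A(X)$ containing $h^2$. Since $\gcd(3,248)=1$, the gluing group of $\langle h^2\rangle\oplus A(X)_{\mathrm{prim}}$ inside $A(X)$ is trivial, so the only overlattice is $A(X)\simeq\langle 3\rangle\oplus L_{2,9}$; this does give you $A(X)\simeq A(Y)$ for free, but it also means that every saturated rank-$2$ sublattice of $A(X)$ containing $h^2$ has the form $\langle 3\rangle\oplus\Z v$ for a primitive $v\in L_{2,9}$, hence discriminant $3v^2\in\{42,54,84,\dots\}$ --- never $14$ (nor $8$). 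So the enumeration you propose cannot produce the discriminant-$14$ labelling, and membership in $\mathcal{C}_{14}$ does not follow this way. To be fair, the paper simply asserts ``$Y\in\mathcal{C}_{14}$'' with no justification, so your write-up faithfully mirrors its structure; but the concrete verification you sketch is precisely the point where the argument does not close, and the entire ``trivially $\LL$-equivalent'' conclusion rests on it, since Proposition \ref{prop: motives of certain cubic fourfolds} only applies to Pfaffian cubics or cubics containing a plane with an odd-intersecting cycle, and a cubic with $A(X)\simeq\langle3\rangle\oplus L_{2,9}$ satisfies neither hypothesis as stated.
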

\begin{proof}
    Since $\disc(L_{2,9}) = 248$ is not divisible by $3$, we may use Proposition \ref{prop: counting formula div 3} to compute
    \[
    \FM(X,L_{2,9}) = |O(L_{2,9})\backslash O(A_{2,9}) / O_\mathrm{Hodge}(T(X))|.
    \]
    We find $O(A_{2,9}) \simeq \Z/2\Z\oplus \Z/2\Z$ and $O(L_{2,9}) \simeq \Z/2\Z$. Thus, since $O_\mathrm{Hodge}(T(X)) \simeq \Z/2\Z$ by assumption, we obtain 
    \[
    \FM(X,L_{2,9}) = 2.
    \]
    Denote by $Y$ the unique Fourier--Mukai partner of $X$ with $A(Y)_\mathrm{prim} \simeq L_{2,9}$. Then we have $Y \in \mathcal{C}_{14}$, i.e. $Y$ is a Pfaffian cubic fourfold and thus satisfies Property A of Proposition \ref{prop: motives of certain cubic fourfolds}. In particular, we find K3 surfaces $S$, $S'$ such that 
    \[
    [X] = 1+ \LL^2 + \LL^4 + [S]\LL, \qquad [Y] = 1+ \LL^2 + \LL^4 + [S']\LL. 
    \]
    Moreover, $S$ and $S'$ are Fourier--Mukai partners. One can compute that 
    \[
    \NS(S) \simeq 
    \left(
    \begin{matrix}
        26 & 22 \\ 22 & -10
    \end{matrix}
    \right),
    \]
    but such a K3 surface never has non-trivial Fourier--Mukai partners, so we have $S\simeq S'$, hence $X$ and $Y$ are trivially $\LL$-equivalent.
\end{proof}

\begin{remark}\label{rem: the other FM partners for L29}
    A cubic fourfold $X$ as in Proposition \ref{prop: L29 examples} can be shown to have more Fourier--Mukai partners. For example, the genus of $L_{2,9}$ consists of three lattices:
    \[
    L_{2,9}=\left(
    \begin{matrix}
        14 & 2 \\ 2 & 18
    \end{matrix}
    \right),
    \quad 
    N_1\coloneqq \left(
    \begin{matrix}
        126 & 16 \\ 16 & 4
    \end{matrix}
    \right)\simeq 
    \left(
    \begin{matrix}
        62 & 0 \\ 0 & 4
    \end{matrix}
    \right),
    \quad
    N_2\coloneqq
    \left(
    \begin{matrix}
        126 & 2 \\ 2 & 2
    \end{matrix}
    \right).
    \]
    However, $N_2$ clearly contains a vector of square $2$, hence $N_2\notin \mathcal{H}(L_{2,9})$, and $X$ has no Fourier--Mukai partner $X'$ with $A(X')_\mathrm{prim}\simeq N_2$. 
    On the other hand, $N_1$ contains neither vectors of square 2 nor vectors of square 6 and divisibility 3. 
    One computes using Proposition \ref{prop: counting formula div 3} that $X$ has precisely $1$ Fourier--Mukai partner $X'$ with $A(X')_\mathrm{prim}\simeq N_1$. 

    Moreover, there is a primitive embedding $T(X)\hookrightarrow \cublat$ whose orthogonal complement is isometric to the lattice
    \[
    N_3 \coloneqq 
    \left(
    \begin{matrix}
        66 & 12 \\ 12 & 36
    \end{matrix}
    \right).
    \]
    Importantly, $N_3$ has discriminant $2242$, which is divisible by $3$. Since $N_3$ contains no vectors of square $6$ nor vectors of square $2$, it follows that $X$ has at least one Fourier--Mukai partner $X'$ with $A(X')_\mathrm{prim} \simeq N_3$. 
    Finally, the genus of $N_3$ consists of $N_3$ and the lattice 
    \[
    N_4 \coloneqq 
    \left(
    \begin{matrix}
        18 & -60 \\
        -60 & 324
    \end{matrix}
    \right),
    \]
    which again contains no vectors of square $6$ or $2$, hence $X$ has at least one Fourier--Mukai partner corresponding to $N_4$ as well.

    It is an interesting question whether the Fourier--Mukai partners of $X$ whose primitive algebraic lattices are not isometric to that of $X$ are $\LL$-equivalent to $X$.
\end{remark}

\begin{remark}
    The techniques of the proof of Proposition \ref{prop: L29 examples} can be used to prove that the next lattices which generate examples of trivially $\LL$-equivalent Pfaffian cubic fourfolds in precisely the same way as Proposition \ref{prop: L29 examples} are 
    $$L_{4,6}, \quad L_{5,6}, \quad L_{6,5}, \quad L_{6,10},\quad L_{8,6},\quad L_{9,8}.$$
\end{remark}

\section{Implications of $\LL$-equivalence}

The following proposition is the analogue for cubic fourfolds of \cite[Proposition 3.3]{Efi18}. We remark that for cubic fourfolds which satisfy Derived Torelli, this follows immediately from \cite[Theorem 1.1]{Huy16}.

\begin{proposition} \label{prop:Finitely many 4fold with iso Hodge}
    Let $X$ be a cubic fourfold. Then there exist finitely many isomorphism classes of cubic fourfolds $Y$ such that there is an isomorphism (but not necessarily an isometry) of Hodge structures $f\colon  H^4(X,\bbZ) \to H^4(Y,\bbZ)$.
\end{proposition}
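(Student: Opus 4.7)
The plan is to adapt Efimov's strategy \cite{Efi18} for K3 surfaces by using the Torelli theorem for cubic fourfolds. A Hodge isomorphism $f\colon H^4(X,\Z)\to H^4(Y,\Z)$ restricts to a Hodge isomorphism $T(X)\cong T(Y)$ on transcendental parts, since $T(-)$ is characterized as the minimal primitive sub-Hodge structure whose complexification contains $H^{3,1}$. So the task reduces to controlling the possible Hodge lattices that can occur as $T(Y)$ and then invoking Torelli.

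First, I would show that the rank, signature, and discriminant of $T(Y)$ are forced to agree with those of $T(X)$. The rank and signature are determined by the underlying rational Hodge structure, so they are preserved by any Hodge isomorphism. For the discriminant, the Hodge iso $H^4(X,\Z)\cong H^4(Y,\Z)$ yields the equality of classes $[H^4(X,\Z)(1)] = [H^4(Y,\Z)(1)]$ in $\gghs$ after applying the Tate twist. Combining Lemma \ref{lem: gluing group homomorphism} with Lemma \ref{lem: L equivalent cubic fourfolds have equal discriminants} then forces $\disc T(Y) = \disc T(X)$.

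By the classical Hermite--Minkowski finiteness theorem, there are only finitely many isometry classes of integral lattices with given rank, signature, and discriminant, hence only finitely many lattices $T$ can arise as $T(Y)$ up to isometry. For each such $T$, Nikulin's theory (Lemma \ref{lem: primitive embeddings rosetta stone}) gives finitely many isometry classes of primitive embeddings $T \hookrightarrow \fullcublat$, which bounds the isometry class of the full Hodge lattice $H^4(Y,\Z)$. One then applies the Torelli theorem for cubic fourfolds (Theorem \ref{thm: torelli for cubic fourfolds}) to conclude that each Hodge-isometry class of $H^4(Y,\Z)_{\mathrm{prim}}$ corresponds to at most one $Y$ up to isomorphism.

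The main obstacle is the refinement that, once the isometry class of $T(Y)$ is fixed, the Hodge-isomorphism class of $(T(Y),\phi_Y)$ gives rise to only finitely many Hodge-isometry classes. Equivalently, on the fixed abstract Hodge structure $(T(X),\phi_X)$, one must control the set of integral polarizations of discriminant $\disc T(X)$ modulo the action of $\mathrm{Aut}_{\mathrm{Hodge}}(T(X))$. This polarization-finiteness is the analog of the key technical step in Efimov's K3 argument: one exploits that the rational polarizations of a given rational K3-type Hodge structure form a controlled orbit under $\End_{\mathrm{Hodge}}(T(X)\otimes\Q)^\times$, so the integral polarizations with bounded discriminant form a finite set modulo Hodge isometries. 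This is delicate precisely when $T(X)$ carries a rich endomorphism algebra, and handling this case carefully is where the bulk of the work lies.
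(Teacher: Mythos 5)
Your skeleton matches the paper's proof: restrict a Hodge isomorphism to the transcendental part, fix the discriminant via Lemmas \ref{lem: gluing group homomorphism} and \ref{lem: L equivalent cubic fourfolds have equal discriminants}, reduce to counting Hodge-compatible integral pairings on a fixed transcendental Hodge structure modulo Hodge automorphisms, and finish with Torelli. But the central step is deferred rather than proved. You correctly isolate the crux --- that the set of integral symmetric pairings of discriminant $D$ on $T(X)(1)$ that are morphisms of Hodge structures must have only finitely many orbits under $\Aut_{\mathrm{Hodge}}(T(X)(1))$ --- and then write that ``handling this case carefully is where the bulk of the work lies'' without handling it. This is precisely where the paper does its work: it checks that $T(X)(1)$ satisfies the hypotheses of Zarhin's theorem, so that $\End_{\mathrm{Hodge}}(T(X)(1))\otimes \Q$ is a totally real or CM field $F$, attaches to each admissible pairing $\beta$ an element $a(\beta)\in F$ relative to a reference pairing, and runs Efimov's argument from \cite[Proposition 3.3]{Efi18} verbatim. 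Without invoking Zarhin and explaining why Efimov's finiteness transfers from the K3 case to $T(X)(1)$, the proof has a genuine gap. Relatedly, your Hermite--Minkowski/Nikulin paragraph does no work: every cubic fourfold has $H^4(Y,\Z)$ abstractly isometric to $\fullcublat$, so finiteness of abstract isometry classes is vacuous; the entire content lies in the Hodge-isometry refinement you deferred.

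A second, smaller gap is the passage from a Hodge isometry of the full lattice $H^4(Y_1,\Z)\simeq H^4(Y_2,\Z)$ to one of the primitive cohomologies, which is what Theorem \ref{thm: torelli for cubic fourfolds} actually requires: such an isometry need not send $h_{Y_1}^2$ to $h_{Y_2}^2$. The paper disposes of this in Lemma \ref{lemma:Finitely many 4fold with isometric Hodge} by noting that $A(X)$ is positive definite and hence contains only finitely many vectors of square $3$, and that for each possible image of $h_Y^2$ the Torelli theorem determines $Y$ up to isomorphism. You should supply this step (or an equivalent one) before invoking Torelli.
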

\begin{proof}
Passing to Hodge structures of weight $2$ we set $H_X := H^4(X,\bbZ)(1)$ and $H_Y:=H^4(Y,\bbZ)(1)$. We have induced isomorphisms of algebraic and
transcendental lattices (see Definition~\ref{defn:transcendental lattice
of cubic 4fold}):
$$A(X) = \NS(H_X) \cong \NS(H_Y) = A(Y), \qquad
T(X)(1) = T(H_X) \cong T(H_Y) = T(Y)(1).$$
In particular, by Lemma \ref{lem: L equivalent cubic fourfolds have equal discriminants} we have also an equality $D:=\disc(T(X)) = \disc(T(Y))$.

Denote by $S_D$ the set of all symmetric pairings $T(H_X)\otimes T(H_X) \to \bbZ(-2)$ which are morphisms in $\operatorname{HS}_{\bbZ,4}$ and have discriminant $D$. The isomorphism $f(1)$ pulls back the intersection form on $H^4(Y,\bbZ)$, and restricting to the transcendental part gives an element of $S_D$. 

Consider the endomorphism ring $R := \End_{\operatorname{HS}_{\bbZ,2}}(T(H_X))$, and 
the group $G := \Aut_{\operatorname{HS}_{\bbZ,2}}(T(H_X))$ with its natural action on $S_D$. Up to Hodge automorphisms of $T(H_X)$, the possible transcendental intersection forms
coming from such $Y$ are parametrized by the $G$-orbits in $S_D$.

Now we need to show that the number of $G$-orbits in $S_D$ are finite. We follow the same proof as in \cite[Proposition~3.3]{Efi18}. The only change is that we use $T(H_X)$ instead of the transcendental lattice of a K3 surface. 
For carrying on the proof we need to check that $T(H_X)$ satisfies the hypotheses of Zarhin’s theorem \cite{Zarhin1983} (namely that $T(H_X)$ is a polarized weight $2$ Hodge structure of K3 type). As a consequence, the $\bbQ$-algebra $R \otimes \bbQ$ is a totally real number field, or an imaginary quadratic extension of a totally real number field. Denote by $\beta$ an element of $S_D$ and consider the associated Hodge endomorphism $a(\beta) \in F$. Efimov's argument then applies verbatim and gives the finiteness of $S_D/G$.

Finally, for each of these finitely many orbits we can apply Lemma \ref{lemma:Finitely many 4fold with isometric Hodge} and get finitely many isomorphism classes of cubic fourfold $Y$, concluding the proof.
\end{proof}

\begin{lemma} \label{lemma:Finitely many 4fold with isometric Hodge}
    Let $X$ be a cubic fourfold. Then there are exist finitely many isomorphism classes of cubic fourfolds $Y$ such that there is a Hodge isometry between $H^4(X,\bbZ)$ and $H^4(Y,\bbZ)$.
\end{lemma}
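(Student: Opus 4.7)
The plan is to reduce the claim to a finite orbit problem via the Torelli theorem for cubic fourfolds (Theorem \ref{thm: torelli for cubic fourfolds}). The essential point is that a Hodge isometry $H^4(X,\Z) \simeq H^4(Y,\Z)$ need not send the hyperplane square $H_X^2$ to $H_Y^2$, but it does exhibit a vector of square $3$ in the algebraic lattice $A(X)$, which is positive-definite and hence contains only finitely many such vectors.

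First I would set $H_X \coloneqq H^4(X,\Z)$ and, for each cubic fourfold $Y$ whose full cohomology is Hodge-isometric to $H_X$, fix a Hodge isometry $f\colon H_X \simeq H^4(Y,\Z)$. The pullback $f^{-1}(H_Y^2)$ is integral and of Hodge type $(2,2)$, so it lies in $A(X) = H^{2,2}(X)\cap H^4(X,\Z)$ and has self-intersection $3$. Since $A(X)$ is positive-definite (the two negative directions of the signature $(21,2)$ lattice $H^4(X,\Z)$ already lie in the transcendental part $T(X)\otimes \R$), the set
$$V \coloneqq \{v \in A(X) : v^2 = 3\}$$
is finite. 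This gives a well-defined map $\Phi$ from the set of isomorphism classes of such $Y$ to $V/O_\mathrm{Hodge}(H_X)$, sending $[Y]$ to the orbit of $f^{-1}(H_Y^2)$, which is independent of the choice of $f$ because any two such isometries differ by an element of $O_\mathrm{Hodge}(H_X)$.

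The central step is injectivity of $\Phi$. If $Y_1$ and $Y_2$ define the same orbit, there exist Hodge isometries $f_i\colon H_X \simeq H^4(Y_i,\Z)$ and a Hodge automorphism $g \in O_\mathrm{Hodge}(H_X)$ with $g(f_1^{-1}(H_{Y_1}^2)) = f_2^{-1}(H_{Y_2}^2)$. Then the composite $f_2 \circ g \circ f_1^{-1}\colon H^4(Y_1,\Z)\simeq H^4(Y_2,\Z)$ is a Hodge isometry sending $H_{Y_1}^2$ to $H_{Y_2}^2$; restricting to the orthogonal complement of the hyperplane square yields a Hodge isometry $H^4(Y_1,\Z)_\mathrm{prim} \simeq H^4(Y_2,\Z)_\mathrm{prim}$, and Theorem \ref{thm: torelli for cubic fourfolds} forces $Y_1 \simeq Y_2$. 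Since $V$ is finite, so is $V/O_\mathrm{Hodge}(H_X)$, and hence so is the set of isomorphism classes under consideration.

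The only genuine obstacle is checking that $A(X)$ is positive-definite, so that $V$ is finite; after that, the argument is a straightforward matching of hyperplane classes up to Hodge automorphisms of $H_X$, combined with Torelli.
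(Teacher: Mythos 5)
Your proof is correct and follows essentially the same route as the paper: pull back the hyperplane square to a square-$3$ vector in the positive-definite lattice $A(X)$, use finiteness of such vectors, and apply the Torelli Theorem to identify any two cubic fourfolds whose hyperplane squares match (up to a Hodge automorphism of $H^4(X,\Z)$, a minor refinement the paper does not need but which changes nothing). No gaps.
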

\begin{proof}
    Assume first we have a Hodge isometry $\phi\colon H^4(X,\bbZ) \to H^4(Y,\bbZ)$, and consider the class $v \coloneqq \phi^{-1}(h_Y^2)$. Since $\phi$ is a Hodge morphism and $h_Y^2 \in A(Y)$, we have $v \in A(X)$, and since $\phi$ is an isometry $(v,v) = (h_Y^2,h_Y^2) = 3$. The lattice $A(X)$ is positive definite. In a positive definite integral lattice, there are only finitely many vectors of a given square, see \cite[Section 2.1]{Hanke2013} for a reference. Hence the set $S_3 := \{ v\in A(X) : (v,v)=3\}$ is finite. Fix $v \in S_3$ and assume $Y_1$ and $Y_2$ are cubic fourfolds with Hodge isometries $\phi_i\colon H^4(X,\bbZ) \to H^4(Y_i,\bbZ)$ such that $\phi_i^{-1}(h_{Y_i}^2)=v$. The composition $\phi_2 \circ \phi_1^{-1}$ is a Hodge isometry between $H^4(Y_1,\bbZ)$ and $H^4(Y_2,\bbZ)$. Moreover, this composition sends $h_1^2$ to $h_2^2$ and so gives an identification  $H^4(Y_1,\bbZ)_{\prim} \cong H^4(Y_2,\bbZ)_{\prim}$. By Theorem \ref{thm: torelli for cubic fourfolds} we have $Y_1 \cong Y_2$.

Since $S_3$ is finite, there are only finitely many isomorphism
classes of cubic fourfolds $Y$ admitting a Hodge isometry
$H^4(X,\bbZ)\cong H^4(Y,\bbZ)$.
\end{proof}

The following theorem is the analogue for cubic fourfolds of \cite[Theorem 3.4]{Efi18}. Again, for cubic fourfolds which satisfy Derived Torelli, Theorem \ref{thm: L implies FM} implies that $\LL$-equivalent cubic fourfolds are Fourier-Mukai partners, and the results again follows from \cite[Theorem 1.1]{Huy16}.

\begin{theorem} \label{thm: L equivalences classes are finite}
    Let $X$ be a cubic fourfold. Then there exist finitely many cubic fourfolds $Y$ that are $\LL$-equivalent to $X$.
\end{theorem}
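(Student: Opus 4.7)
The plan is to adapt Efimov's argument \cite[Theorem 3.4]{Efi18} for $\LL$-equivalent K3 surfaces to the cubic fourfold setting, using Proposition \ref{prop:Finitely many 4fold with iso Hodge} as the geometric input. The overall strategy decomposes into three steps: reduce from $\LL$-equivalence to an equality of classes in $\gghs$; show that such an equality is consistent with only finitely many Hodge-structure classes; and invoke the Torelli-type finiteness already established.

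First, I would apply the Hodge realization map $\operatorname{Hdg}_\Z\colon \gravc \to \gghsf$. As noted in Section \ref{sec: Hodge realisation}, this map factors through $\gravc[\LL^{-1}]$, so the hypothesis $X \sim_\LL Y$ yields an equality $[H^\bullet(X,\Z)] = [H^\bullet(Y,\Z)]$ in $\gghsf$. Restricting to the weight-$4$ component and Tate twisting produces
\[
[H^4(X,\Z)(1)] = [H^4(Y,\Z)(1)] \in \gghs.
\]

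Second, I would show that this identity restricts $H^4(Y,\Z)(1)$ to finitely many isomorphism classes of polarizable integral weight-$2$ Hodge structures as $Y$ varies. The rational Hodge structure $H^4(Y,\Q)(1)$ is already determined up to isomorphism by the class in $\gghs$, because polarizable rational Hodge structures form a semisimple abelian category. To control the integral refinement, I follow the method used in the proof of Proposition \ref{prop:Finitely many 4fold with iso Hodge}: rank and signature are fixed topological invariants, and $\disc(T(Y)) = \disc(T(X))$ by Lemmas \ref{lem: gluing group homomorphism} and \ref{lem: L equivalent cubic fourfolds have equal discriminants}, so only finitely many lattice structures on the underlying $\Z$-module are available by classical genus finiteness; Zarhin's theorem then further constrains the Hodge refinements. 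For each of these finitely many Hodge structures, Proposition \ref{prop:Finitely many 4fold with iso Hodge} provides only finitely many cubic fourfolds realizing it. Combining the three steps yields the claim.

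The main obstacle is the second step. Efimov's Lemma 3.7, which pins down the Hodge structure uniquely from its Grothendieck class in the K3 case, relies on $\End_{\operatorname{Hodge}}(T(X)) \simeq \Z$, and this hypothesis is unavailable for an arbitrary cubic fourfold. The correct substitute must be a finiteness statement rather than a uniqueness one, and the argument used in the proof of Proposition \ref{prop:Finitely many 4fold with iso Hodge} — passing through Zarhin's classification of endomorphism algebras of K3-type Hodge structures, then bounding $G$-orbits of integral pairings of fixed discriminant — is precisely what must be recombined with the identity in $\gghs$ to conclude.
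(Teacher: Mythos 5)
Your overall architecture coincides with the paper's: apply the integral Hodge realisation to turn $X\sim_\LL Y$ into an equality $[H^4(X,\Z)]=[H^4(Y,\Z)]$ in $K_0^\oplus(\operatorname{HS}_{\Z,4})$, deduce that only finitely many integral Hodge structures can occur as $H^4(Y,\Z)$, and then conclude with Proposition \ref{prop:Finitely many 4fold with iso Hodge}. Your first and third steps are exactly the paper's. The gap is in your second step. What has to be shown there is that a fixed class in $K_0^\oplus(\operatorname{HS}_{\Z,4})$ contains only finitely many isomorphism classes of objects; since $K_0^\oplus$ identifies objects that are only stably isomorphic ($H_1\oplus H_3\simeq H_2\oplus H_3$), this is a nontrivial finiteness statement about integral Hodge structures considered \emph{without any bilinear form}. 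The paper gets it from \cite[Theorem 2.7]{Efi18}, a Jordan--Zassenhaus-type theorem for additive categories with finitely generated Hom-groups. Your substitute --- semisimplicity to pin down the rational Hodge structure, then ``classical genus finiteness'' together with Zarhin's theorem to control the integral refinement --- does not do this job. Genus finiteness is a statement about quadratic lattices, and at this stage $H^4(Y,\Z)$ carries no pairing: the intersection form, the discriminant of $T(Y)$, and Zarhin's theorem all enter only later, inside the proof of Proposition \ref{prop:Finitely many 4fold with iso Hodge}, where one classifies the possible pairings on a \emph{fixed} Hodge structure. Knowing that the rational Hodge structures agree and that rank, signature and $\disc(T(Y))$ are fixed does not bound the number of $\Aut_{\operatorname{HS}_\Q}$-orbits of full-rank lattices inside $H^4(X,\Q)$, which is what ``finitely many integral refinements'' actually means; that requires Jordan--Zassenhaus for lattices over the order $\End_{\operatorname{HS}_\Z}(H^4(X,\Z))$.

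The repair is minimal: replace your second step by an appeal to \cite[Theorem 2.7]{Efi18} applied to the additive category of weight-$4$ polarizable integral Hodge structures. With that substitution your argument becomes identical to the paper's proof. As a smaller point, your separate appeal to semisimplicity of rational Hodge structures is then superfluous, since the integral finiteness statement already subsumes it.
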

\begin{proof}
Let $Y$ be a cubic fourfold such that $X \sim_\LL Y$, and consider the integral Hodge realisation $\operatorname{Hdg}_\bbZ$ constructed in \cite[Section 3]{Efi18}. From $[X]=[Y]$ in $\gravc[\LL^{-1}]$ we deduce $[H^4(X,\bbZ)] = [H^4(Y,\bbZ)]$ in $K_0(\operatorname{HS}^4_\bbZ)$.

By \cite[Theorem 2.7]{Efi18}, applied to the additive category $\operatorname{HS}^4_\bbZ$, the class $[H^4(X,\bbZ)]$ in $K_0(\operatorname{HS}^4_\bbZ)$ contains only finitely many isomorphism classes of objects. In particular there are only finitely many possibilities for $H^4(Y,\bbZ)$ as an integral Hodge structure when $Y$ ranges over cubic fourfolds with $X \sim_\LL Y$. The thesis then follows from Proposition \ref{prop:Finitely many 4fold with iso Hodge}.
\end{proof}

The following result appears as Theorem 1.4 in \cite{BB2025}.
\begin{theorem} \label{thm: L implies FM}
    Let $X$ and $Y$ be cubic fourfolds which satisfy Derived Torelli (for example, if $X$ is a cubic fourfold satisfying one of the properties of Proposition \ref{prop: derived torelli criteria}) Moreover, assume $\rk(T(X)) \neq 4$ and $\operatorname{End}_{\mathrm{Hodge}}(T(X))\simeq \Z$. Then, if $X$ and $Y$ are $\LL$-equivalent, they are Fourier--Mukai partners.
\end{theorem}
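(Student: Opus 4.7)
The plan is to transport the $\LL$-equivalence into a Hodge-theoretic equality, extract the transcendental parts via an additivity argument, apply Theorem \ref{thm: L equivalence implies Hodge isometry} to upgrade to a Hodge isometry, and then invoke Assumption \ref{as: Derived Torelli for cubic fourfolds}.

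First I would apply the Hodge realisation $\operatorname{Hdg}_\Z$, which factors through $\gravc[\LL^{-1}]$, to the relation $[X]=[Y]\in \gravc[\LL^{-1}]$ coming from the $\LL$-equivalence, obtaining $[H^{\bullet}(X,\Z)]=[H^{\bullet}(Y,\Z)]$ in $\gghsf$. Projecting to the weight-$4$ component, and using that the cohomology groups of a cubic fourfold in degrees $\neq 4$ are pure Tate Hodge structures ($\Z$, $\Z(-1)$, $\Z(-3)$, $\Z(-4)$) which cancel between $X$ and $Y$, I obtain $[H^{4}(X,\Z)]=[H^{4}(Y,\Z)]$ in $\gghsfour$, and therefore, after a Tate twist, $[H^{4}(X,\Z)(1)]=[H^{4}(Y,\Z)(1)]$ in $\gghs$.

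The second step is to extract the transcendental parts. I would verify that the assignment $H\mapsto T(H)$ is additive on $\operatorname{HS}_{\Z,2}$: for $A,B\in\operatorname{HS}_{\Z,2}$, the sub-Hodge structure $T(A)\oplus T(B)$ is primitive in $A\oplus B$ and contains $(A\oplus B)^{(2,0)}=A^{(2,0)}\oplus B^{(2,0)}$, and the minimality of $T$ forces the reverse inclusion $T(A\oplus B)\subseteq T(A)\oplus T(B)$. Since an isomorphism of weight-$2$ Hodge structures matches $(2,0)$-parts and hence restricts to an isomorphism of transcendental sub-Hodge structures, $T$ also sends Hodge isomorphisms to Hodge isomorphisms. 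These two properties together suffice to define a group homomorphism $T_*\colon\gghs\to\gghs$ by $T_*([H])=[T(H)]$. Applying $T_*$ to the previous identity and using $T(H^{4}(Z,\Z)(1))=T(Z)(1)$ for any cubic fourfold $Z$, I conclude $[T(X)(1)]=[T(Y)(1)]$ in $\gghs$.

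With this equality in hand, the hypotheses of Theorem \ref{thm: L equivalence implies Hodge isometry} are all fulfilled: we have $\rk(T(X))\neq 4$ and $\operatorname{End}_{\mathrm{Hodge}}(T(X))\simeq \Z$ by assumption, and $X\sim_\LL Y$ additionally supplies the discriminant equality $\disc(T(X))=\disc(T(Y))$ via Lemma \ref{lem: gluing group homomorphism} and Lemma \ref{lem: L equivalent cubic fourfolds have equal discriminants}. Theorem \ref{thm: L equivalence implies Hodge isometry} then produces a Hodge isometry $T(X)\simeq T(Y)$, which Assumption \ref{as: Derived Torelli for cubic fourfolds} promotes to an equivalence of Kuznetsov components $\mathcal{A}_X\simeq\mathcal{A}_Y$, i.e. $X$ and $Y$ are Fourier--Mukai partners.

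The main technical point is the additivity of $T_*$ in the second step. The transcendental assignment is \emph{not} functorial with respect to arbitrary Hodge morphisms (the image of a sub-Hodge structure containing $A^{(2,0)}$ need not contain $B^{(2,0)}$), but additivity on direct sums combined with preservation of isomorphisms is exactly what is required to descend to a well-defined homomorphism on $\gghs$. Once Theorem \ref{thm: L equivalence implies Hodge isometry} is granted, the remainder of the argument is essentially bookkeeping.
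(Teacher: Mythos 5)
Your proposal is correct and follows essentially the same route as the paper: the paper's entire proof consists of invoking Theorem \ref{thm: L equivalence implies Hodge isometry} to obtain a Hodge isometry $T(X)\simeq T(Y)$ and then applying Assumption \ref{as: Derived Torelli for cubic fourfolds}, exactly as you do in your final step. The extra work you do — passing through the Hodge realisation and extracting $[T(X)(1)]=[T(Y)(1)]$ in $\gghs$ via additivity of $H\mapsto T(H)$ — is detail the paper leaves implicit when it feeds the $\LL$-equivalence into Theorem \ref{thm: L equivalence implies Hodge isometry} (whose stated hypothesis is $[T(X)]=[T(Y)]$), so it is a welcome clarification rather than a different method. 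One small point: your additivity argument only establishes $T(A\oplus B)\subseteq T(A)\oplus T(B)$; for the reverse inclusion observe that $T(A\oplus B)\cap A$ is a primitive sub-Hodge structure of $A$ whose complexification contains $A^{2,0}$, hence contains $T(A)$, and similarly for $B$.
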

\begin{proof}
    By Theorem \ref{thm: L equivalence implies Hodge isometry}, there is a Hodge isometry $T(X)\cong T(Y)$. Then, if $X$ and $Y$ satisfy Derived Torelli, then they are Fourier--Mukai partners.
\end{proof}

The following result appears as Theorem 1.2 in \cite{BB2025}.

\begin{proposition} \label{prop: very general cubic fourfolds have no L partners}
    Let $X$ be a cubic fourfold not contained in any of the Hassett divisors. Assume moreover that $\operatorname{End}_{\mathrm{Hodge}}(T(X))\simeq \Z$. Then, for any cubic fourfold $Y$, we have 
    \[
    X\sim_\LL Y \iff X\simeq Y
    \]
\end{proposition}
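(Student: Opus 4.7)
The reverse direction $X \simeq Y \Rightarrow X \sim_\LL Y$ is immediate, so the content is in the forward direction. The plan is to apply Theorem \ref{thm: L equivalence implies Hodge isometry} to get a Hodge isometry of transcendental lattices, then to observe that the very general hypothesis forces $T = H^4_\mathrm{prim}$ on both sides, and finally to conclude with the Torelli Theorem for cubic fourfolds.

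The assumption that $X$ lies outside every Hassett divisor $\mathcal{C}_d$ forces the algebraic lattice $A(X)$ to have rank one, i.e.\ $A(X) = \Z h_X^2$; hence $A(X)_\mathrm{prim} = 0$ and $T(X) = H^4(X,\Z)_\mathrm{prim}$, a lattice of rank $22$. In particular $\rk T(X) \neq 4$, and together with the standing hypothesis $\End_\mathrm{Hodge}(T(X)) \simeq \Z$ the assumptions of Theorem \ref{thm: L equivalence implies Hodge isometry} are satisfied. Applying it (exactly as in the proof of Theorem \ref{thm: L implies FM}) I obtain a Hodge isometry $T(X) \simeq T(Y)$. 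Since $T(Y)$ then has rank $22$, the algebraic lattice of $Y$ also has rank one; hence $A(Y) = \Z h_Y^2$ and $T(Y) = H^4(Y,\Z)_\mathrm{prim}$. The Hodge isometry $T(X) \simeq T(Y)$ is therefore a Hodge isometry of primitive cohomology lattices $H^4(X,\Z)_\mathrm{prim} \simeq H^4(Y,\Z)_\mathrm{prim}$, and the Torelli Theorem (Theorem \ref{thm: torelli for cubic fourfolds}) concludes $X \simeq Y$.

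The main technical input is Theorem \ref{thm: L equivalence implies Hodge isometry}, which packages the discriminant equality $\disc(T(X)) = \disc(T(Y))$ for $\LL$-equivalent cubic fourfolds (from Lemmas \ref{lem: gluing group homomorphism} and \ref{lem: L equivalent cubic fourfolds have equal discriminants}) together with the Efimov--Meinsma machinery that upgrades an identity in the additive Grothendieck group of Hodge structures to a genuine Hodge isometry under the hypothesis $\End_\mathrm{Hodge}(T(X)) \simeq \Z$. Granted that input, the remaining work is the purely lattice-theoretic identification $T = H^4_\mathrm{prim}$ on both sides: this is automatic for $X$ by the Hassett-free assumption, and is forced on the $Y$ side by the rank count coming from the Hodge isometry, after which one application of Voisin's Torelli theorem finishes the argument.
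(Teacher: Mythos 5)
Your proof is correct and follows essentially the same route as the paper: apply Theorem \ref{thm: L equivalence implies Hodge isometry} (using $\rk T(X)=22\neq 4$) to get a Hodge isometry $T(X)\simeq T(Y)$, identify both transcendental lattices with the primitive cohomology, and conclude by Torelli. If anything, you are slightly more careful than the paper, which asserts that $Y$ also lies outside all Hassett divisors without comment, whereas you correctly deduce $\rk A(Y)=1$ from the rank of $T(Y)$ forced by the Hodge isometry.
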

\begin{proof}
    Note that $\rk(T(X)) = 22 \neq 4$. Therefore, we may apply Theorem \ref{thm: L equivalence implies Hodge isometry} to obtain a Hodge isometry $T(X)\simeq T(Y)$. 
    However, since $X$ and $Y$ are not contained in any Hassett divisor, we have \[H^4(X,\Z)_{\operatorname{prim}} = T(X) \simeq T(Y) = H^4(Y,\Z)_{\operatorname{prim}},\] hence $X\simeq Y$ by the Torelli Theorem \ref{thm: torelli for cubic fourfolds}.
\end{proof}

The following result appears as Proposition 4.4 in \cite{BB2025}.
\begin{proposition} \label{prop: LE implies LF}
    Let $X$ and $Y$ be cubic fourfolds. Suppose $X$ and $Y$ are $\LL$-equivalent. Then their Fano varieties of lines are also $\LL$-equivalent.
\end{proposition}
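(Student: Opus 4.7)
The plan is to combine the Galkin-Shinder formula, which expresses the class of the Hilbert scheme $X^{[2]}$ in terms of the classes of $X$ and $F(X)$, with the fact that the symmetric-product construction is compatible with $\LL$-equivalence via the power structure on the Grothendieck ring of varieties.

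I would apply the Galkin-Shinder identity \cite[Theorem 5.1]{GS14}
\[
[Z^{[2]}] = [\PP^4]\cdot[Z] + \LL^2\cdot[F(Z)] \quad \text{in } \gravc,
\]
valid for any smooth cubic fourfold $Z$, to both $X$ and $Y$, and subtract:
\[
\LL^2\cdot\big([F(X)]-[F(Y)]\big) = \big([X^{[2]}] - [Y^{[2]}]\big) - [\PP^4]\cdot\big([X]-[Y]\big).
\]
Because $X\sim_\LL Y$, the second term on the right is annihilated by a power of $\LL$, so the proposition reduces to showing $[X^{[2]}] \sim_\LL [Y^{[2]}]$.

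To show this, I would stratify $Z^{[2]}$ as the blowup of the symmetric product $\operatorname{Sym}^2 Z$ along the diagonal, whose exceptional divisor is a Zariski-locally trivial $\PP^3$-bundle over $Z$; this gives the motivic identity
\[
[Z^{[2]}] = [\operatorname{Sym}^2 Z] + (\LL+\LL^2+\LL^3)\cdot[Z].
\]
The second summand is manifestly $\LL$-equivalent between $X$ and $Y$. For the first, I would appeal to the power structure on $\gravc$ introduced by Gusein-Zade, Luengo and Melle-Hern\'andez, together with the Kapranov-type identity
\[
\sum_{n\ge 0}[\operatorname{Sym}^n Z]\,t^n = (1-t)^{-[Z]},
\]
which shows that each class $[\operatorname{Sym}^n Z]$ is determined by $[Z]\in\gravc$ alone. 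Since this power structure is compatible with the localization $\gravc[\LL^{-1}]$, the hypothesis $X\sim_\LL Y$ (equivalently, $[X]=[Y]$ in $\gravc[\LL^{-1}]$) forces $[\operatorname{Sym}^n X] = [\operatorname{Sym}^n Y]$ in $\gravc[\LL^{-1}]$ for every $n$. Combining the steps gives $[F(X)]\sim_\LL [F(Y)]$.

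The principal obstacle is verifying that the Gusein-Zade–Luengo–Melle-Hern\'andez power structure indeed descends to $\gravc[\LL^{-1}]$, so that the Kapranov identity continues to determine each symmetric product class from $[Z]$ after inverting $\LL$; this is expected to follow from the functorial nature of the construction, but deserves careful confirmation, and a naive attempt via the relation $2[\operatorname{Sym}^2 Z] = [Z]^2 + [Z]$ is in fact false in $\gravc$ (it fails already for $Z = \PP^1$), so one must genuinely rely on the power-structure formalism rather than on an elementary scissor-type argument. A secondary technical point is that the Galkin-Shinder identity was originally established under the assumption that the cubic does not contain a plane, and so one should invoke its extension to arbitrary smooth cubic fourfolds found in subsequent literature.
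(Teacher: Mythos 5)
Your argument is essentially the paper's: the proof in the text applies the same Galkin--Shinder identity and reduces to $X^{[2]}\sim_\LL Y^{[2]}$, which it obtains by citing \cite[Lemma 2.1]{Oka21}; what you have done is reprove that lemma by hand via the decomposition $[Z^{[2]}]=[\operatorname{Sym}^2Z]+(\LL+\LL^2+\LL^3)[Z]$ and the power structure. The one point you flag as an obstacle is real but closes in a standard way, and is exactly the content of the cited lemma: the $\sigma$-operations $[V]\mapsto[\operatorname{Sym}^nV]$ are well defined on $\gravc$ (not merely on effective classes), and Totaro's compatibility $[\operatorname{Sym}^n(V\times\A^m)]=\LL^{nm}[\operatorname{Sym}^nV]$ (equivalently, $Z_{V\times\A^m}(t)=Z_V(\LL^m t)$ for the Kapranov zeta function, one of the basic properties of the Gusein-Zade--Luengo--Melle-Hern\'andez power structure) shows that $\LL^k([X]-[Y])=0$, i.e.\ $[X\times\A^k]=[Y\times\A^k]$, forces $\LL^{2k}([\operatorname{Sym}^2X]-[\operatorname{Sym}^2Y])=0$; no descent of the full power structure to $\gravc[\LL^{-1}]$ is needed. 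Your closing caveat about planes is unnecessary: \cite[Theorem 5.1]{GS14} holds for arbitrary smooth cubic hypersurfaces, with the locus of length-two subschemes spanning a line contained in the cubic accounted for by the $\LL^2[F(Z)]$ term.
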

\begin{proof}
    Since $X$ and $Y$ are assumed to be $\LL$-equivalent, $X^{[2]}$ and $Y^{[2]}$ are $\LL$-equivalent by \cite[Lemma 2.1]{Oka21}.
    Now the result follows directly from the \textit{beautiful formula} of Galkin--Shinder \cite{GS14}: for any cubic fourfold $X$, we have
    \[\LL^2[F(X)] = [X^{[2]}] - [\PP^4][X].\]
\end{proof}

\begin{theorem} \label{thm: LF implies FM and DF}
    Let $X$ and $Y$ be cubic fourfolds. Assume that $X$ satisfies Derived Torelli, that $\rk(T(X))\neq 4$, and that $\End_{\mathrm{Hodge}}(T(X)) \simeq \Z$. Then if $F(X)\sim_\LL F(Y)$, there is an equivalence $\mathcal{A}_X\simeq \mathcal{A}_Y$, hence also an equivalence $\Db(F(X))\simeq \Db(F(Y))$. 
\end{theorem}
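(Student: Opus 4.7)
The plan is to run the same Hodge-theoretic argument as in Theorem \ref{thm: L implies FM}, but applied to the Fano varieties $F(X)$ and $F(Y)$ in place of $X$ and $Y$, and then transport the resulting Hodge isometry on $T(F(X))$ back to one on $T(X)$ via Lemma \ref{lem: coarseness of a fano variety}. The equivalence $\Db(F(X))\simeq \Db(F(Y))$ will then drop out from $\mathcal{A}_X\simeq \mathcal{A}_Y$ by the \textbf{FM} $\Rightarrow$ \textbf{DF} implication of \cite{KS25}.

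First, from $F(X)\sim_\LL F(Y)$ I would apply the integral Hodge realisation map (which factors through $\gravc[\LL^{-1}]$) and the projection $\gghsf \to \gghs$ to obtain the equality $[H^2(F(X),\Z)] = [H^2(F(Y),\Z)]$ in $\gghs$. Since $H^2$ of a hyperk\"ahler manifold carries a Hodge structure of K3 type, one then extracts the equality of transcendental classes $[T(F(X))] = [T(F(Y))]$ in $\gghs$; this is the same extraction step that is implicit in the proof of Theorem \ref{thm: L equivalence implies Hodge isometry} for cubic fourfolds, and it is driven by the endomorphism-ring hypothesis together with \cite[Lemma 3.7]{Efi18}.

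Next, I would upgrade this class equality to a Hodge isometry. By Lemma \ref{lem: coarseness of a fano variety}, $T(F(X))$ is Hodge anti-isometric to $T(X)$, so in particular $\rk T(F(X)) = \rk T(X) \neq 4$ and $\End_{\mathrm{Hodge}}(T(F(X))) \simeq \End_{\mathrm{Hodge}}(T(X)) \simeq \Z$; the same holds for $F(Y)$ via the Hodge isomorphism already produced. Applying \cite[Lemma 3.7]{Efi18} yields an isomorphism of Hodge structures $T(F(X))\simeq T(F(Y))$. To upgrade it to a Hodge isometry I would use \cite[Corollary 3.10]{Mei24}, whose hypothesis is the discriminant equality $\disc(T(F(X))) = \disc(T(F(Y)))$; this equality follows by combining Lemma \ref{lem: gluing group homomorphism} (applied to the class equality in $\gghs$) with Lemma \ref{lem: coarseness of a fano variety} (which identifies $|G(F(-))|$ with $\disc(T(F(-)))$). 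Twisting by $(-1)$ and invoking Lemma \ref{lem: coarseness of a fano variety} once more converts this into a Hodge isometry $T(X)\simeq T(Y)$.

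Finally, Assumption \ref{as: Derived Torelli for cubic fourfolds} turns the Hodge isometry $T(X)\simeq T(Y)$ into an equivalence $\mathcal{A}_X\simeq \mathcal{A}_Y$, and \cite{KS25} then produces the derived equivalence $\Db(F(X))\simeq \Db(F(Y))$. The main technical hurdle I anticipate is the transcendental extraction step, i.e.\ passing from $[H^2(F(X),\Z)] = [H^2(F(Y),\Z)]$ to $[T(F(X))] = [T(F(Y))]$ inside $\gghs$; this is the exact analogue of the corresponding step for cubic fourfolds in Theorem \ref{thm: L implies FM}, and it is precisely here that the hypotheses $\rk T(X)\neq 4$ and $\End_{\mathrm{Hodge}}(T(X)) \simeq \Z$ intervene.
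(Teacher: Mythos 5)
Your proposal is correct and follows essentially the same route as the paper: both use Lemma \ref{lem: coarseness of a fano variety} together with Lemma \ref{lem: gluing group homomorphism} to get $\disc(T(F(X)))=\disc(T(F(Y)))$, upgrade the class equality of transcendental Hodge structures to a Hodge isometry via the Efimov/Meinsma argument under the hypotheses $\rk T(X)\neq 4$ and $\End_{\mathrm{Hodge}}(T(X))\simeq\Z$, and conclude with Assumption \ref{as: Derived Torelli for cubic fourfolds} and \cite{KS25}. The only cosmetic difference is that you run the upgrade on the Fano side and then transport via the anti-isometry, while the paper transports the class equality to $[T(X)(1)]=[T(Y)(1)]$ first and then invokes the packaged Theorem \ref{thm: L equivalence implies Hodge isometry}.
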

\begin{proof}
    By \cite[Theorem 3.5]{Mei25} and Lemma \ref{lem: coarseness of a fano variety}, the gluing group $$G= \frac{H^2(F(X),\Z)}{T(F(X))\oplus \NS(F(X))}$$ has order $|G| = \disc(T(F(X)))$. Therefore, if $F(X)$ is $\LL$-equivalent to $F(Y)$, it follows from Lemma \ref{lem: gluing group homomorphism} that $\disc(T(F(X))) = \disc(T(F(Y)))$. Since $T(F(X))$ is Hodge anti-isometric to $T(X)$, we find that $[T(X)(1)] = [T(Y)(1)]$. By Theorem \ref{thm: L equivalence implies Hodge isometry}, there is a Hodge isometry $T(X)\simeq T(Y)$, hence an equivalence $\mathcal{A}_X\simeq \mathcal{A}_Y$ by Definition \ref{def: derived torelli assumption}. The fact that there is an equivalence $\Db(F(X)) \simeq \Db(F(Y))$ follows from the main result of \cite{KS25}.
\end{proof}

The proof of the following result was communicated to us by Evgeny Shinder.
\begin{proposition} \label{prop: birational implies same motive}
Let $X$ and $Y$ be projective hyperk\"ahler manifolds. Suppose $X$ and $Y$ are birational. Then we have $[X] = [Y] \in \gravc$. 
\end{proposition}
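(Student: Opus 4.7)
The plan is to reduce the claim to the fact that a single Mukai flop preserves classes in the Grothendieck ring, and then invoke the structural result that birational projective hyperk\"ahler manifolds are related by a finite sequence of such flops.

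More precisely, I would first invoke the decomposition theorem for birational maps between projective hyperk\"ahler manifolds: any birational map $f\colon X \dashrightarrow Y$ factors as a composition of elementary Mukai flops $\phi_i \colon X_i \dashrightarrow X_{i+1}$ (with $X_0 = X$ and $X_k = Y$), each of which restricts to an isomorphism on the complement of a closed subvariety $Z_i \subset X_i$. The center $Z_i$ has the structure of a projective bundle $\PP(E_i) \to B_i$ over a smooth base $B_i$ for some vector bundle $E_i$ of rank $r_i$, and the corresponding indeterminacy locus $Z_i' \subset X_{i+1}$ is the dual projective bundle $\PP(E_i^{\vee}) \to B_i$. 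For hyperk\"ahler manifolds of $\Kn$-type this decomposition follows from Markman's analysis of the monodromy group combined with the wall-crossing results of Bayer--Macr\`i, and for the remaining known deformation types analogous statements are available.

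Given this decomposition, the proof becomes a cut-and-paste argument. For each $i$, both $Z_i$ and $Z_i'$ are Zariski-locally trivial $\PP^{r_i - 1}$-bundles over the common base $B_i$, so
\[
    [Z_i] = [\PP^{r_i - 1}] \cdot [B_i] = [Z_i']
\]
in $\gravc$. Combined with the isomorphism $X_i \setminus Z_i \cong X_{i+1} \setminus Z_i'$ coming from $\phi_i$, the scissor relation gives
\[
    [X_i] = [X_i \setminus Z_i] + [Z_i] = [X_{i+1} \setminus Z_i'] + [Z_i'] = [X_{i+1}],
\]
and iterating over $i$ yields $[X] = [Y]$ in $\gravc$.

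The main obstacle is the first step: applying a uniform structural decomposition of birational maps into Mukai flops with projective-bundle centers. For each of the known deformation types of hyperk\"ahler manifolds this is in the literature, but the case analysis requires care. An alternative route would be to combine Huybrechts' theorem that birational hyperk\"ahler manifolds are deformation equivalent through a twistor-type family with a specialization argument in the Grothendieck ring; however, such an approach typically yields equality only modulo $\LL$ or after localization, so the direct flop-by-flop argument seems to be the right tool for equality in $\gravc$ itself.
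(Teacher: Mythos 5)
Your second step (the cut-and-paste computation for a single Mukai flop with dual $\PP^{r_i-1}$-bundle centers) is fine as far as it goes, but the first step is a genuine gap, and it is where all the difficulty lies. It is not established that every birational map between projective hyperk\"ahler manifolds factors into \emph{elementary} Mukai flops whose centers are projective bundles, and the proposition is stated for arbitrary projective hyperk\"ahler manifolds, so you cannot restrict to known deformation types in any case. Even for $\Kn$-type, the wall-crossing analysis of Bayer--Macr\`i produces small contractions whose exceptional loci are stratified (for instance unions of Grassmannian bundles), and the induced birational transformations are stratified Mukai flops rather than compositions of elementary ones with $\PP^{r-1}$-bundle centers; for other deformation types even less is known. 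So the structural decomposition your argument rests on is not available in the literature in the form you need it.

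Moreover, the alternative route you dismiss at the end is exactly the proof the paper gives, and it yields equality in $\gravc$ itself, with no localisation at $\LL$. By Huybrechts' theorem there exist smooth projective families $\calX$, $\calY$ over a smooth one-dimensional base $S$, isomorphic over $S\setminus\{0\}$, with special fibres $X$ and $Y$. Passing to the completion $R$ of the local ring at $0$ with fraction field $K$, the generic fibres satisfy $[\calX_K]=[\calY_K]$ in $K_0(\operatorname{Var}_K)$, and the motivic volume morphism of Nicaise--Shinder sends the class of the generic fibre of a smooth proper $R$-model to the class of its special fibre on the nose, so $\operatorname{Vol}_K([\calX_K])=[X]$ and $\operatorname{Vol}_K([\calY_K])=[Y]$, whence $[X]=[Y]$. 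Your worry that specialisation arguments only give equality modulo $\LL$ or after localisation does not apply here. I would replace the flop-by-flop argument with this specialisation argument, or at minimum restrict the flop decomposition claim to settings where it is actually proved.
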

\begin{proof}
    By \cite[Theorem 4.6]{Huy1999} there exist two smooth projective families $\calX$ and $\calY$ over a smooth one dimensional base $S$ with special fibres $\calX_0 \cong X$ and $\calY_0 \cong Y$, such that $\calX$ and $\calY$ are isomorphic over $S \smallsetminus \{0\}$. 

    Now consider the generic fibres over $S \smallsetminus \{0\}$. As in \cite{NS19} let $R$ be the completion of the local ring of $S$ at $0$ and let $K = \operatorname{Frac}(R)$, so that $\operatorname{Spec} K$ is the generic point of $\operatorname{Spec}(R)$. By taking base changes we get the restrictions $\calX_R$ and $\calY_R$ of $\calX$ and $\calY$ to a formal neighbourhood of $0$, and the corresponding generic fibres $\calX_K$ and $\calY_K$. 
    
     Recall that we have a canonical identification of special fibres
$(\calX_R)_k=\calX_R\times_{\Spec R}\Spec k \cong \calX\times_S\Spec k \cong  \calX_0$,
and similarly $(\calY_R)_k\cong \calY_0$.
    
    The isomorphism of the families over $S \smallsetminus \{0\}$ induces an isomorphism over $K$ of $\calX_K$ and $\calY_K$, which induces $[\calX_K]=[\calY_K]$ in $K_0(\operatorname{Var}_K)$. 
    
    Now apply the motivic volume morphism
$\operatorname{Vol}_K: K_0(\operatorname{Var}_K)\to K_0^{\hat\mu}(\operatorname{Var}_\bbC)$ of
\cite[Theorem 3.1.1]{NS19}. From $[\calX_K]=[\calY_K]$ we get $\operatorname{Vol}_K([\calX_K])=\operatorname{Vol}_K([\calY_K])$.
From the fact that $\calX_R$ ($\calY_R$) is a smooth proper $R$ model of $\calX_K$ ($\calY_K$), so $\operatorname{Vol}_K([\calX_K])=[\calX_0]=[X]$ and $\operatorname{Vol}_K([\calY_K])=[\calY_0]=[Y]$. This concludes the proof.
\end{proof}

We obtain the following as an immediate consequence:
\begin{corollary} \label{cor: BF implies LF}
    Let $X$ and $Y$ be cubic fourfolds. If $F(X)$ is birational to $F(Y)$, then $F(X)$ and $F(Y)$ are $L$-equivalent. In other words, we have 
    $\mathbf{BF} \implies\mathbf{LF}$.
\end{corollary}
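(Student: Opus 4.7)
The statement is a direct consequence of Proposition~\ref{prop: birational implies same motive}, so the plan is essentially a one-line deduction plus a verification that the hypotheses apply. First I would recall that for any smooth cubic fourfold $X$, the Fano variety of lines $F(X)$ is a projective hyperk\"ahler manifold of $\mathrm{K3}^{[2]}$-type (this is a classical result of Beauville--Donagi). Hence $F(X)$ and $F(Y)$ fall within the scope of Proposition~\ref{prop: birational implies same motive}.

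Next, assuming $F(X)$ is birational to $F(Y)$, I would directly invoke Proposition~\ref{prop: birational implies same motive} to conclude
\[
[F(X)] \;=\; [F(Y)] \quad \text{in } \gravc.
\]
This equality in the (non-localised) Grothendieck ring implies in particular that
\[
\LL^0 \cdot \bigl([F(X)] - [F(Y)]\bigr) \;=\; 0 \;\in\; \gravc,
\]
which is exactly the definition of $\LL$-equivalence with $n = 0$. Therefore $F(X) \sim_\LL F(Y)$, which is the content of the corollary.

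There is essentially no obstacle here, since all of the technical work is packaged into Proposition~\ref{prop: birational implies same motive} (whose proof uses the smooth projective deformation of birational hyperk\"ahler manifolds due to Huybrechts, together with the motivic volume morphism of Nicaise--Shinder). The only subtlety worth flagging explicitly in the write-up is that Proposition~\ref{prop: birational implies same motive} yields the strong statement $[F(X)] = [F(Y)]$ in $\gravc$, not merely $\LL$-equivalence; in particular this shows that birational Fano varieties of lines of cubic fourfolds are always \emph{trivially} $\LL$-equivalent.
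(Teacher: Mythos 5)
Your proposal is correct and matches the paper exactly: the corollary is stated there as an immediate consequence of Proposition~\ref{prop: birational implies same motive}, using that $F(X)$ and $F(Y)$ are projective hyperk\"ahler manifolds (of $\mathrm{K3}^{[2]}$-type by Beauville--Donagi) and that equality of classes in $\gravc$ gives $\LL$-equivalence with $n=0$. Your remark that the $\LL$-equivalence obtained is in fact trivial is a correct and worthwhile observation.
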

\printbibliography

@article{SZ20,
author = {Shinder, E. and Zhang, Z.},
title = {L-equivalence for degree five elliptic curves, elliptic fibrations and K3 surfaces},
journal = {Bulletin of the London Mathematical Society},
volume = {52},
number = {2},
pages = {395-409},
keywords = {14F05 (primary), 14H52, 14J28, 14D06 (secondary)},
abstract = {Abstract We construct non-trivial L-equivalence between curves of genus one and degree five, and between elliptic surfaces of multisection index five. These results give the first examples of L-equivalence for curves (necessarily over non-algebraically closed fields) and provide a new bit of evidence for the conjectural relationship between L-equivalence and derived equivalence. The proof of the L-equivalence for curves is based on Kuznetsov's Homological Projective Duality for Gr(2,5), and L-equivalence is extended from genus one curves to elliptic surfaces using the Ogg–Shafarevich theory of twisting for elliptic surfaces. Finally, we apply our results to K3 surfaces and investigate when the two elliptic L-equivalent K3 surfaces we construct are isomorphic, using Neron–Severi lattices, moduli spaces of sheaves and derived equivalence. The most interesting case is that of elliptic K3 surfaces of polarization degree ten and multisection index five, where the resulting L-equivalence is new.},
year = {2020}
}

@inproceedings{HLOY02,
    author = "Hosono, S and Lian, B. H. and Oguiso, K. and Yau, S.",
    title = "{Fourier-Mukai number of a K3 surface}",
    booktitle = "{CRM Workshop on Algebraic Structures and Moduli Spaces}",
    eprint = "math/0202014",
    archivePrefix = "arXiv",
    month = "2",
    year = "2002"
}

@article {Ogu01,
    AUTHOR = {Oguiso, Keiji},
     TITLE = {K3 surfaces via almost-primes},
   JOURNAL = {Math. Res. Lett.},
  FJOURNAL = {Mathematical Research Letters},
    VOLUME = {9},
      YEAR = {2002},
    NUMBER = {1},
     PAGES = {47--63},
      ISSN = {1073-2780},
   MRCLASS = {14J28 (11N32 18E30)},
  MRNUMBER = {1892313},
MRREVIEWER = {Bal\'{a}zs Szendr\H{o}i},
}

@book{Huy16,
author = {D. Huybrechts},
title = {Lectures on K3 surfaces},
journal = {Cambridge Studies in Advanced Mathematics}, 
publisher = {Cambridge University Press},
city = {Cambridge},
year = {2016}
}

@book {Huy2023,
    AUTHOR = {Huybrechts, Daniel},
     TITLE = {The geometry of cubic hypersurfaces},
    SERIES = {Cambridge Studies in Advanced Mathematics},
    VOLUME = {206},
 PUBLISHER = {Cambridge University Press, Cambridge},
      YEAR = {2023},
     PAGES = {xvii+441},
      ISBN = {978-1-009-28000-6; [9781009280020]},
   MRCLASS = {14J70 (14C30 14J30 14J35)},
  MRNUMBER = {4589520},
MRREVIEWER = {Fei\ Hu},
}

@article{Nik80,
	year = 1980,
	month = {02},
	publisher = {{IOP} Publishing},
	volume = {14},
	number = {1},
	pages = {103--167},
	author = {V. V. Nikulin},
	title = {Integral symmetric bilinear forms and some of their applications},
	journal = {Mathematics of the {USSR}-Izvestiya}
}

@article{Orl03,
	year = 2003,
	month = {06},
	publisher = {{IOP} Publishing},
	volume = {58},
	number = {3},
	pages = {511--591},
	author = {D. O. Orlov},
	title = {Derived categories of coherent sheaves and equivalences between them},
	journal = {Russian Mathematical Surveys}
}

@article{Muk87,
  title={On the moduli space of bundles on K3 surfaces. I. Vector bundles on algebraic varieties (Bombay, 1984), 341--413, Tata Inst},
  author={Mukai, S.},
  journal={Fund. Res. Stud. Math},
  volume={11},
  year={1987}
}

@article{HS05,
issn = {0025-5831},
journal = {Mathematische Annalen},
pages = {901--936},
volume = {332},
publisher = {Springer Nature},
number = {4},
year = {2005},
title = {Equivalences of twisted K3 surfaces},
copyright = {Copyright 2008 Elsevier B.V., All rights reserved.},
address = {HEIDELBERG},
author = {Huybrechts, D. and Stellari, P.},
keywords = {Mathematics ; Physical Sciences ; Science & Technology},
}

@article{Ma10,
issn = {0002-9947},
journal = {Transactions of the American Mathematical Society},
pages = {537--552},
volume = {362},
publisher = {American Mathematical Society},
number = {1},
year = {2010},
title = {Twisted Fourier-Mukai number of a K3 surface},
copyright = {Copyright 2009, American Mathematical Society},
address = {PROVIDENCE},
author = {Ma, S.}
}

@article {KS18,
    AUTHOR = {Kuznetsov, Alexander and Shinder, Evgeny},
     TITLE = {Grothendieck ring of varieties, {D}- and {L}-equivalence, and
              families of quadrics},
   JOURNAL = {Selecta Math. (N.S.)},
  FJOURNAL = {Selecta Mathematica. New Series},
    VOLUME = {24},
      YEAR = {2018},
    NUMBER = {4},
     PAGES = {3475--3500},
      ISSN = {1022-1824},
   MRCLASS = {14F05 (11E88 14C35 14D06 14J28)},
  MRNUMBER = {3848025},
MRREVIEWER = {Andrea T. Ricolfi},
}

@article {AT2014,
    AUTHOR = {Addington, Nicolas and Thomas, Richard},
     TITLE = {Hodge theory and derived categories of cubic fourfolds},
   JOURNAL = {Duke Math. J.},
  FJOURNAL = {Duke Mathematical Journal},
    VOLUME = {163},
      YEAR = {2014},
    NUMBER = {10},
     PAGES = {1885--1927},
      ISSN = {0012-7094,1547-7398},
   MRCLASS = {14F05 (14J10 14J28 14J35)},
  MRNUMBER = {3229044},
MRREVIEWER = {Pawel\ Sosna},
       DOI = {10.1215/00127094-2738639},
       URL = {https://doi.org/10.1215/00127094-2738639},
}

@article {Bec22,
    AUTHOR = {Beckmann, T.},
     TITLE = {Derived categories of hyper-{K}\"{a}hler manifolds: extended
              {M}ukai vector and integral structure},
  JOURNAL = {Compositio Mathematica},
    VOLUME = {159},
      YEAR = {2023},
    NUMBER = {1},
     PAGES = {109--152},       
}

@article {OR2018,
    AUTHOR = {Ottem, John Christian and Rennemo, J\o rgen Vold},
     TITLE = {A counterexample to the birational {T}orelli problem for
              {C}alabi-{Y}au threefolds},
   JOURNAL = {J. Lond. Math. Soc. (2)},
  FJOURNAL = {Journal of the London Mathematical Society. Second Series},
    VOLUME = {97},
      YEAR = {2018},
    NUMBER = {3},
     PAGES = {427--440},
      ISSN = {0024-6107,1469-7750},
   MRCLASS = {14C34 (14F05 14J32)},
  MRNUMBER = {3816394},
MRREVIEWER = {Zhi\ Jiang},
       DOI = {10.1112/jlms.12111},
       URL = {https://doi.org/10.1112/jlms.12111},
}

@article {Martin2016,
    AUTHOR = {Martin, Nicolas},
     TITLE = {The class of the affine line is a zero divisor in the
              {G}rothendieck ring: an improvement},
   JOURNAL = {C. R. Math. Acad. Sci. Paris},
  FJOURNAL = {Comptes Rendus Math\'ematique. Acad\'emie des Sciences. Paris},
    VOLUME = {354},
      YEAR = {2016},
    NUMBER = {9},
     PAGES = {936--939},
      ISSN = {1631-073X,1778-3569},
   MRCLASS = {14C35},
  MRNUMBER = {3535349},
MRREVIEWER = {Yuichiro\ Takeda},
       DOI = {10.1016/j.crma.2016.05.016},
       URL = {https://doi.org/10.1016/j.crma.2016.05.016},
}

@article{Add16,
  title={On two rationality conjectures for cubic fourfolds},
  author={Addington, N.},
  journal={Mathematical Research Letters},
  volume={23},
  number={1},
  pages={1--13},
  year={2016},
  publisher={International Press of Boston}
}

@article{Huy17, 
title={The K3 category of a cubic fourfold}, 
volume={153}, 
number={3}, 
journal={Compositio Mathematica},
publisher={London Mathematical Society}, 
author={Huybrechts, Daniel}, 
year={2017}, 
pages={586–620}
}

@article{Huy25, 
title={The K3 category of a cubic fourfold: an update}, 
journal={Beiträge zur Algebra und Geometrie / Contributions to Algebra and Geometry},
author={Huybrechts, Daniel}, 
year={2025}
}

@incollection {Hanke2013,
    AUTHOR = {Hanke, Jonathan},
     TITLE = {Quadratic forms and automorphic forms},
 BOOKTITLE = {Quadratic and higher degree forms},
    SERIES = {Dev. Math.},
    VOLUME = {31},
     PAGES = {109--168},
 PUBLISHER = {Springer, New York},
      YEAR = {2013},
      ISBN = {978-1-4614-7487-6; 978-1-4614-7488-3},
   MRCLASS = {11E08 (11E12 11E41 11E45 11E88 11F12)},
  MRNUMBER = {3156556},
MRREVIEWER = {Carl\ Riehm},
       DOI = {10.1007/978-1-4614-7488-3\_5},
       URL = {https://doi.org/10.1007/978-1-4614-7488-3_5},
}

@article {Laza2010,
    AUTHOR = {Laza, Radu},
     TITLE = {The moduli space of cubic fourfolds via the period map},
   JOURNAL = {Ann. of Math. (2)},
  FJOURNAL = {Annals of Mathematics. Second Series},
    VOLUME = {172},
      YEAR = {2010},
    NUMBER = {1},
     PAGES = {673--711},
      ISSN = {0003-486X,1939-8980},
   MRCLASS = {14J10 (14C34 32G20)},
  MRNUMBER = {2680429},
MRREVIEWER = {Francesco\ Polizzi},
       DOI = {10.4007/annals.2010.172.673},
       URL = {https://doi.org/10.4007/annals.2010.172.673},
}

@article {Looijenga2009,
    AUTHOR = {Looijenga, Eduard},
     TITLE = {The period map for cubic fourfolds},
   JOURNAL = {Invent. Math.},
  FJOURNAL = {Inventiones Mathematicae},
    VOLUME = {177},
      YEAR = {2009},
    NUMBER = {1},
     PAGES = {213--233},
      ISSN = {0020-9910,1432-1297},
   MRCLASS = {32G20 (14J35 14J45 14L24 32N15)},
  MRNUMBER = {2507640},
MRREVIEWER = {I.\ Dolgachev},
       DOI = {10.1007/s00222-009-0178-6},
       URL = {https://doi.org/10.1007/s00222-009-0178-6},
}

@article{Efi18,
  title={Some remarks on L-equivalence of algebraic varieties},
  author={Efimov, A. I.},
  journal={Selecta Mathematica},
  volume={24},
  pages={3753--3762},
  year={2018},
  publisher={Springer}
}

@misc{MM24,
      title={Obstruction classes for moduli spaces of sheaves and Lagrangian fibrations}, 
      author={Mattei, D., and Meinsma, R.},
      year={2024},
      eprint={2404.16652},
      archivePrefix={arXiv},
      primaryClass={math.AG}
}

@article{Bor17,
    title = {The class of the affine line is a zero divisor in the Grothendieck ring},
    author = {L. A. Borisov},
    journal = {J. Algebraic Geom.},
    volume = {27},
    pages = {203--209},
    year = {2017}
}

@article{BCP2020,
url = {https://doi.org/10.1515/crelle-2018-0014},
title = {Intersections of two Grassmannians in $\mathbb{P}^9$},
author = {Lev A. Borisov and Andrei Căldăraru and Alexander Perry},
pages = {133--162},
volume = {2020},
number = {760},
journal = {Journal für die reine und angewandte Mathematik (Crelles Journal)},
doi = {doi:10.1515/crelle-2018-0014},
year = {2020},
lastchecked = {2025-12-08}
}

@article{IMOU20,
  title={Derived equivalence and Grothendieck ring of varieties: the case of K3 surfaces of degree 12 and abelian varieties},
  author={A. Ito and M. Miura and S. Okawa and K. Ueda},
  journal={Selecta Mathematica},
  year={2020},
  volume={26},
}

@misc{OSCAR,
  key          = {OSCAR},
  organization = {The OSCAR Team},
  title        = {O{SCAR} -- {O}pen {S}ource {C}omputer {A}lgebra {R}esearch system, {V}ersion 1.6.0},
  year         = {2025},
  url          = {https://www.oscar-system.org},
}

@book{OSCAR-book,
  editor = {Decker, Wolfram and Eder, Christian and Fieker, Claus and Horn, Max and Joswig, Michael},
  title = {The {C}omputer {A}lgebra {S}ystem {OSCAR}: {A}lgorithms and {E}xamples},
  year = {2025},
  publisher = {Springer},
  series = {Algorithms and {C}omputation in {M}athematics},
  volume = {32},
  edition = {1},
  url = {https://link.springer.com/book/9783031621260},
  issn = {1431-1550},
  doi = {10.1007/978-3-031-62127-7},
}

@article {KR2019,
    AUTHOR = {Kapustka, Micha\l{} and Rampazzo, Marco},
     TITLE = {Torelli problem for {C}alabi-{Y}au threefolds with {GLSM}
              description},
   JOURNAL = {Commun. Number Theory Phys.},
  FJOURNAL = {Communications in Number Theory and Physics},
    VOLUME = {13},
      YEAR = {2019},
    NUMBER = {4},
     PAGES = {725--761},
      ISSN = {1931-4523,1931-4531},
   MRCLASS = {14F08 (14J32 81T30)},
  MRNUMBER = {4043786},
MRREVIEWER = {Yang-Hui\ He},
       DOI = {10.4310/CNTP.2019.v13.n4.a2},
       URL = {https://doi.org/10.4310/CNTP.2019.v13.n4.a2},
}

@article{HL18, 
    title={Cremona transformations and derived equivalences of K3 surfaces}, 
    volume={154}, 
    number={7}, 
    journal={Compositio Mathematica}, 
    author={Hassett, Brendan and Lai, Kuan-Wen}, 
    year={2018}, 
    pages={1508–1533}
}

@article{KKM20,
author = {G. Kapustka and M. Kapustka and R. Moschetti},
title = {{Equivalence of K3 surfaces from Verra threefolds}},
volume = {60},
journal = {Kyoto Journal of Mathematics},
number = {4},
publisher = {Duke University Press},
pages = {1209 -- 1226},
year = {2020},
}

@article{Oka21,
  author = {S. Okawa},
  title = {An example of birationally inequivalent projective symplectic varieties which are D-equivalent and L-equivalent},
  journal = {Mathematische Zeitschrift},
  year = {2021},
  volume = {297},
  number = {1},
  pages = {459--464},
}

@article {Voisin1986,
    AUTHOR = {Voisin, Claire},
     TITLE = {Th\'eor\`eme de {T}orelli pour les cubiques de {${\bf P}^5$}},
   JOURNAL = {Invent. Math.},
  FJOURNAL = {Inventiones Mathematicae},
    VOLUME = {86},
      YEAR = {1986},
    NUMBER = {3},
     PAGES = {577--601},
      ISSN = {0020-9910,1432-1297},
   MRCLASS = {14C30 (14J35)},
  MRNUMBER = {860684},
MRREVIEWER = {Fabio\ Bardelli},
       DOI = {10.1007/BF01389270},
       URL = {https://doi.org/10.1007/BF01389270},
}

@article {Man2019,
    AUTHOR = {Manivel, Laurent},
     TITLE = {Double spinor {C}alabi-{Y}au varieties},
   JOURNAL = {\'Epijournal G\'eom. Alg\'ebrique},
  FJOURNAL = {\'Epijournal de G\'eom\'etrie Alg\'ebrique. EPIGA},
    VOLUME = {3},
      YEAR = {2019},
     PAGES = {Art. 2, 14},
      ISSN = {2491-6765},
   MRCLASS = {14J32 (14D06 14F05)},
  MRNUMBER = {3936623},
MRREVIEWER = {Ruifang\ Song},
       DOI = {10.46298/epiga.2019.volume3.3965},
       URL = {https://doi.org/10.46298/epiga.2019.volume3.3965},
}

@misc{PMS2025,
      title={Derived categories of Gushel-Mukai surfaces and Fano fourfolds of K3 type}, 
      author={Yulieth Prieto-Montañez and Ian Selvaggi},
      year={2025},
      eprint={2511.20324},
      archivePrefix={arXiv},
      primaryClass={math.AG},
      url={https://arxiv.org/abs/2511.20324}, 
}

@misc{ESS2025,
      title={Atomic decompositions for derived categories of G-surfaces}, 
      author={Alexey Elagin and Julia Schneider and Evgeny Shinder},
      year={2025},
      eprint={2512.05064},
      archivePrefix={arXiv},
      primaryClass={math.AG},
      url={https://arxiv.org/abs/2512.05064}, 
}

@article{Zarhin1983,
author = {Zarhin, Yu.G.},
journal = {Journal für die reine und angewandte Mathematik},
keywords = {K3 surfaces; second rational cohomology group; Hodge group; lattice of transcendental cycles},
pages = {193-220},
title = {Hodge groups of K3 surfaces.},
url = {http://eudml.org/doc/152536},
volume = {341},
year = {1983},
}

@misc{HLNotes2024,
      title={The noncommutative minimal model program}, 
      author={Daniel Halpern-Leistner},
      year={2024},
      eprint={2301.13168},
      archivePrefix={arXiv},
      primaryClass={math.AG},
      url={https://arxiv.org/abs/2301.13168}, 
}

@article {Huy1999,
    AUTHOR = {Huybrechts, Daniel},
     TITLE = {Compact hyper-{K}\"ahler manifolds: basic results},
   JOURNAL = {Invent. Math.},
  FJOURNAL = {Inventiones Mathematicae},
    VOLUME = {135},
      YEAR = {1999},
    NUMBER = {1},
     PAGES = {63--113},
      ISSN = {0020-9910,1432-1297},
   MRCLASS = {32J27 (14J40 32G20 32Q15 53C26)},
  MRNUMBER = {1664696},
MRREVIEWER = {I.\ Dolgachev},
       DOI = {10.1007/s002220050280},
       URL = {https://doi.org/10.1007/s002220050280},
}

@misc{Mei24,
      title={On L-equivalence for K3 surfaces and hyperk\"ahler manifolds}, 
      author={Meinsma, R.},
      year={2024},
      eprint={2408.17203},
      archivePrefix={arXiv},
      primaryClass={math.AG},
%      url={https://arxiv.org/abs/2408.17203}, 
}

@article{NS19,
  title={The motivic nearby fiber and degeneration of stable rationality},
  author={Nicaise, Johannes and Shinder, Evgeny},
  journal={Inventiones mathematicae},
  volume={217},
  pages={377--413},
  year={2019},
  publisher={Springer}
}

@article {IMOU2019,
    AUTHOR = {Ito, Atsushi and Miura, Makoto and Okawa, Shinnosuke and Ueda,
              Kazushi},
     TITLE = {The class of the affine line is a zero divisor in the
              {G}rothendieck ring: via {$G_2$}-{G}rassmannians},
   JOURNAL = {J. Algebraic Geom.},
  FJOURNAL = {Journal of Algebraic Geometry},
    VOLUME = {28},
      YEAR = {2019},
    NUMBER = {2},
     PAGES = {245--250},
      ISSN = {1056-3911,1534-7486},
   MRCLASS = {14C35 (14M15)},
  MRNUMBER = {3912058},
MRREVIEWER = {Paolo\ Aluffi},
       DOI = {10.1090/jag/731},
       URL = {https://doi.org/10.1090/jag/731},
}

@misc{Hart2025,
      title={A note on cubic fourfolds containing multiple planes}, 
      author={Moritz Hartlieb},
      year={2025},
      eprint={2509.06666},
      archivePrefix={arXiv},
      primaryClass={math.AG},
      url={https://arxiv.org/abs/2509.06666}, 
}

@misc{BB2025,
      title={Some remarks on L-equivalence for cubic fourfolds and hyper-K\"ahler manifolds}, 
      author={Simone Billi and Lucas Li Bassi},
      year={2025},
      eprint={2512.01968},
      archivePrefix={arXiv},
      primaryClass={math.AG},
      url={https://arxiv.org/abs/2512.01968}, 
}

@article {Mos2018,
    AUTHOR = {Moschetti, Riccardo},
     TITLE = {The derived category of a non generic cubic fourfold
              containing a plane},
   JOURNAL = {Math. Res. Lett.},
  FJOURNAL = {Mathematical Research Letters},
    VOLUME = {25},
      YEAR = {2018},
    NUMBER = {5},
     PAGES = {1525--1545},
      ISSN = {1073-2780,1945-001X},
   MRCLASS = {14F05 (14J35 18E30)},
  MRNUMBER = {3917738},
MRREVIEWER = {Jing\ Zhang},
       DOI = {10.4310/MRL.2018.v25.n5.a8},
       URL = {https://doi.org/10.4310/MRL.2018.v25.n5.a8},
}

@incollection {Kuz2010,
    AUTHOR = {Kuznetsov, Alexander},
     TITLE = {Derived categories of cubic fourfolds},
 BOOKTITLE = {Cohomological and geometric approaches to rationality
              problems},
    SERIES = {Progr. Math.},
    VOLUME = {282},
     PAGES = {219--243},
 PUBLISHER = {Birkh\"auser Boston, Boston, MA},
      YEAR = {2010},
      ISBN = {978-0-8176-4933-3},
   MRCLASS = {14F05 (14E05)},
  MRNUMBER = {2605171},
MRREVIEWER = {Paolo\ Stellari},
       DOI = {10.1007/978-0-8176-4934-0\_9},
       URL = {https://doi.org/10.1007/978-0-8176-4934-0_9},
}

@misc{Mei25,
      title={Counterexamples to the Kuznetsov--Shinder L-equivalence conjecture}, 
      author={Reinder Meinsma},
      year={2025},
      eprint={2503.21511},
      archivePrefix={arXiv},
      primaryClass={math.AG},
      url={https://arxiv.org/abs/2503.21511}, 
}

@article {PertFMPartners21,
    AUTHOR = {Pertusi, Laura},
     TITLE = {Fourier-{M}ukai partners for very general special cubic
              fourfolds},
   JOURNAL = {Math. Res. Lett.},
  FJOURNAL = {Mathematical Research Letters},
    VOLUME = {28},
      YEAR = {2021},
    NUMBER = {1},
     PAGES = {213--243},
      ISSN = {1073-2780,1945-001X},
   MRCLASS = {14F08 (14J28 14J35)},
  MRNUMBER = {4248001},
MRREVIEWER = {Chen\ Jiang},
       DOI = {10.4310/MRL.2021.v28.n1.a9},
       URL = {https://doi.org/10.4310/MRL.2021.v28.n1.a9},
}

@misc{BBM25,
      title={Counting Fourier-Mukai partners of cubic fourfolds}, 
      author={Christian Böhning and Hans-Christian Graf von Bothmer and Lisa Marquand},
      year={2025},
      eprint={2509.22491},
      archivePrefix={arXiv},
      primaryClass={math.AG},
      url={https://arxiv.org/abs/2509.22491}, 
}

@article{BO01, 
title={Reconstruction of a Variety from the Derived Category and Groups of Autoequivalences}, 
volume={125},
DOI={10.1023/A:1002470302976}, 
number={3}, 
journal={Compositio Mathematica}, 
author={Bondal, Alexei and Orlov, Dmitri}, 
year={2001}, 
pages={327–344}
}

@misc{GS14,
      title={The Fano variety of lines and rationality problem for a cubic hypersurface}, 
      author={Sergey Galkin and Evgeny Shinder},
      year={2014},
      eprint={1405.5154},
      archivePrefix={arXiv},
      primaryClass={math.AG},
      url={https://arxiv.org/abs/1405.5154}, 
}

@misc{BFM24,
      title={Cubic fourfolds with birational Fano varieties of lines}, 
      author={Corey Brooke and Sarah Frei and Lisa Marquand},
      year={2024},
      eprint={2410.22259},
      archivePrefix={arXiv},
      primaryClass={math.AG},
      url={https://arxiv.org/abs/2410.22259}, 
}

@misc{KS25,
      title={The Fano of lines, the Kuznetsov component, and a flop}, 
      author={Kimoi Kemboi and Ed Segal},
      year={2025},
      eprint={2506.20559},
      archivePrefix={arXiv},
      primaryClass={math.AG},
      url={https://arxiv.org/abs/2506.20559}, 
}

@article{MSYZ25,
  author  = {Maulik, Davesh and Shen, Junliang and Yin, Qizheng and Zhang, Ruxuan},
  title   = {The $D$-equivalence conjecture for hyper-K{\"a}hler varieties via hyperholomorphic bundles},
  journal = {Inventiones mathematicae},
  year    = {2025},
  volume  = {241},
  number  = {1},
  pages   = {309--324},
  doi     = {10.1007/s00222-025-01339-8},
  url     = {https://doi.org/10.1007/s00222-025-01339-8},
  issn    = {1432-1297}
}

@article{FL23,
  author    = {Yu-Wei Fan and Kuan-Wen Lai},
  title     = {New rational cubic fourfolds arising from Cremona transformations},
  journal   = {Algebraic Geometry},
  volume    = {10},
  number    = {4},
  pages     = {432--460},
  year      = {2023},
  doi       = {10.14231/AG-2023-014}
}
\end{document}